\numberwithin{equation}{section}
\theoremstyle{plain}
\newtheorem{prop}{Proposition}[section]
\newtheorem{thm}[prop]{Theorem}
\newtheorem{cor}[prop]{Corollary}
\newtheorem{lem}[prop]{Lemma}
\newtheorem{dfn}[prop]{Definition}
\theoremstyle{definition}
\newtheorem{example}[prop]{Example}
\newtheorem{remark}[prop]{Remark}
\newtheorem{remarks}[prop]{Remarks}
\newcommand{\lam}{\lambda}
\DeclareMathOperator{\rad}{rad}
\begin{document}

\title[Semi-simplicity of Temperley-Lieb Algebras of type D]
{Semi-simplicity of Temperley-Lieb Algebras of type D}

\thanks {Corresponding Author: Xiaolin Shi}
\thanks {Li is supported by the Natural Science Foundation of Hebei
Province, China (A2017501003) and NSFC 11871107.}

\author{Yanbo Li}

\address{Li: School of Mathematics and Statistics, Northeastern
University at Qinhuangdao, Qinhuangdao, 066004, P.R. China}

\email{liyanbo707@163.com}

\author{Xiaolin Shi}

\address{Shi: Department of mathematics, School of Science, Northeastern
University, Shenyang, 110819, P.R. China}

\email{shixiaolin0217@163.com}

\begin{abstract}
We provide a necessary and sufficient condition for a type D Temperley-Lieb algebra ${\rm TLD}_n(\delta)$
being semi-simple by studying branching rule for cell modules. As a byproduct, our result is used
to study the so-called forked Temperley-Lieb algebra, which is a quotient algebra of ${\rm TLD}_n(\delta)$.
\end{abstract}

\subjclass[2010]{16G30, 16K20, 81R05}

\keywords{(Forked) Temperley-Lieb algebra; branching rule; cellular algebra; semi-simplicity; Gram matrix.}

\maketitle

\section{Introduction}

Temperley-Lieb algebras were introduced by Temperley and Lieb \cite{TL} in 1971
in order to study the ice model and Potts model in
statistical physics. In 1983, Jones \cite{J} found these algebras again when
he studied the factor theory. Later he \cite{J2} computed the Jones polynomial of a
knot by using the Markov trace of the Temperley-Lieb algebra. Temperley-Lieb algebras play a
more and more important role in various areas. For more details, we refer the reader to \cite{A1, BK}
and the references therein, in which a Temperley-Lieb algebra is connected
to basic ideas in logic and computation, quantum mechanics and other subjects.

\smallskip

From an algebraic point of view, a Temperley-Lieb algebra is a type A Hecke algebra quotient. Along this direction,
a canonical quotient of a Hecke algebra of arbitrary finite type, which is called
a ``generalized Temperley-Lieb algebra", was introduced by Graham in his
PhD thesis \cite{G1} and independently by Fan in \cite{F} and Dieck in \cite{D}.
There are several other generalizations of the classical Temperley-Lieb algebras, such as
cyclotomic Temperley-Lieb algebras \cite{RX, RXY}, Motzkin algebras \cite{BH}, partition algebras \cite{M, X} and so on.
It is helpful to point out that these algebras are all cellular  in the sense of \cite{GL}.
Note that the theory of cellular algebras provides a systematic framework for studying
the representation theory of many important algebras, such as Hecke algebras of finite type,
Brauer algebras, Birman-Wenzl algebras and so on. We refer the reader to \cite{G, GL, X2} for details.

\smallskip

Temperley-Lieb algebras of type A have been studied deeply. For example, the quasi-heredity, semi-simplicity,
dimensions of simple modules, Jucys-Murphy elements, Grothendieck group, categorification, block theory,
Kazhdan-Lusztig basis and Murphy basis and all kinds of other related topics.
We refer the reader to \cite{A, E, F, M2, S, W, W2} for details.
Recently there has been some interesting work surrounding type D Temperley-Lieb algebras.
For example, Losonczy \cite{L2} studied the relation between its canonical basis and the Kazhdan-Lusztig basis.
Lejczyk and Stroppel \cite{LS} using these algebras described parabolic Kazhdan-Lusztig polynomials of the Weyl group
of type D in a diagrammatical way introduced by Green in \cite{G2}.
Moreover, Ehrig and Stroppel in \cite{ES} and Stroppel and Wilbert in \cite{SW} constructed
certain irreducible representations of the Temperley-Lieb algebra of type D using the cohomology of Springer fibers.

\smallskip

In this paper, we will focus on the type D Temperley-Lieb algebras employing the diagrammatic approach and the cellular framework.
More exactly, we will study the recurrence relation of Gram matrices of cell modules mainly
depending on analyzing the so-called decorated parenthesis diagrams.
As a result, we can provide a criterion of semi-simplicity for a type D Temperley-Lieb algebra
${\rm TLD}_n(\delta)$. As a byproduct, we give a criterion of semi-simplicity for a
quotient algebra of ${\rm TLD}_n(\delta)$, which is the so-called forked Temperley-Lieb algebra.
This class of algebras were introduced by Grossman in \cite{G3} in order to study intermediate subfactors.

\smallskip

The paper is organized as follows. We begin with a quick review on the general theory of cellular algebras. In Section 3,
we recall the ring theoretic definition and the diagrammatic definition of a Temperley-Lieb algebra of type D, ${\rm TLD}_n(\delta)$.
In Section 4, we generalize the parenthesis diagrams to decorated ones and give some necessary
investigation on them, especially about an order and some mappings related to it.
In Section 5, using the decorated parenthesis diagrams, we construct a cellular basis and then study branching rule for cell modules.
In Section 6, we first prove the recurrence relation on the determinants of Gram matrices of cell modules and then
deduce a necessary and sufficient condition for ${\rm TLD}_n(\delta)$ being semi-simple.
In Section 7, as a byproduct, we
study a certain quotient algebra of ${\rm TLD}_n(\delta)$, the forked Temperley-Lieb algebra, by using the main result obtained in Section 6.

\medskip

\section{Preliminaries on cellular algebra}

In this section, we give a quick review on the definition and
well-known results about a cellular algebra. The main reference is \cite{GL}.

\begin{dfn}[\protect{\cite[Definition~1.1]{GL}}]\label{2.1}
Let $R$ be a commutative ring with identity. An associative unital
$R$-algebra $A$ is called a cellular algebra with cell datum $(\Lambda, M, C, \ast)$ if the
following conditions are satisfied:

\begin{enumerate}
\item[(C1)] The finite set $\Lambda$ is a poset with
order relation $<$. Associated with
each $\lam\in\Lambda$, there is a finite set $M(\lam)$. The
algebra $A$ has an $R$-basis $\{C_{S,T}^\lam \mid S,T\in
M(\lam),\lam\in\Lambda\}$.

\item[(C2)] The map $\ast$ is an $R$-linear anti-automorphism of $A$
such that $(C_{S,T}^\lam)^{\ast}= C_{T,S}^\lam $ for
all $\lam\in\Lambda$ and $S,T\in M(\lam)$.

\item[(C3)] Let $\lam\in\Lambda$ and $S,T\in M(\lam)$. For any
element
$a\in A$, we have
$$aC_{S,T}^\lam\equiv\sum_{S^{'}\in
M(\lam)}r_{a}(S',S)C_{S^{'},T}^{\lam} \,\,\,\,\rm {mod}\,\,\,
A(<\lam),$$ where $r_{a}(S^{'},S)\in R$ is independent of $T$ and
$A(<\lam)$ is the $R$-submodule of $A$ generated by
$\{C_{U,V}^\mu \mid U,\,\,V\in M(\mu),\,\,\mu<\lam\}$.
\end{enumerate}
\end{dfn}

Let $\lam\in\Lambda$. For arbitrary elements $S,T,U,V\in M(\lam)$,
Definition~\ref{2.1} implies that
$$C_{S,T}^\lam C_{U,V}^\lam \equiv\Phi(T,U)C_{S,V}^\lam\,\,\,\, \rm mod\,\,\, A(<\lam),$$
where $\Phi(T,U)\in R$ depends only on $T$ and $U$. It is easy to check that $\Phi(T,U)=\Phi(U,T)$
for arbitrary $T,U\in M(\lam)$. For $\lam\in \Lambda$, fix an order
on $M(\lam)$. The associated Gram matrix $G(\lam)$ is the following symmetric matrix
$$G(\lam)=(\Phi(S, T))_{S,T\in M(\lam)}.$$
 Note that $\det G(\lam)$, the determinant of $G(\lam)$, is independent of the choice of the order on
$M(\lam)$.

\smallskip

Given a cellular algebra $A$, we note that $A$ has a family of modules defined by its cellular structure.

\begin{dfn}[\protect{\cite[Definition 2.1]{GL}}]\label{2.2}
Let $A$ be a cellular algebra with cell datum $(\Lambda, M, C,
\ast)$. For each $\lam\in\Lambda$, the cell module $W(\lam)$ is an $R$-module with basis
$\{C_{S}\mid S\in M(\lam)\}$ and the left $A$-action
defined by
$$aC_{S}=\sum_{S^{'}\in M(\lam)}r_{a}(S^{'},S)C_{S^{'}}
\,\,\,\,(a\in A,\,\,S\in M(\lam)),$$ where $r_{a}(S^{'},S)$ is the
element of $R$ defined in Definition~\ref{2.1}{\rm(C3)}.
\end{dfn}

For $\lam\in\Lambda$, the bilinear form $\Phi_{\lam}$ defined below plays an important role for
studying the structure of $W(\lam)$.

\begin{dfn}\label{bf}
For a cell module $W(\lam)$, define a bilinear form $$\Phi
_{\lam}:\,\,W(\lam)\times W(\lam)\longrightarrow R$$ by $\Phi
_{\lam}(C_{S},C_{T})=\Phi(S,T)$.
\end{dfn}

The following simple property of $\Phi_{\lam}$ was provided in \cite{GL}.
\begin{lem}\cite[Proposition 2.4]{GL}\label{bl}
Let $x, y\in W(\lam)$ and $a\in A$. Then $$\Phi_{\lam}(a^\ast x, y)=\Phi_{\lam}(x, ay).$$
\end{lem}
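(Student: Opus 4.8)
The plan is to realize the pairing $\Phi_{\lam}$ as a ``coefficient extraction'' from a triple product of cellular basis elements and then exploit associativity of multiplication in $A$.

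First I would record two preliminary observations. (i) $A(<\lam)$ is a two-sided ideal of $A$: it is a left ideal directly from (C3) (for $\mu<\lam$ the element $aC_{U,V}^{\mu}$ lies in the $R$-span of basis elements indexed by $\mu'\le\mu<\lam$), and applying the anti-automorphism $\ast$ --- which preserves $A(<\lam)$ since $(C_{U,V}^{\mu})^{\ast}=C_{V,U}^{\mu}$ --- shows it is also a right ideal. (ii) Applying $\ast$ to the congruence in (C3), using $(C_{S,T}^{\lam})^{\ast}=C_{T,S}^{\lam}$ and $(ab)^{\ast}=b^{\ast}a^{\ast}$, and renaming $a^{\ast}$ back to $a$, yields the ``right-hand'' version
$$C_{U,S}^{\lam}\,a\equiv\sum_{S'\in M(\lam)}r_{a^{\ast}}(S',S)\,C_{U,S'}^{\lam}\pmod{A(<\lam)}$$
valid for all $U,S\in M(\lam)$ and all $a\in A$.

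Next, fix any $U,V\in M(\lam)$ (the set $M(\lam)$ is nonempty) and, for $S,T\in M(\lam)$ and $a\in A$, consider the product $C_{U,S}^{\lam}\,a\,C_{T,V}^{\lam}$ modulo $A(<\lam)$, which makes sense by observation (i). On the one hand, expanding $aC_{T,V}^{\lam}$ by (C3) and then using $C_{U,S}^{\lam}C_{T',V}^{\lam}\equiv\Phi(S,T')C_{U,V}^{\lam}$ shows this product is congruent to $\bigl(\sum_{T'}r_{a}(T',T)\Phi(S,T')\bigr)C_{U,V}^{\lam}$, whose coefficient of $C_{U,V}^{\lam}$ equals $\Phi_{\lam}(C_{S},aC_{T})$. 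On the other hand, expanding $C_{U,S}^{\lam}a$ by observation (ii) and then using $C_{U,S'}^{\lam}C_{T,V}^{\lam}\equiv\Phi(S',T)C_{U,V}^{\lam}$ shows the same product is congruent to $\bigl(\sum_{S'}r_{a^{\ast}}(S',S)\Phi(S',T)\bigr)C_{U,V}^{\lam}$, with coefficient of $C_{U,V}^{\lam}$ equal to $\Phi_{\lam}(a^{\ast}C_{S},C_{T})$. Since the cellular basis elements $\{C_{U,V}^{\lam}\}$ remain $R$-linearly independent modulo $A(<\lam)$, comparing the coefficient of $C_{U,V}^{\lam}$ gives $\Phi_{\lam}(a^{\ast}C_{S},C_{T})=\Phi_{\lam}(C_{S},aC_{T})$ for all $S,T\in M(\lam)$. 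Finally, both sides are $R$-bilinear in $(x,y)$ and $\{C_{S}\}$ is an $R$-basis of $W(\lam)$, so the identity extends from basis elements to arbitrary $x,y\in W(\lam)$.

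There is no essential obstacle here; the only points requiring care are verifying that $A(<\lam)$ is a two-sided ideal so that reduction modulo $A(<\lam)$ is compatible with multiplication on both sides, and correctly tracking how $\ast$ converts the left structure constants $r_{a}(S',S)$ into the right ones $r_{a^{\ast}}(S',S)$.
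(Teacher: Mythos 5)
Your proof is correct: the paper itself gives no argument for this lemma, citing \cite[Proposition~2.4]{GL}, and your computation of $C_{U,S}^{\lambda}\,a\,C_{T,V}^{\lambda}$ modulo $A(<\lambda)$ in two ways (after noting that $A(<\lambda)$ is a two-sided ideal and that $\ast$ converts the left action constants into $r_{a^{\ast}}$) is essentially the standard Graham--Lehrer argument being cited. The only point left tacit is that $\ast$ is an involution, which follows at once from (C2) since $\ast^{2}$ fixes every basis element.
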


Define
$\rad\lam:= \{x\in W(\lam)\mid \Phi_{\lam}(x,y)=0
\,\,\,\text{for all} \,\,\,y\in W(\lam)\}.$ If $\Phi
_{\lam}\neq 0$, then $\rad\lam$ is the radical of the $A$-module
$W(\lam)$.

For $R$ being a field, Graham and Lehrer \cite{GL} proved the following results.
\begin{lem} {\rm\cite[Theorem 3.4]{GL}}\label{2.3}
For any
$\lam\in\Lambda$, denote the $A$-module $W(\lam)/\rad \lam$ by $L_{\lam}$. Let
$\Lambda_{0}=\{\lam\in\Lambda\mid \Phi_{\lam}\neq 0\}$. Then
$\{L_{\lam}\mid \lam\in\Lambda_{0}\}$ is a complete set of
{\rm (}representative of equivalence classes of {\rm )} absolutely simple
$A$-modules.
\end{lem}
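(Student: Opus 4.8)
The plan is to establish this by the standard three-step argument of \cite{GL}: show that each $L_{\lam}$ with $\lam\in\Lambda_{0}$ is absolutely simple, that these exhaust the simple $A$-modules, and that they are pairwise non-isomorphic. The computational engine is the identity $C_{S,T}^{\lam}C_{U,V}^{\lam}\equiv\Phi(T,U)C_{S,V}^{\lam}\pmod{A(<\lam)}$ recorded just after Definition~\ref{2.1}, together with Lemma~\ref{bl}. The first thing I would extract from it is the action of the basis element $C_{S,T}^{\lam}$ on the cell module $W(\lam)$: comparing the identity with Definition~\ref{2.1}(C3) written as $aC_{U,V}^{\lam}\equiv\sum_{U'}r_{a}(U',U)C_{U',V}^{\lam}$ gives, for $a=C_{S,T}^{\lam}$, that $r_{C_{S,T}^{\lam}}(U',U)=\Phi(T,U)\delta_{S,U'}$; hence $C_{S,T}^{\lam}\cdot C_{U}=\Phi(T,U)C_{S}$ and, linearly, $C_{S,T}^{\lam}\cdot x=\Phi_{\lam}(x,C_{T})C_{S}$ for every $x\in W(\lam)$ (using the symmetry of $\Phi_{\lam}$). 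I will write $\mathrm{span}\{C^{\lam}\}$ for the $R$-span of all $C_{S,T}^{\lam}$.

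Almost everything in Step~1 falls out of this identity. First, $A(<\lam)$ annihilates $W(\lam)$: if $a\in A(<\lam)$ then $aC_{S,T}^{\lam}\in A(<\lam)$, which forces all $r_{a}(U',U)=0$. Second, if $x\in W(\lam)\setminus\rad\lam$ then $\Phi_{\lam}(x,C_{T})\neq0$ for some $T$, so $C_{S}\in Ax$ for every $S$, whence $Ax=W(\lam)$; thus $\rad\lam$ is the unique maximal submodule of $W(\lam)$ and $L_{\lam}=W(\lam)/\rad\lam$ is simple (and nonzero, since $\Phi_{\lam}\neq0$). Third, the operators $x\mapsto\Phi_{\lam}(x,C_{T})C_{S}$ descend to operators $\bar\theta_{S,T}$ on $L_{\lam}$; in a basis $\{\overline{C_{T}}\}_{T\in\mathcal B}$ of $L_{\lam}$ (with $\mathcal B\subseteq M(\lam)$) the matrix of $\bar\theta_{S,T}$ for $S,T\in\mathcal B$ has $(U,V)$-entry $\Phi(T,V)\delta_{S,U}$, and since the Gram matrix $(\Phi(T,V))_{T,V\in\mathcal B}$ is invertible — this is precisely non-degeneracy of $\Phi_{\lam}$ on $W(\lam)/\rad\lam$ — the $\bar\theta_{S,T}$ span $\mathrm{End}_{R}(L_{\lam})$. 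So the image of $A$ in $\mathrm{End}_{R}(L_{\lam})$ is the full matrix algebra, which stays irreducible over every extension field; hence $L_{\lam}$ is absolutely simple.

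For Step~2, let $L$ be any simple $A$-module. Since $L=AL\neq0$ and $A=\bigoplus_{\mu}\mathrm{span}\{C^{\mu}\}$, some $\mathrm{span}\{C^{\mu}\}$ acts nonzero on $L$; pick $\lam$ minimal in $\Lambda$ among such $\mu$. Then $A(<\lam)L=0$ (each $\mathrm{span}\{C^{\mu}\}$ with $\mu<\lam$ acts as zero, by minimality), while $A\cdot\mathrm{span}\{C^{\lam}\}\subseteq\mathrm{span}\{C^{\lam}\}+A(<\lam)$ by (C3), so $\mathrm{span}\{C^{\lam}\}L$ is a submodule, nonzero by the choice of $\lam$, hence all of $L$. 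It follows that $\lam\in\Lambda_{0}$: if $\Phi_{\lam}=0$ then $\mathrm{span}\{C^{\lam}\}\cdot\mathrm{span}\{C^{\lam}\}\subseteq A(<\lam)$ by the basic identity, so $L=\mathrm{span}\{C^{\lam}\}\bigl(\mathrm{span}\{C^{\lam}\}L\bigr)\subseteq A(<\lam)L=0$, absurd. Choosing $T$ and $w\in L$ with $C_{S,T}^{\lam}w\neq0$ for some $S$, the map $W(\lam)\to L$, $C_{U}\mapsto C_{U,T}^{\lam}w$, is a nonzero $A$-homomorphism (here one again uses $A(<\lam)L=0$), so $L$ is a quotient of $W(\lam)$ and hence $L\cong L_{\lam}$ by Step~1; this proves completeness. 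For Step~3 I would first prove the refined vanishing statement: for $\lam\in\Lambda_{0}$ and any $\nu\not\geq\lam$, $\mathrm{span}\{C^{\nu}\}$ annihilates $W(\lam)$. When $\nu<\lam$ this is annihilation by $A(<\lam)$; when $\nu$ and $\lam$ are incomparable, the $\ast$-symmetric form of (C3) gives $C_{S,T}^{\nu}C_{U,V}^{\lam}\in\mathrm{span}\{C^{\nu}\}+A(<\nu)$, and neither this subspace nor $A(<\lam)$ has any component along the basis elements $C_{U',V}^{\lam}$ (since $\lam\neq\nu$, $\lam\not<\nu$, $\lam\not<\lam$), so the relation $C_{S,T}^{\nu}C_{U,V}^{\lam}\equiv\sum_{U'}r_{C_{S,T}^{\nu}}(U',U)C_{U',V}^{\lam}\pmod{A(<\lam)}$ forces all these coefficients to vanish. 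Granting this: $\mathrm{span}\{C^{\lam}\}$ acts on $W(\lam)$, and so on $L_{\lam}$, with image everything (take $\Phi(T,U)\neq0$ in $C_{S,T}^{\lam}C_{U}=\Phi(T,U)C_{S}$), so $\mathrm{span}\{C^{\lam}\}L_{\lam}\neq0$; if $L_{\lam}\cong L_{\mu}$ with $\lam,\mu\in\Lambda_{0}$ then $\mathrm{span}\{C^{\lam}\}L_{\mu}\neq0$, forcing $\lam\geq\mu$ by the vanishing statement, and symmetrically $\mu\geq\lam$, whence $\lam=\mu$.

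The main obstacle is the refined vanishing statement underpinning Step~3 in the incomparable case: it is the only ingredient not confined to a single cell, and it requires locating a product $C_{S,T}^{\nu}C_{U,V}^{\lam}$ relative to the whole cellular basis, i.e.\ using the cellular stratification of $A$ rather than one cell at a time. The only other point needing care is that Step~1 genuinely yields a full matrix algebra; this rests solely on non-degeneracy of $\Phi_{\lam}$ on $W(\lam)/\rad\lam$, which is immediate from the definition of $\rad\lam$ and is exactly why the hypothesis $\lam\in\Lambda_{0}$ is indispensable.
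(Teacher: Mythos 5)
Your argument is correct, and it is essentially the standard Graham--Lehrer proof: the paper itself gives no proof of this lemma but simply cites \cite[Theorem 3.4]{GL}, and your three steps (full matrix image on $L_{\lam}$ via invertibility of the Gram matrix on $W(\lam)/\rad\lam$, completeness via a minimal $\lam$ with $\mathrm{span}\{C^{\lam}\}L\neq 0$, and distinctness via the vanishing of $\mathrm{span}\{C^{\nu}\}$ on $W(\lam)$ for $\nu\not\geq\lam$) reproduce that source's argument. No gaps of substance; only minor points left implicit (that $A(<\lam)$ is a two-sided ideal via $\ast$, and that $\rad\lam$ is a submodule by Lemma~\ref{bl}) are standard and consistent with what you cite.
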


Note that if $\Lambda=\Lambda_0$, then $A$ is quasi-hereditary in the sense of \cite{CPS}.

\begin{lem}{\rm\cite{GL}}\label{2.4}
The following are equivalent
\begin{enumerate}
\item[(1)]\, The algebra $A$ is semi-simple;
\item[(2)]\, The nonzero cell modules $W(\lam)$ are irreducible and
pairwise inequivalent;
\item[(3)]\, The form $\Phi_{\lam}$ is non-degenerate {\rm (}i.e. $\rad\lam=0${\rm )} for each $\lam\in\Lambda$;
\item[(4)]\, The determinant of Gram matrix $G(\lam)$ is nonzero for each $\lam\in\Lambda$.

\end{enumerate}
\end{lem}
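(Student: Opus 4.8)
The plan is to work over the field $R$ (as in Lemma~\ref{2.3}) and to establish the cycle of implications in the order $(3)\Leftrightarrow(4)$, $(3)\Rightarrow(2)$, $(2)\Rightarrow(3)$ and $(3)\Leftrightarrow(1)$, using the $A$-module structure of $\rad\lam$ together with the completeness statement of Lemma~\ref{2.3}. The equivalence $(3)\Leftrightarrow(4)$ is pure linear algebra: with respect to the basis $\{C_S\mid S\in M(\lam)\}$, the Gram matrix $G(\lam)$ is exactly the matrix of $\Phi_\lam$, so $\rad\lam$ is its null space; over a field this is trivial if and only if $\det G(\lam)\neq 0$.

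Next I would record that $\rad\lam$ is always an $A$-submodule of $W(\lam)$: if $x\in\rad\lam$ and $a\in A$, then $\Phi_\lam(ax,y)=\Phi_\lam(x,a^\ast y)=0$ for every $y\in W(\lam)$ by Lemma~\ref{bl}, so $ax\in\rad\lam$. For $(3)\Rightarrow(2)$: if $\rad\lam=0$ for all $\lam$, then every nonzero $W(\lam)$ has $\Phi_\lam\neq0$ (else $\rad\lam=W(\lam)\neq0$), so $\lam\in\Lambda_0$ and $W(\lam)=W(\lam)/\rad\lam=L_\lam$; by Lemma~\ref{2.3} these are absolutely simple and pairwise inequivalent, giving (2). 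For $(2)\Rightarrow(3)$: since $\rad\lam$ is a submodule and a nonzero $W(\lam)$ is irreducible, either $\rad\lam=0$ (done) or $\rad\lam=W(\lam)$. In the latter case $\lam\notin\Lambda_0$, yet the simple module $W(\lam)$ must be isomorphic to $L_\mu$ for some $\mu\in\Lambda_0$ by Lemma~\ref{2.3}; then $\mu\neq\lam$, and since $\Phi_\mu\neq0$ and $W(\mu)$ is irreducible by hypothesis we get $\rad\mu=0$, hence $W(\mu)=L_\mu\cong W(\lam)$, contradicting the pairwise inequivalence of the nonzero cell modules. So $\rad\lam=0$ for all $\lam$.

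Finally I would prove $(3)\Leftrightarrow(1)$ by a dimension count. The cellular basis gives $\dim_R A=\sum_{\lam\in\Lambda}|M(\lam)|^2=\sum_{\lam\in\Lambda}(\dim W(\lam))^2$, while, because $\{L_\mu\mid\mu\in\Lambda_0\}$ is by Lemma~\ref{2.3} a complete set of absolutely simple $A$-modules, the Wedderburn decomposition of $A/\rad A$ yields $\dim_R(A/\rad A)=\sum_{\mu\in\Lambda_0}(\dim L_\mu)^2$. If (3) holds, then $W(\lam)=L_\lam$ for every $\lam\in\Lambda_0$ and $W(\lam)=0$ otherwise, so the two sums coincide, $\rad A=0$, and $A$ is semi-simple. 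Conversely, if $A$ is semi-simple then $A=A/\rad A$, so $\sum_{\lam\in\Lambda}(\dim W(\lam))^2=\sum_{\mu\in\Lambda_0}(\dim L_\mu)^2$; since $\dim W(\lam)\geq\dim L_\lam$ for $\lam\in\Lambda_0$ and every term is non-negative, equality forces $W(\lam)=L_\lam$ for $\lam\in\Lambda_0$ and $W(\lam)=0$ for $\lam\notin\Lambda_0$, which is (2), hence (3).

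The main obstacle is $(2)\Rightarrow(3)$: one must rule out the degenerate case $\Phi_\lam=0$ for a nonzero cell module, and this is precisely where the completeness of the list $\{L_\mu\mid\mu\in\Lambda_0\}$ from Lemma~\ref{2.3} and the pairwise-inequivalence clause of (2) are indispensable. By contrast the dimension-count step of $(3)\Leftrightarrow(1)$ is routine once one has $\dim_R A=\sum_\lam|M(\lam)|^2$ and the description of $A/\rad A$ furnished by the absolute irreducibility in Lemma~\ref{2.3}.
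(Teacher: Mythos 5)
Your argument is correct, and there is nothing in the paper to measure it against: the authors do not prove this lemma but quote it from \cite{GL}, so the only benchmark is Graham and Lehrer's original proof. That proof goes through their structure theory of cell modules (composition factors of $W(\lambda)$ are of the form $L_\mu$, with $L_\lambda$ occurring exactly once on top for $\lambda\in\Lambda_0$) together with the filtration of $A$ by cell ideals; your route replaces this machinery by the observation that $\rad\lambda$ is an $A$-submodule (via Lemma~\ref{bl}) plus the Wedderburn dimension count comparing $\dim_R A=\sum_{\lambda}|M(\lambda)|^2$ with $\dim_R(A/\rad A)=\sum_{\mu\in\Lambda_0}(\dim L_\mu)^2$, which is more elementary and yields a self-contained derivation from Lemma~\ref{2.3} alone; the equivalence $(3)\Leftrightarrow(4)$ is indeed just linear algebra over the field. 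Two fine points deserve to be made explicit. First, in $(2)\Rightarrow(3)$ you invoke Lemma~\ref{2.3} to place the simple module $W(\lambda)$ with $\Phi_\lambda=0$ on the list $\{L_\mu\mid\mu\in\Lambda_0\}$; as worded, that lemma classifies the \emph{absolutely} simple modules, so you are tacitly using the fact (true, and part of the cited theorem of \cite{GL}) that every simple module of a cellular algebra over a field is of the form $L_\mu$. Second, the identity $\dim_R(A/\rad A)=\sum_{\mu\in\Lambda_0}(\dim L_\mu)^2$ requires $\operatorname{End}_A(L_\mu)=R$, which you correctly extract from the absolute simplicity guaranteed by Lemma~\ref{2.3}. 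With these two remarks spelled out, your proof is complete and sound.
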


\medskip

\section{Definition of Temperley-Lieb algebras of type D}

In this section, we recall the ring theoretic
definition and the diagrammatic definition of the
Temperley-Lieb algebra of type D. Since we adopt diagrammatic approach to study the algebra,
the focus is the graphical definition. The main references are \cite{F, G1} and \cite{G2}.

\subsection{Ring theoretic definition} In order to give the definition of a
Temperley-Lieb algebra of type D by generators and relations,  we first recall
the Dynkin diagram of type $D_n$.

\begin{figure}[H]
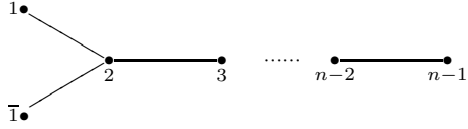

	\[
	\xy
    (1,-7)*{\scriptstyle \overline{1} \bullet}="1s";
    (1,7)*{\scriptstyle 1\bullet}="2s";
    (13,0)*{\scriptstyle\bullet}="3s";
    (13,-2)*{\scriptstyle 2};
    (28,0)*{\scriptstyle\bullet}="4s";
    (28,-2)*{\scriptstyle 3};
    (36,0)*{\scriptstyle\cdots\cdots};
    (43,0)*{\scriptstyle\bullet}="5s";
    (43,-2)*{\scriptstyle n-2};
    (58,0)*{\scriptstyle\bullet}="6s";
    (58,-2)*{\scriptstyle n-1};
     "1s"; "3s" **\dir{-};
     "2s"; "3s" **\dir{-};
     "3s"; "4s" **\dir{-};
     "5s"; "6s" **\dir{-};
    \endxy
	\]
    \caption{Dynkin diagram of type D}
	\label{Fig1}
\end{figure}

\begin{dfn}\label{3.1}
Let $R$ be a commutative ring with identity and let $\delta\in R$. For a
positive integer $n \geq 4$, the type $D$ Temperley-Lieb algebra ${\rm TLD}_n(\delta)$ is generated by elements 1 and
$e_{\bar{1}}, e_1, e_2, \cdots, e_{n-1}$ subject to the following relations
\begin{enumerate}
\item[(1)]\,$e_i^2=\delta e_i \,\,\,\,\,\forall\, i=\overline{1}, 1, 2, \cdots, n-1,$
\item[(2)]\,$e_ie_j=e_je_i\,\,\,\, \text{if $i$ and $j$ are not connected in the graph},$
\item[(3)]\,$e_ie_je_i=e_i\,\,\,\, \text{if $i$ and $j$ are connected in the graph}.$
\end{enumerate}
\end{dfn}

A word $w$ is a monomial in the generators $e_{\bar{1}}, e_1, \cdots, e_{n-1}$. It is called reduced if
the number of $e_i$ in the expression is minimal. All of the reduced word form a basis
of ${\rm TLD}_n(\delta)$ and
$$\dim {\rm TLD}_n(\delta)=\frac{n+3}{2(n+1)}\left(\begin{array}{c} 2n \\ n \\ \end{array} \right)-1.$$

\subsection{Diagrammatic definition}

As far as we know, the diagrammatic approach to
classical Temperley-Lieb algebras is due to Kauffman \cite{K}.
The algebra ${\rm TLD}_n(\delta)$ can also be considered as a diagram algebra.
The diagrammatic realization of ${\rm TLD}_n(\delta)$ was first given by Green in \cite{G2}.

\smallskip

Corresponding to the generators $e_i$, the ``diagrammatic generators" are
\begin{figure}[H]
	\[
	\xy
    (-15,0)*{\scriptstyle\circ}="1s";
    (-10,2.5)*{\scriptstyle\bullet};
    (-5,0)*{\scriptstyle\circ}="2s";
    (5,0)*{\scriptstyle\circ}="3s";
    (15,0)*{\scriptstyle\cdots\cdots};
    (25,0)*{\scriptstyle\circ}="4s";
    (35,0)*{\scriptstyle\circ}="5s";
    (-15,10)*{\scriptstyle\circ}="1d";
    (-10,7.3)*{\scriptstyle\bullet};
    (-5,10)*{\scriptstyle\circ}="2d";
    (5,10)*{\scriptstyle\circ}="3d";
    (15,10)*{\scriptstyle\cdots\cdots};
    (25,10)*{\scriptstyle\circ}="4d";
    (35,10)*{\scriptstyle\circ}="5d";
    "1s"; "2s" **\crv{(-14,3.5) & (-6,3.5)};
    "1d"; "2d" **\crv{(-14,6.5) & (-6,6.5)};
    "3s"; "3d" **\dir{-};
    "4s"; "4d" **\dir{-};
    "5s"; "5d" **\dir{-};
    \endxy
    \]
    \caption{$e_{\bar{1}}$}
	\label{Fig2}
\end{figure}
\noindent which can be considered as $e_{\bar{1}}$, and

\begin{figure}[H]
	\[
	\xy
    (-45,0)*{\scriptstyle\circ}="1s";
    (-35,0)*{\scriptstyle\cdots\cdots};
    (-25,0)*{\scriptstyle\circ}="2s";
    (-15,0)*{\scriptstyle\circ}="3s";
    (-15,-2)*{\scriptstyle i};
    (-5,0)*{\scriptstyle\circ}="4s";
    (-5,-2)*{\scriptstyle i+1};
    (5,0)*{\scriptstyle\circ}="5s";
    (15,0)*{\scriptstyle\cdots\cdots};
    (25,0)*{\scriptstyle\circ}="6s";
    (-45,10)*{\scriptstyle\circ}="1d";
    (-35,10)*{\scriptstyle\cdots\cdots};
    (-25,10)*{\scriptstyle\circ}="2d";
    (-15,10)*{\scriptstyle\circ}="3d";
    (-15,13)*{\scriptstyle i};
    (-5,10)*{\scriptstyle\circ}="4d";
    (-5,13)*{\scriptstyle i+1};
    (5,10)*{\scriptstyle\circ}="5d";
    (15,10)*{\scriptstyle\cdots\cdots};
    (25,10)*{\scriptstyle\circ}="6d";
    "1s"; "1d" **\dir{-};
    "2s"; "2d" **\dir{-};
    "3s"; "4s" **\crv{(-14,3.5) & (-6,3.5)};
    "3d"; "4d" **\crv{(-14,6.5) & (-6,6.5)};
    "5s"; "5d" **\dir{-};
    "6s"; "6d" **\dir{-};
    \endxy
    \]
 \caption{$e_i$}
	\label{Fig3}
\end{figure}
\noindent which can be considered as $e_i$ for $i=1, 2, \cdots, n-1$.
Note that we label the dots of both rows from left to right by $1, 2, \cdots, n$, and the identity 1
could be considered as the diagram in which the $i$-th dot in the top row
joins to $i$-th one in the bottom row for all $i=1, 2, \cdots, n$.

The product of  two arbitrary diagrams
is defined to be their concatenation with the rules in Figure \ref{Fig4} on removing circuits and decorations.
\begin{figure}[H]
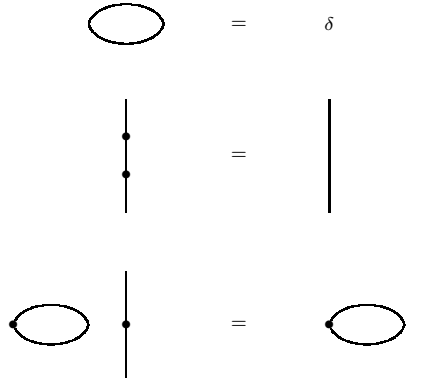

	\[
	\xy
    (-15,0)*{\scriptstyle}="1s";
    (-5,0)*{\scriptstyle}="2s";
    (5,0)*{\scriptstyle =};
    (17,0)*{\scriptstyle \delta};
    "1s"; "2s" **\crv{(-14,-3.5) & (-6,-3.5)};
    "1s"; "2s" **\crv{(-14,3.5) & (-6,3.5)};
    (-10,-10)*{\scriptstyle}="3s";
    (-10,-25)*{\scriptstyle}="4s";
    (-10,-15)*{\scriptstyle\bullet};
    (-10,-20)*{\scriptstyle\bullet};
    (5,-17.5)*{\scriptstyle =};
    (17,-10)*{\scriptstyle}="5s";
    (17,-25)*{\scriptstyle}="6s";
    "3s"; "4s" **\dir{-};
    "5s"; "6s" **\dir{-};
    (-25,-40)*{\scriptstyle\bullet}="7s";
    (-15,-40)*{\scriptstyle}="8s";
    "7s"; "8s" **\crv{(-24,-36.5) & (-16,-36.5)};
    "7s"; "8s" **\crv{(-24,-43.5) & (-16,-43.5)};
    (-10,-33)*{\scriptstyle}="9s";
    (-10,-47)*{\scriptstyle}="10s";
    "9s"; "10s" **\dir{-};
    (-10,-40)*{\scriptstyle\bullet};
    (5,-40)*{\scriptstyle =};
    (17,-40)*{\scriptstyle\bullet}="11s";
    (27,-40)*{\scriptstyle}="12s";
    "11s"; "12s" **\crv{(18,-36.5) & (26,-36.5)};
    "11s"; "12s" **\crv{(18,-43.5) & (26,-43.5)};
    (32,-33)*{\scriptstyle}="13s";
    (32,-47)*{\scriptstyle}="14s";
    "13s"; "14s" **\dir{-};
    \endxy
    \]
    \caption{Removing rule}
	\label{Fig4}
\end{figure}

Note that the first rule in Figure \ref{Fig4} means that a circuit can be replaced by $\delta$.
The second one means that arbitrary two decorations on a curve can be removed together.
The third one means that any decoration can be removed whenever it encounter a decorated circuit.

Consequently, if we ignore a nonzero scalar, then any word in the generators $e_{\bar{1}}, e_1, \cdots, e_{n-1}$
corresponds to a diagram, which will be called from now on a decorated Temperley-Lieb $n$-diagram.

To describe more details about the set of decorated Temperley-Lieb diagrams,
let us recall the definition of a Temperley-Lieb diagram, which
is a ``planar Brauer diagram" (we refer the reader to \cite{J3}).
A Temperley-Lieb diagram consists of two rows of $n$ dots in which each dot is joined to just one other dot with an arc,
and none of arcs intersect in the rectangle defined by the $2n$ dots. An arc
is called to be horizontal if it joins two dots in the same row and said to be vertical otherwise.

Green proved in \cite{G2} the following result, which essentially provides a diagram basis of ${\rm TLD}_n(\delta)$.
\begin{lem}\label{3.3}
There are two types of decorated Temperley-Lieb diagrams.
A diagram of the first type contains a Temperley-Lieb diagram (not the identity) with a decorated circuit
in addition. A diagram of the second type is a Temperley-Lieb diagram with decorations on arcs satisfying the following conditions:
\begin{enumerate}
\item[(1)] contains no circuit;
\item[(2)] there is at most one decoration on each arc;
\item[(3)] the total number of decorations is even;
\item[(4)] any decorated arc have to be exposed to the left boundary of the rectangle defined by the $2n$ dots.
\end{enumerate}
\end{lem}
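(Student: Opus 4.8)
The plan is to start from an arbitrary word $w=e_{i_1}e_{i_2}\cdots e_{i_k}$ in the generators, form its concatenation diagram, simplify it by the removing rules of Figure~\ref{Fig4} until no rule applies, and then read off that the outcome must have one of the two asserted shapes; the converse direction, that every diagram of either type is realized by such a word, is routine and I would dispatch it briefly. Two elementary observations organize the reduction. First, once one forgets the decorations and erases the closed loops, the concatenation of planar $n$-diagrams is again a planar non-crossing $n$-diagram, so the object underneath is always an honest Temperley--Lieb diagram; the loops that appear are pairwise disjoint simple closed curves carrying some decorations, and each decoration may be slid freely along the curve on which it sits. Second, $e_i$ carries no decoration for $i\ge 1$ whereas $e_{\bar{1}}$ carries exactly two, so the concatenation of $w$ has exactly $2\cdot\#\{j : i_j=\bar{1}\}$ decorations, an even number; and every generator has exactly $n-2$ vertical arcs, so, since a vertical arc of a concatenation consumes one vertical arc from each factor while the removing rules leave the arcs untouched, the reduced diagram of any \emph{nonempty} word has at most $n-2$ vertical arcs.

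I would then run the reduction. Using the second rule, cancel decorations on each curve in pairs, so that afterwards every arc and every loop carries at most one decoration; using the first rule, delete every now-undecorated loop at the cost of a factor $\delta$. Two cases remain. \emph{Case 1: at least one decorated loop survives.} One checks, using the third rule, that every decoration still present on an arc can be removed because it ``encounters'' the decorated loop, and that any decorated loop beyond the first loses its decoration in the same way and is then deleted by the first rule; hence $w$ equals a power of $\delta$ times a Temperley--Lieb diagram $d$ together with a single decorated circuit, which is exactly the first type. Finally $d$ is not the identity: a decorated circuit forces $e_{\bar{1}}$ to occur in $w$, so $w$ is nonempty, and by the count above the reduced diagram has at most $n-2<n$ vertical arcs, hence so does $d$ (the decorated loop contributing none), so $d$ cannot be the identity.

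\emph{Case 2: no decorated loop survives.} Then the third rule is never available, so only the first two rules are ever applied, and these do not change the parity of the number of decorations; thus by the count above the final Temperley--Lieb diagram carries an even number of decorations, each on its own arc, which is conditions (1)--(3) of the second type. The one substantive point is condition (4). I would prove the stronger statement that \emph{every decoration lies on an arc bounding the leftmost face of the complement of the diagram in the rectangle}; because a decoration slides freely along its host arc, this is precisely the asserted ``exposed to the left boundary'' property. The proof is by induction on $k$. The base case $w=e_{\bar{1}}$ is immediate, the two decorated cups joining the leftmost dots $1$ and $2$. For the step, concatenating a reduced diagram $D$ having the property with a generator $e_j$, one traces how the arcs of $D$ and of $e_j$ are spliced along the shared row and checks that a decorated arc of $D$ is either untouched or lengthened by arcs of $e_j$ into a longer arc that still bounds the new leftmost face; the delicate case is $e_j=e_{\bar{1}}$, where the newly created leftmost cup on dots $1,2$ is attached to the old decorated arc rather than placed in front of it, so that no decoration ever becomes trapped behind a nested arc.

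I expect this last induction to be the main obstacle: one has to argue carefully that neither the splicing of arcs under concatenation nor the isotopies used to bring the picture to normal form (pulling decorations to the left, pulling loops apart) ever pushes a decoration behind an arc that separates it from the left edge of the rectangle. I would make this precise by fixing, for each reduced diagram, a planar realization in which the left edge of the rectangle is a single boundary arc bounding a distinguished face $F$, and by verifying that, under the gluing that defines the product, the distinguished faces of the two factors merge into the distinguished face of the product, so that a decoration adjacent to $F$ in a factor stays adjacent to $F$ in the product. As an alternative that bypasses the topology, one could grant the basis of reduced words and the dimension formula recalled just after Definition~\ref{3.1}, show that the type-1 and type-2 diagrams span ${\rm TLD}_n(\delta)$, and compare cardinalities; but I would present the direct diagrammatic argument.
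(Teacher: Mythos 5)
The first thing to say is that the paper itself offers no proof of this lemma: it is quoted as a result of Green, with the reference \cite{G2}, so there is no internal argument to compare yours against. Your reduction-of-words strategy (reduce a word by the rules of Figure \ref{Fig4}, classify the normal form, realize both types by words) is the natural reconstruction of Green's decorated-tangle argument, and its skeleton — parity of decorations from counting occurrences of $e_{\bar 1}$, non-increase of through strands to rule out the identity in the first type, and the west-exposure invariant for condition (4) — is the right set of ingredients.

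Two points, however, need repair before this is a proof. First, in your Case 1 you remove every remaining decoration ``because it encounters the decorated loop,'' but the third rule of Figure \ref{Fig4} is a local planar relation: it applies only when the decorated circuit can be isotoped adjacent to the decoration without crossing strands. Justifying that is exactly the exposure invariant you only set up later, inside Case 2, and only for decorated arcs of the final diagram. The invariant has to be formulated for \emph{every} intermediate stage of the reduction and for decorations sitting on loops as well as on arcs (all of them adjacent to the western face of the rectangle), established by your induction on the word length \emph{before} the case split, and then used both to drive the rule-3 cancellations in Case 1 and to deliver condition (4) in Case 2. The inductive step itself is fine in outline — the western faces of the two strips being glued meet in the region to the left of dot $1$, so they merge into the western face of the product and adjacency is preserved, including in the delicate $e_{\bar 1}$ case — but as written the logical order is circular-looking. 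Second, the converse direction is not negligible: the lemma is what licenses the later use of these diagrams as a basis, so you must either exhibit, for each type-1 and type-2 diagram, a word reducing to it (a concrete construction in the style of the cellular basis of Section 5), or carry out the counting argument you mention against the dimension formula $\frac{n+3}{2(n+1)}\binom{2n}{n}-1$; dismissing it as routine leaves the statement ``these are exactly the decorated Temperley--Lieb diagrams'' unproved. With those two adjustments your argument goes through.
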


The diagram in Figure \ref{Fig7} is {\bf not} a decorated Temperley-Lieb diagram,
because it does not satisfy the condition (4) in Lemma \ref{3.3}.
	
\begin{figure}[H]
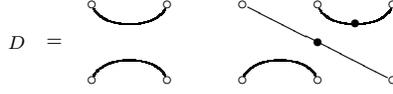

	\[
	\xy
    (-25,5)*{\scriptstyle D};
    (-20,5)*{\scriptstyle =};
    (-15,0)*{\scriptstyle\circ}="1s";
    (20,7.5)*{\scriptstyle\bullet};
    (-5,0)*{\scriptstyle\circ}="2s";
    (5,0)*{\scriptstyle\circ}="3s";
    (15,0)*{\scriptstyle\circ}="4s";
    (25,0)*{\scriptstyle\circ}="5s";
    (-15,10)*{\scriptstyle\circ}="1d";
    (15,5)*{\scriptstyle\bullet};
    (-5,10)*{\scriptstyle\circ}="2d";
    (5,10)*{\scriptstyle\circ}="3d";
    (15,10)*{\scriptstyle\circ}="4d";
    (25,10)*{\scriptstyle\circ}="5d";
    "1s"; "2s" **\crv{(-14,3.5) & (-6,3.5)};
    "1d"; "2d" **\crv{(-14,6.5) & (-6,6.5)};
    "5s"; "3d" **\dir{-};
    "3s"; "4s" **\crv{(6,3.5) & (14,3.5)};
    "4d"; "5d" **\crv{(16,6.5) & (24,6.5)};
    \endxy
    \]
 \caption{Counterexample}
	\label{Fig7}
\end{figure}

Let us illustrate the product of two decorated Temperley-Lieb $n$-diagrams.

\begin{example}\label{3.2}
Take $D_1$ and $D_2$ to be
\begin{figure}[H]
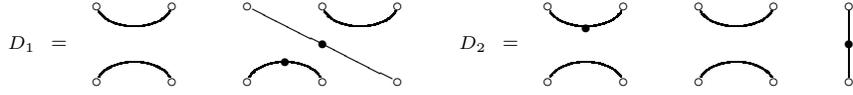

	\[
	\xy
    (-25,5)*{\scriptstyle D_1};
    (-20,5)*{\scriptstyle =};
    (-15,0)*{\scriptstyle\circ}="1s";
    (10,2.5)*{\scriptstyle\bullet};
    (-5,0)*{\scriptstyle\circ}="2s";
    (5,0)*{\scriptstyle\circ}="3s";
    (15,0)*{\scriptstyle\circ}="4s";
    (25,0)*{\scriptstyle\circ}="5s";
    (-15,10)*{\scriptstyle\circ}="1d";
    (15,5)*{\scriptstyle\bullet};
    (-5,10)*{\scriptstyle\circ}="2d";
    (5,10)*{\scriptstyle\circ}="3d";
    (15,10)*{\scriptstyle\circ}="4d";
    (25,10)*{\scriptstyle\circ}="5d";
    "1s"; "2s" **\crv{(-14,3.5) & (-6,3.5)};
    "1d"; "2d" **\crv{(-14,6.5) & (-6,6.5)};
    "5s"; "3d" **\dir{-};
    "3s"; "4s" **\crv{(6,3.5) & (14,3.5)};
    "4d"; "5d" **\crv{(16,6.5) & (24,6.5)};
    (35,5)*{\scriptstyle D_2};
    (40,5)*{\scriptstyle =};
    (45,0)*{\scriptstyle\circ}="6s";
    (50,7)*{\scriptstyle\bullet};
    (55,0)*{\scriptstyle\circ}="7s";
    (65,0)*{\scriptstyle\circ}="8s";
    (75,0)*{\scriptstyle\circ}="9s";
    (85,0)*{\scriptstyle\circ}="10s";
    (45,10)*{\scriptstyle\circ}="6d";
    (85,5)*{\scriptstyle\bullet};
    (55,10)*{\scriptstyle\circ}="7d";
    (65,10)*{\scriptstyle\circ}="8d";
    (75,10)*{\scriptstyle\circ}="9d";
    (85,10)*{\scriptstyle\circ}="10d";
    "6s"; "7s" **\crv{(46,3.5) & (54,3.5)};
    "6d"; "7d" **\crv{(46,6.5) & (54,6.5)};
    "8d"; "9d" **\crv{(66,6.5) & (74,6.5)};
    "8s"; "9s" **\crv{(66,3.5) & (74,3.5)};
    "10s"; "10d" **\dir{-};
    \endxy
    \]
    \caption{Decorated Temperley-Lieb diagrams}
	\label{Fig5}
\end{figure}
\noindent Note that both $D_1$ and $D_2$ in Figure \ref{Fig5} are diagrams of the second type.
Then by using the rule 2, rule 3 and rule 1 of Figure \ref{Fig4} in turn,
we get the diagram of the product, which is of the first type. We illustrate the process as follows.
\begin{figure}[H]
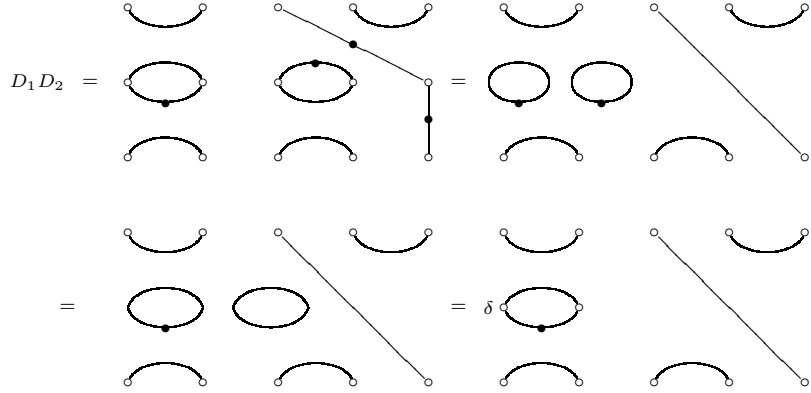

	\[
	\xy
    (-27,0)*{\scriptstyle D_1D_2};
    (-20,0)*{\scriptstyle =};
    (-15,0)*{\scriptstyle\circ}="1s";
    (10,2.5)*{\scriptstyle\bullet};
    (-5,0)*{\scriptstyle\circ}="2s";
    (5,0)*{\scriptstyle\circ}="3s";
    (15,0)*{\scriptstyle\circ}="4s";
    (25,0)*{\scriptstyle\circ}="5s";
    (-15,10)*{\scriptstyle\circ}="1d";
    (15,5)*{\scriptstyle\bullet};
    (-5,10)*{\scriptstyle\circ}="2d";
    (5,10)*{\scriptstyle\circ}="3d";
    (15,10)*{\scriptstyle\circ}="4d";
    (25,10)*{\scriptstyle\circ}="5d";
    (-15,-10)*{\scriptstyle\circ}="1a";
    (-5,-10)*{\scriptstyle\circ}="2a";
    (5,-10)*{\scriptstyle\circ}="3a";
    (15,-10)*{\scriptstyle\circ}="4a";
    (25,-10)*{\scriptstyle\circ}="5a";
    (25,-5)*{\scriptstyle\bullet};
    (-10,-2.8)*{\scriptstyle\bullet};
    "1s"; "2s" **\crv{(-14,3.5) & (-6,3.5)};
    "1s"; "2s" **\crv{(-14,-3.5) & (-6,-3.5)};
    "1d"; "2d" **\crv{(-14,6.5) & (-6,6.5)};
    "5s"; "3d" **\dir{-};
    "3s"; "4s" **\crv{(6,3.5) & (14,3.5)};
    "3s"; "4s" **\crv{(6,-3.5) & (14,-3.5)};
    "4d"; "5d" **\crv{(16,6.5) & (24,6.5)};
    "1a"; "2a" **\crv{(-14,-6.5) & (-6,-6.5)};
    "3a"; "4a" **\crv{(6,-6.5) & (14,-6.5)};
    "5s"; "5a" **\dir{-};
    (29,0)*{\scriptstyle =};
    (33,0)*{\scriptstyle}="1b";
    (41,0)*{\scriptstyle}="2b";
    (44,0)*{\scriptstyle}="3b";
    (52,0)*{\scriptstyle}="4b";
    (35,10)*{\scriptstyle\circ}="1f";
    (45,10)*{\scriptstyle\circ}="2f";
    (55,10)*{\scriptstyle\circ}="3f";
    (65,10)*{\scriptstyle\circ}="4f";
    (75,10)*{\scriptstyle\circ}="5f";
    (35,-10)*{\scriptstyle\circ}="1c";
    (45,-10)*{\scriptstyle\circ}="2c";
    (55,-10)*{\scriptstyle\circ}="3c";
    (65,-10)*{\scriptstyle\circ}="4c";
    (75,-10)*{\scriptstyle\circ}="5c";
    (37,-2.8)*{\scriptstyle\bullet};
    (48,-2.8)*{\scriptstyle\bullet};
    "1b"; "2b" **\crv{(33,3.5) & (41,3.5)};
    "1b"; "2b" **\crv{(33,-3.5) & (41,-3.5)};
    "3b"; "4b" **\crv{(44,3.5) & (52,3.5)};
    "3b"; "4b" **\crv{(44,-3.5) & (52,-3.5)};
    "1f"; "2f" **\crv{(36,6.5) & (44,6.5)};
    "4f"; "5f" **\crv{(66,6.5) & (74,6.5)};
    "1c"; "2c" **\crv{(36,-6.5) & (44,-6.5)};
    "3c"; "4c" **\crv{(56,-6.5) & (64,-6.5)};
    "3f"; "5c" **\dir{-};
    (-23,-30)*{\scriptstyle =};
    (-15,-30)*{\scriptstyle}="1b";
    (-5,-30)*{\scriptstyle}="2b";
    (-1,-30)*{\scriptstyle}="3b";
    (9,-30)*{\scriptstyle}="4b";
    (-15,-20)*{\scriptstyle\circ}="1f";
    (-5,-20)*{\scriptstyle\circ}="2f";
    (5,-20)*{\scriptstyle\circ}="3f";
    (15,-20)*{\scriptstyle\circ}="4f";
    (25,-20)*{\scriptstyle\circ}="5f";
    (-15,-40)*{\scriptstyle\circ}="1c";
    (-5,-40)*{\scriptstyle\circ}="2c";
    (5,-40)*{\scriptstyle\circ}="3c";
    (15,-40)*{\scriptstyle\circ}="4c";
    (25,-40)*{\scriptstyle\circ}="5c";
    (-10,-32.8)*{\scriptstyle\bullet};
    "1b"; "2b" **\crv{(-14,-26.5) & (-6,-26.5)};
    "1b"; "2b" **\crv{(-14,-33.5) & (-6,-33.5)};
    "3b"; "4b" **\crv{(0,-26.5) & (8,-26.5)};
    "3b"; "4b" **\crv{(0,-33.5) & (8,-33.5)};
    "1f"; "2f" **\crv{(-14,-23.5) & (-6,-23.5)};
    "4f"; "5f" **\crv{(16,-23.5) & (24,-23.5)};
    "1c"; "2c" **\crv{(-14,-36.5) & (-6,-36.5)};
    "3c"; "4c" **\crv{(6,-36.5) & (14,-36.5)};
    "3f"; "5c" **\dir{-};
    (29,-30)*{\scriptstyle =};
    (33,-30)*{\scriptstyle \delta};
    (35,-30)*{\scriptstyle\circ}="1b";
    (45,-30)*{\scriptstyle\circ}="2b";
    (35,-20)*{\scriptstyle\circ}="1f";
    (45,-20)*{\scriptstyle\circ}="2f";
    (55,-20)*{\scriptstyle\circ}="3f";
    (65,-20)*{\scriptstyle\circ}="4f";
    (75,-20)*{\scriptstyle\circ}="5f";
    (35,-40)*{\scriptstyle\circ}="1c";
    (45,-40)*{\scriptstyle\circ}="2c";
    (55,-40)*{\scriptstyle\circ}="3c";
    (65,-40)*{\scriptstyle\circ}="4c";
    (75,-40)*{\scriptstyle\circ}="5c";
    (40,-32.8)*{\scriptstyle\bullet};
    "1b"; "2b" **\crv{(36,-26.5) & (44,-26.5)};
    "1b"; "2b" **\crv{(36,-33.5) & (44,-33.5)};
    "1f"; "2f" **\crv{(36,-23.5) & (44,-23.5)};
    "4f"; "5f" **\crv{(66,-23.5) & (74,-23.5)};
    "1c"; "2c" **\crv{(36,-36.5) & (44,-36.5)};
    "3c"; "4c" **\crv{(56,-36.5) & (64,-36.5)};
    "3f"; "5c" **\dir{-};
    \endxy
    \]
    \caption{Product of diagrams}
	\label{Fig6}
\end{figure}
Note that in Figure \ref{Fig6} we can get the third diagram from the first one by using the third rule only.
\end{example}

\medskip

\section{Decorated parenthesis diagrams}

A Temperley-Lieb diagram can be split into a top part and a bottom part and each
part is in fact a parenthesis diagram. In this section, we generalize
the parenthesis diagrams to decorated ones and give some results on them which are needed in the following sections.
In particular, we define an order for the set of decorated parenthesis diagrams and study some mappings related to it.

\subsection{Definition of decorated parenthesis diagrams}
 We first recall the definition of a parenthesis diagram, which can be found in \cite{W2}.
\begin{dfn}\label{4.1}
A parenthesis diagram is an involution $\sigma$ of a totally ordered finite set such that
\begin{enumerate}
\item[(1)]\,there do not exist $i, j$ and $k$ with $i<j<k$, $\sigma(j)=j$ and $\sigma(i)=k$;
\item[(2)]\,there do not exist $i, j, k$ and $l$ with $i<j<k<l$ and $\sigma(i)=k$ and $\sigma(j)=l$.
\end{enumerate}
If the totally ordered set has $n$ elements and there are just $2p$ elements $i$ with $\sigma(i)\neq i$,
then $\sigma$ is an $(n, p)$-parenthesis diagram.
\end{dfn}    	
	
Given a parenthesis diagram $\sigma$, if $\sigma(i)=i$, then $i$ is called an isolated dot.
If $\sigma(i)=j$ with $i\neq j$, then $(i, j)$ is called a pair in $\sigma$.
We write a pair as $(i, j)$ implying $i<j$.
A decoration set $\mathbb{D}$ consists of pairs in $\sigma$ satisfying
\begin{enumerate}
\item[(1)]\, if $(i, j)\in \mathbb{D}$ and $\sigma(k)=k$, then $k>j$;
\item[(2)]\, if $(k, l)\in \mathbb{D}$, then any pair $(i, j)$ with $i<k<l<j$ does not in $\mathbb{D}$.
\end{enumerate}
Each element in $\mathbb{D}$ is called a decoration of $\sigma$.

\begin{dfn}\label{4.2}
A parenthesis diagram $\sigma$ with a decoration set $\mathbb{D}$ is called a
decorated parenthesis diagram and denoted by $(\sigma, \mathbb{D})$.
\end{dfn}

There are various ways to write a parenthesis diagram.
We will choose the way of the so-called ``cup (cap) diagram". Let us illustrate an example here.
\begin{example}\label{4.3}
Take an involution $\sigma=(1, 4)(2, 3)(5)$. Then the ``cup diagram" and ``cap diagram" of it are as follows.
\begin{figure}[H]
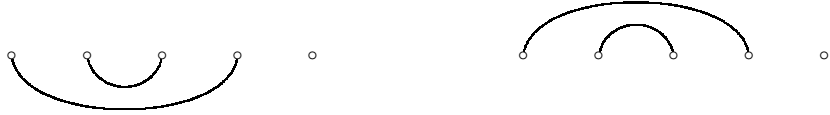

	\[
	\xy
    (-43,0)*{\scriptstyle\circ}="1s";
    (-33,0)*{\scriptstyle\circ}="2s";
    (-23,0)*{\scriptstyle\circ}="3s";
    (-13,0)*{\scriptstyle\circ}="4s";
    (-3,0)*{\scriptstyle\circ}="5s";
    "1s"; "4s" **\crv{(-42,-9.5) & (-14,-9.5)};
    "2s"; "3s" **\crv{(-32,-5.5) & (-24,-5.5)};
    (25,0)*{\scriptstyle\circ}="1d";
    (35,0)*{\scriptstyle\circ}="2d";
    (45,0)*{\scriptstyle\circ}="3d";
    (55,0)*{\scriptstyle\circ}="4d";
    (65,0)*{\scriptstyle\circ}="5d";
    "1d"; "4d" **\crv{(26,9.5) & (54,9.5)};
    "2d"; "3d" **\crv{(36,5.5) & (44,5.5)};
    \endxy
    \]
    \caption{Cup diagram and cap diagram}
	\label{Fig8}
\end{figure}
\end{example}

Clearly, a pair in a parenthesis diagram corresponds to an arc in its cup (cap) diagram. Then
we can write a decorated parenthesis diagram as a decorated cup (cap) diagram, in which
an arc is decorated by a dot if the corresponding pair is in the decoration set.
According to Definition \ref{4.2}, in a decorated cup diagram,
there is not any isolated dot on the left side of a decorated arc.
Let $\sigma$ be as above and the decoration set be (1, 4). Then the corresponding decorated cup diagram is
\begin{figure}[H]
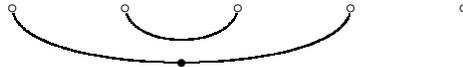

	\[
	\xy
    (-15,0)*{\scriptstyle\circ}="1s";
    (0,0)*{\scriptstyle\circ}="2s";
    (15,0)*{\scriptstyle\circ}="3s";
    (30,0)*{\scriptstyle\circ}="4s";
    (7.5,-7.3)*{\scriptstyle\bullet};
    (45,0)*{\scriptstyle\circ}="5s";
    "1s"; "4s" **\crv{(-14,-9.5) & (29,-9.5)};
    "2s"; "3s" **\crv{(1,-5.5) & (14,-5.5)};
    \endxy
    \]
    \caption{Decorated cup diagram}
	\label{Fig9}
\end{figure}
Denote the set of $(n, p)$-decorated  parenthesis diagrams by $M(n-2p)_n$.
We often omit the subscript $n$ if there is no danger of confusion.
Given a set $X$, the cardinal number of $X$ is denoted by $|X|$.
\begin{lem}\label{4.4}
Let $2p\leq n$. Then
$|M(n-2p)_n|=\left(\begin{array}{c} n \\ p \\ \end{array} \right)$.
\end{lem}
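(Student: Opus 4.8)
The plan is to prove $|M(n-2p)_n|=\binom{n}{p}$ for $2p\le n$ by induction on $n$, analysing the role of the largest dot $n$ in a decorated parenthesis diagram. Write $m(n,p):=|M(n-2p)_n|$. Recall from Definitions~\ref{4.1}--\ref{4.2} that in a decorated parenthesis diagram $(\sigma,\mathbb{D})$ an arc is \emph{decoratable} precisely when it is not nested inside any arc and its right endpoint is smaller than every isolated dot, and that a decoration set $\mathbb{D}$ is simply an arbitrary subset of the set of decoratable arcs. The base cases are immediate: $m(0,0)=1$ and $m(n,0)=\binom{n}{0}=1$ for all $n$, the only $(n,0)$-diagram being the all-isolated one. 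For the inductive step ($p\ge 1$, the case $p=0$ being a base case), fix $(\sigma,\mathbb{D})\in M(n-2p)_n$ and look at the dot $n$: either $n$ is an isolated dot, or $n$ is the right endpoint of an arc $(i,n)$; in the latter case $(i,n)$ is automatically outermost, the dots $i+1,\dots,n-1$ are paired among themselves (Definition~\ref{4.1}(1) forbids an isolated dot strictly between $i$ and $n$, and the non-crossing condition then forces internal pairing), and every isolated dot of $\sigma$ is $<i$.

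Suppose first $2p<n$, so that $\sigma$ has at least one isolated dot. If $n$ is isolated, deleting it is a bijection onto $M((n-1)-2p)_{n-1}$, since removing a dot at the far right changes neither the nesting structure nor the set of decoratable arcs. If $n$ lies on an arc $(i,n)$, then some isolated dot is $<i<n$, so $(i,n)$ is not decoratable and $\mathbb{D}$ only involves arcs among $\{1,\dots,i-1\}$; the operation ``delete the arc $(i,n)$ and the dot $n$, and declare $i$ to be an isolated dot, keeping $\mathbb{D}$ unchanged'' produces a diagram $\tau$ on $\{1,\dots,n-1\}$ with $p-1$ arcs whose largest isolated dot is $i$. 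One checks that the decoratable arcs of $\tau$ are exactly those of $\sigma$ (the arcs formerly nested inside $(i,n)$ stay non-decoratable, now because the new isolated dot $i$ lies to their left, and nothing to the left of $i$ is affected), so $\mathbb{D}$ is also a decoration set for $\tau$; the inverse takes a diagram in $M((n-1)-2(p-1))_{n-1}$, finds its largest isolated dot $i$, appends a new dot $n$ and the arc $(i,n)$, and keeps the decoration set. Hence $m(n,p)=m(n-1,p)+m(n-1,p-1)$, which is Pascal's recurrence, and the inductive hypothesis yields $m(n,p)=\binom{n}{p}$.

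Suppose next $2p=n$. Then $\sigma$ has no isolated dot, $n$ is necessarily the right endpoint of an outermost arc $(i,n)$, and this arc \emph{is} decoratable. Split the diagrams according to whether $(i,n)\in\mathbb{D}$; in each part the map ``delete $(i,n)$ and the dot $n$, make $i$ isolated, and discard $(i,n)$ from $\mathbb{D}$ when present'' is a bijection onto $M(1)_{n-1}=M((n-1)-2(p-1))_{n-1}$, because the decoratable arcs of the resulting diagram are precisely the decoratable arcs of $\sigma$ other than $(i,n)$. Therefore $m(n,n/2)=2\,m(n-1,n/2-1)=2\binom{n-1}{n/2-1}=\binom{n}{n/2}$, the last equality holding since $\binom{n-1}{n/2}=\binom{n-1}{n/2-1}$. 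This completes the induction.

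The step demanding the most care is verifying that the operation ``delete the outermost arc through $n$ and demote its left endpoint to an isolated dot'' is a well-defined bijection that transports decoration sets to decoration sets: concretely, that it leaves the set of decoratable arcs literally unchanged, so that $\mathbb{D}$ (respectively $\mathbb{D}\setminus\{(i,n)\}$) is admissible both before and after. Everything else is routine manipulation with Definitions~\ref{4.1}--\ref{4.2} and Pascal's rule. (Alternatively one could repackage the two recurrences as the Catalan-type identity $\sum_{q\ge 0}C_q\binom{N-2q}{k-q}=\binom{N+1}{k}$ valid for $2k\le N$ and prove it by generating functions or the Catalan convolution, but the bijective argument is cleaner.)
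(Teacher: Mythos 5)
Your proof is correct and is essentially the paper's own argument: the same case analysis on the dot $n$ (isolated, or endpoint of an arc $(i,n)$ which is undecoratable when $2p<n$ and decoratable when $n=2p$) produces the recurrences $d(n,p)=d(n-1,p)+d(n-1,p-1)$ for $0<2p<n$ and $d(2p,p)=2d(2p-1,p-1)$, which you resolve by Pascal's rule, only spelling out in more detail the check that the bijections preserve the set of decoratable arcs. One remark: your per-arc reading of the decoration condition (a decorated arc lies inside no other arc and has no isolated dot to its left, so that $\mathbb{D}$ is an arbitrary subset of such arcs) is the intended one, consistent with Lemma \ref{3.3}(4) and with the listed $(4,2)$- and $(5,2)$-diagrams, even though condition (2) as literally worded would only exclude two nested decorated arcs.
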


\begin{proof}
Denote the number of $(n, p)$-decorated parenthesis diagrams by $d(n, p)$.
We claim that these numbers are determined by the recurrence relation
$$
d(n, p)=\left\{
           \begin{array}{ll}
             d(n-1, p), & \hbox{if $p=0$;} \\
             d(n-1, p)+d(n-1, p-1), & \hbox{if $0<2p<n$;} \\
             2d(n-1, p-1), & \hbox{if $n=2p$.}
           \end{array}
         \right.
$$
In fact, if $p=0$, then $d(n, p)=d(n-1, p)$ is clear. Assume that $0<2p<n$. For an
$(n, p)$-decorated parenthesis diagram $\sigma$ with decoration set $\mathbb{D}$,
which denoted by $(\sigma, \mathbb{D})$, if $n$ is an isolated dot,
then removing $n$ yields an $(n-1, p)$-decorated parenthesis diagram with decoration set $\mathbb{D}$;
if $\sigma(i)=n$ with $i\neq n$ , then there exists an isolated dot $j$ with $j<i$ because $0<2p<n$.
As a result, $(i,n)\notin\mathbb{D}$. Then removing $n$ and letting $i$ to be an isolated dot gives
an $(n-1, p-1)$-decorated parenthesis diagram with decoration set $\mathbb{D}$. This process is invertable clearly.

For the last case, suppose that $\sigma(i)=n$ and $(i, n)\notin \mathbb{D}$, where $\mathbb{D}$ is a set of pairs of $\sigma$. Then
$(\sigma, \mathbb{D})$ and $(\sigma, \mathbb{D}\cup (i, n))$ are two deferent decorated parenthesis diagrams.
However, the $(n-1, p-1)$-decorated parenthesis diagrams obtained from that two diagrams by removing $n$ and leaving $i$ isolated are the same.
Consequently, $d(2p, p)=2d(2p-1, p-1)$. It is easy to check that
$\left(\begin{array}{c}n \\p \\\end{array}\right)$ also satisfy the recurrence relation and this completes the proof.
\end{proof}

Let us define an order on the set of $(n, p)$-decorated parenthesis diagrams.
The order plays a key role in studying the Gram matrices of cell modules.

\begin{dfn}\label{4.5}
Given an $(n, p)$-decorated parenthesis diagram with $t$ decorations, we define a sequence of $2p$ integers
$(i_1, i_2, \cdots, i_p, i_{p+1}, i_{p+2}, \cdots, i_{2p})$ associated with it, where $i_1>i_2>\cdots>i_p$ are
the second numbers of the $p$-pairs respectively, and where $i_{p+1}<\cdots < i_{p+t}$ are the second numbers of
the $t$-decorations respectively and $i_{p+t+1}=\cdots=i_{2p}=\infty$.
Define a total order on the
set $M(n-2p)_n$ by taking the lexicographic order on the sequences given above.
\end{dfn}

\begin{example}\label{4.6}
In the order of Definition \ref{4.5}, the $(5, 2)$-decorated parenthesis diagrams are arranged in Figure \ref{Fig10}.
\begin{figure}[H]
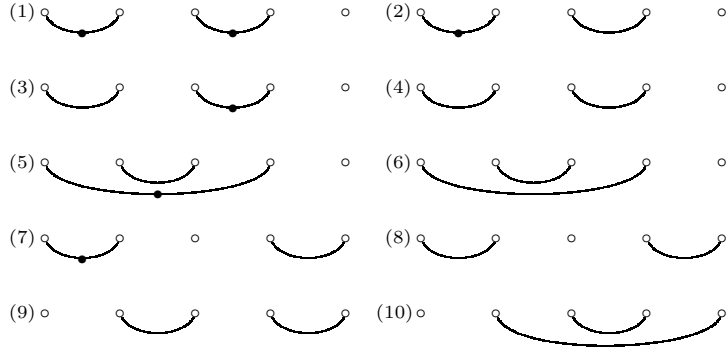

	\[
	\xy
    (-18,0)*{\scriptstyle (1)};
    (-15,0)*{\scriptstyle\circ}="1s";
    (-10,-2.8)*{\scriptstyle\bullet};
    (-5,0)*{\scriptstyle\circ}="2s";
    (5,0)*{\scriptstyle\circ}="3s";
    (10,-2.8)*{\scriptstyle\bullet};
    (15,0)*{\scriptstyle\circ}="4s";
    (25,0)*{\scriptstyle\circ}="5s";
    "1s"; "2s" **\crv{(-14,-3.5) & (-6,-3.5)};
    "3s"; "4s" **\crv{(6,-3.5) & (14,-3.5)};
    (32,0)*{\scriptstyle (2)};
    (35,0)*{\scriptstyle\circ}="6s";
    (40,-2.8)*{\scriptstyle\bullet};
    (45,0)*{\scriptstyle\circ}="7s";
    (55,0)*{\scriptstyle\circ}="8s";
    (65,0)*{\scriptstyle\circ}="9s";
    (75,0)*{\scriptstyle\circ}="10s";
    "6s"; "7s" **\crv{(36,-3.5) & (44,-3.5)};
    "8s"; "9s" **\crv{(56,-3.5) & (64,-3.5)};
    (-18,-10)*{\scriptstyle (3)};
    (-15,-10)*{\scriptstyle\circ}="1d";
    (10,-12.8)*{\scriptstyle\bullet};
    (-5,-10)*{\scriptstyle\circ}="2d";
    (5,-10)*{\scriptstyle\circ}="3d";
    (15,-10)*{\scriptstyle\circ}="4d";
    (25,-10)*{\scriptstyle\circ}="5d";
    "1d"; "2d" **\crv{(-14,-13.5) & (-6,-13.5)};
    "3d"; "4d" **\crv{(6,-13.5) & (14,-13.5)};
    (32,-10)*{\scriptstyle (4)};
    (35,-10)*{\scriptstyle\circ}="6d";
    (45,-10)*{\scriptstyle\circ}="7d";
    (55,-10)*{\scriptstyle\circ}="8d";
    (65,-10)*{\scriptstyle\circ}="9d";
    (75,-10)*{\scriptstyle\circ}="10d";
    "6d"; "7d" **\crv{(36,-13.5) & (44,-13.5)};
    "8d"; "9d" **\crv{(56,-13.5) & (64,-13.5)};
    (-18,-20)*{\scriptstyle (5)};
    (-15,-20)*{\scriptstyle\circ}="1f";
    (-5,-20)*{\scriptstyle\circ}="2f";
    (0,-24.2)*{\scriptstyle\bullet};
    (5,-20)*{\scriptstyle\circ}="3f";
    (15,-20)*{\scriptstyle\circ}="4f";
    (25,-20)*{\scriptstyle\circ}="5f";
    "1f"; "4f" **\crv{(-14,-25.5) & (14,-25.5)};
    "2f"; "3f" **\crv{(-4,-23.5) & (4,-23.5)};
    (32,-20)*{\scriptstyle (6)};
    (35,-20)*{\scriptstyle\circ}="6f";
    (45,-20)*{\scriptstyle\circ}="7f";
    (55,-20)*{\scriptstyle\circ}="8f";
    (65,-20)*{\scriptstyle\circ}="9f";
    (75,-20)*{\scriptstyle\circ}="10f";
    "6f"; "9f" **\crv{(36,-25.5) & (64,-25.5)};
    "7f"; "8f" **\crv{(46,-23.5) & (54,-23.5)};
    (-18,-30)*{\scriptstyle (7)};
    (-15,-30)*{\scriptstyle\circ}="1g";
    (-10,-32.8)*{\scriptstyle\bullet};
    (-5,-30)*{\scriptstyle\circ}="2g";
    (5,-30)*{\scriptstyle\circ}="3g";
    (15,-30)*{\scriptstyle\circ}="4g";
    (25,-30)*{\scriptstyle\circ}="5g";
    "1g"; "2g" **\crv{(-14,-33.5) & (-6,-33.5)};
    "4g"; "5g" **\crv{(16,-33.5) & (24,-33.5)};
    (32,-30)*{\scriptstyle (8)};
    (35,-30)*{\scriptstyle\circ}="6g";
    (45,-30)*{\scriptstyle\circ}="7g";
    (55,-30)*{\scriptstyle\circ}="8g";
    (65,-30)*{\scriptstyle\circ}="9g";
    (75,-30)*{\scriptstyle\circ}="10g";
    "6g"; "7g" **\crv{(36,-33.5) & (44,-33.5)};
    "9g"; "10g" **\crv{(66,-33.5) & (74,-33.5)};
    (-18,-40)*{\scriptstyle (9)};
    (-15,-40)*{\scriptstyle\circ}="1h";
    (-5,-40)*{\scriptstyle\circ}="2h";
    (5,-40)*{\scriptstyle\circ}="3h";
    (15,-40)*{\scriptstyle\circ}="4h";
    (25,-40)*{\scriptstyle\circ}="5h";
    "2h"; "3h" **\crv{(-4,-43.5) & (4,-43.5)};
    "4h"; "5h" **\crv{(16,-43.5) & (24,-43.5)};
    (31.5,-40)*{\scriptstyle (10)};
    (35,-40)*{\scriptstyle\circ}="6h";
    (45,-40)*{\scriptstyle\circ}="7h";
    (55,-40)*{\scriptstyle\circ}="8h";
    (65,-40)*{\scriptstyle\circ}="9h";
    (75,-40)*{\scriptstyle\circ}="10h";
    "7h"; "10h" **\crv{(46,-46) & (74,-45.5)};
    "8h"; "9h" **\crv{(56,-43.5) & (64,-43.5)};
    \endxy
    \]
    \caption{The order}
	\label{Fig10}
\end{figure}
\end{example}

\subsection{Some maps preserving the order}
Denote the set of decorated $(2p, p)$-parenthesis diagrams with $|\mathbb{D}|$ being even
by $M^+(0)$, and similarly denote by $M^-(0)$ the diagrams having odd decorations.
We construct a map $\alpha: M^+(0)\rightarrow M^-(0)$ as follows.
For $(\sigma, \mathbb{D})\in M^+(0)$ with $\sigma(i)=n$, define
$$
\alpha(\sigma, \mathbb{D})=\begin{cases} (\sigma, \mathbb{D}-(i, n)),&\text{if $(i, n)\in \mathbb{D}$;}\\
(\sigma, \mathbb{D}\cup(i, n)),\, &\text{if $(i, n)\notin \mathbb{D}$.}\end{cases}
$$

\begin{example}
The following are $(4, 2)$-decorated parenthesis diagrams arranged in the order of Definition \ref{4.5}.
\begin{figure}[H]
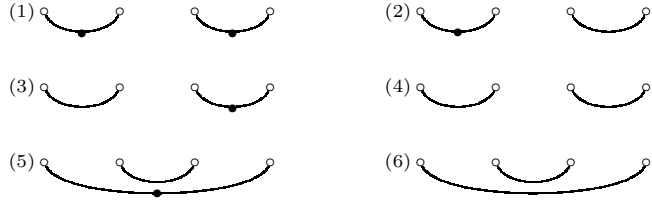

	\[
	\xy
    (-18,0)*{\scriptstyle (1)};
    (-15,0)*{\scriptstyle\circ}="1s";
    (-10,-2.8)*{\scriptstyle\bullet};
    (-5,0)*{\scriptstyle\circ}="2s";
    (5,0)*{\scriptstyle\circ}="3s";
    (10,-2.8)*{\scriptstyle\bullet};
    (15,0)*{\scriptstyle\circ}="4s";
    "1s"; "2s" **\crv{(-14,-3.5) & (-6,-3.5)};
    "3s"; "4s" **\crv{(6,-3.5) & (14,-3.5)};
    (32,0)*{\scriptstyle (2)};
    (35,0)*{\scriptstyle\circ}="6s";
    (40,-2.8)*{\scriptstyle\bullet};
    (45,0)*{\scriptstyle\circ}="7s";
    (55,0)*{\scriptstyle\circ}="8s";
    (65,0)*{\scriptstyle\circ}="9s";
    "6s"; "7s" **\crv{(36,-3.5) & (44,-3.5)};
    "8s"; "9s" **\crv{(56,-3.5) & (64,-3.5)};
    (-18,-10)*{\scriptstyle (3)};
    (-15,-10)*{\scriptstyle\circ}="1d";
    (10,-12.8)*{\scriptstyle\bullet};
    (-5,-10)*{\scriptstyle\circ}="2d";
    (5,-10)*{\scriptstyle\circ}="3d";
    (15,-10)*{\scriptstyle\circ}="4d";
    "1d"; "2d" **\crv{(-14,-13.5) & (-6,-13.5)};
    "3d"; "4d" **\crv{(6,-13.5) & (14,-13.5)};
    (32,-10)*{\scriptstyle (4)};
    (35,-10)*{\scriptstyle\circ}="6d";
    (45,-10)*{\scriptstyle\circ}="7d";
    (55,-10)*{\scriptstyle\circ}="8d";
    (65,-10)*{\scriptstyle\circ}="9d";
    "6d"; "7d" **\crv{(36,-13.5) & (44,-13.5)};
    "8d"; "9d" **\crv{(56,-13.5) & (64,-13.5)};
    (-18,-20)*{\scriptstyle (5)};
    (-15,-20)*{\scriptstyle\circ}="1f";
    (-5,-20)*{\scriptstyle\circ}="2f";
    (0,-24.2)*{\scriptstyle\bullet};
    (5,-20)*{\scriptstyle\circ}="3f";
    (15,-20)*{\scriptstyle\circ}="4f";
    "1f"; "4f" **\crv{(-14,-25.5) & (14,-25.5)};
    "2f"; "3f" **\crv{(-4,-23.5) & (4,-23.5)};
    (32,-20)*{\scriptstyle (6)};
    (35,-20)*{\scriptstyle\circ}="6f";
    (45,-20)*{\scriptstyle\circ}="7f";
    (55,-20)*{\scriptstyle\circ}="8f";
    (65,-20)*{\scriptstyle\circ}="9f";
    "6f"; "9f" **\crv{(36,-25.5) & (64,-25.5)};
    "7f"; "8f" **\crv{(46,-23.5) & (54,-23.5)};
    \endxy
    \]
    \caption{(4,2)-diagrams}
	\label{(4,2)}
\end{figure}
The diagrams in $M^+(0)$ are (1), (4), (6) and they are mapped by $\alpha$ to (2), (3), (5), respectively.
Clearly, in this example $\alpha$ preserves the order in Definition \ref{4.5}.
\end{example}

For convenience, we always write $\alpha(\sigma, \mathbb{D})=(\alpha(\sigma), \alpha(\mathbb{D}))$
for arbitrary maps and $(n, p)$-decorated parenthesis diagrams.
It is easy to check that $\alpha$ is a bijection and we omit the details.

\begin{lem}\label{4.7}
$|M^+(0)|=|M^-(0)|$.
\end{lem}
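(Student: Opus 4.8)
The plan is to exhibit an explicit bijection between $M^+(0)$ and $M^-(0)$, which immediately yields the equality of cardinalities. The natural candidate is exactly the map $\alpha$ constructed just above the statement: for a decorated $(2p,p)$-parenthesis diagram $(\sigma,\mathbb{D})$, the involution $\sigma$ pairs the rightmost dot $n$ with some $i<n$ (note $n$ cannot be isolated here, since a $(2p,p)$-diagram has no isolated dots at all), and $\alpha$ toggles membership of the pair $(i,n)$ in the decoration set.

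The key steps are as follows. First I would check that $\alpha$ is well-defined, i.e.\ that $\alpha(\sigma,\mathbb{D})$ is again a legitimate decorated parenthesis diagram: the underlying involution $\sigma$ is unchanged, so I only need to verify that $\mathbb{D}\pm(i,n)$ is still a valid decoration set. Condition (1) in the definition of a decoration set (no isolated dot to the left of a decorated arc) is vacuous here because there are no isolated dots; condition (2) (no decorated arc nested strictly inside another decorated arc) is preserved because $(i,n)$ with $\sigma(i)=n$ is an outermost arc — no pair $(k,l)$ can satisfy $i<k<l<n$ would be needed, and more to the point adding or removing the outermost arc $(i,n)$ cannot create a forbidden nesting, since nothing can contain $(i,n)$. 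Second, I would observe that $\alpha$ changes the parity of $|\mathbb{D}|$ by exactly one, so $\alpha$ maps $M^+(0)$ into $M^-(0)$ and vice versa. Third, since applying $\alpha$ twice toggles $(i,n)$ back, $\alpha$ is its own inverse (as a map on all of $M(0)=M^+(0)\sqcup M^-(0)$), hence $\alpha\colon M^+(0)\to M^-(0)$ is a bijection. Therefore $|M^+(0)|=|M^-(0)|$.

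I do not expect a serious obstacle: the statement is a counting lemma and the bijection has already been handed to us in the text (the paper even remarks ``it is easy to check that $\alpha$ is a bijection''). The only point requiring a line of care is confirming that toggling the outermost arc's decoration never violates the decoration-set axioms, and that $n$ is genuinely paired (not isolated) so that the rightmost pair $(i,n)$ exists — both of which follow immediately from the fact that a $(2p,p)$-parenthesis diagram has exactly $2p$ non-fixed points out of $2p$ total, hence no fixed points. So the proof is essentially: ``The map $\alpha$ defined above is a bijection from $M^+(0)$ to $M^-(0)$, as $\alpha^2=\mathrm{id}$ and $\alpha$ reverses the parity of $|\mathbb{D}|$; this proves the claim.''
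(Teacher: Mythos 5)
Your approach is exactly the paper's: the paper offers no further argument beyond asserting that the map $\alpha$ is a bijection, and your proof supplies the routine details (parity swap, $\alpha^2=\mathrm{id}$), so you are on the same route. One point in your well-definedness check is aimed in the wrong direction, though: when you add $(i,n)$ to $\mathbb{D}$, the danger is not that some arc might \emph{contain} $(i,n)$ (indeed none can), but that the newly decorated $(i,n)$ might \emph{contain} an already decorated arc $(k,l)$ with $i<k<l<n$, which is precisely the configuration the nesting condition forbids. This case has to be excluded by observing that an arc nested under $(i,n)$ can never carry a decoration in the first place: the decoration conditions are to be read as in Lemma~\ref{3.3}(4), i.e.\ a decorated arc must be exposed to the left boundary, so an arc trapped inside $(i,n)$ is undecorated regardless of whether $(i,n)$ is (this is visible in Figure~\ref{(4,2)}, where the inner arc of the nested diagram $(1,4)(2,3)$ is never decorated). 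With that one sentence added, the toggle is well-defined and your bijection argument goes through.
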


The following lemma says that the map $\alpha$ is in fact an isomorphism between the
two totally ordered sets $M^+(0)$ and $M^-(0)$, where the order is that given in Definition \ref{4.5}.

\begin{lem}\label{4.8}
Let $(\sigma_1, \mathbb{D}_1), (\sigma_2, \mathbb{D}_2)\in M^+(0)$ with
$(\sigma_1, \mathbb{D}_1)<(\sigma_2, \mathbb{D}_2)$. Then
$$\alpha(\sigma_1, \mathbb{D}_1)<\alpha(\sigma_2, \mathbb{D}_2).$$
\end{lem}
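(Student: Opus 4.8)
Since the map $\alpha$ changes only whether the pair $(i,n)$ with $\sigma(i)=n$ (here $n=2p$ is the largest element of the underlying ordered set) is decorated, it leaves the underlying parenthesis diagram $\sigma$ untouched. My first step is therefore to reduce to the case $\sigma_1=\sigma_2$. For an $(n,p)$-decorated parenthesis diagram the first $p$ entries $i_1>\dots>i_p$ of the sequence of Definition~\ref{4.5} are the second numbers of the pairs of $\sigma$, and a $(2p,p)$-parenthesis diagram (which has no isolated dot) is recovered from this set of ``closers'' by the non-crossing condition; hence if $\sigma_1\neq\sigma_2$ these two strictly decreasing length-$p$ strings already differ, so the comparison $(\sigma_1,\mathbb D_1)<(\sigma_2,\mathbb D_2)$ is decided within the first $p$ coordinates, which $\alpha$ fixes, and the conclusion is immediate. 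So it suffices to treat the case $\sigma_1=\sigma_2=:\sigma$ with $\mathbb D_1\neq\mathbb D_2$.

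In that case the first $p$ coordinates of all four sequences coincide, and the comparison is governed entirely by the ``decoration tails'': for a decoration set $\mathbb D$ let $\mathbf b(\mathbb D)$ be the increasing list of second numbers of its decorations, padded by $\infty$'s to length $p$. Because $n$ is the maximum element and $\sigma$ has no isolated dot, the pair $(i,n)$ exists and $n$ is always the largest finite value that can occur in such a tail. Thus $\alpha$ acts on $\mathbf b(\mathbb D)$ by deleting the trailing $n$ (shifting the $\infty$'s forward) when $(i,n)\in\mathbb D$, and by inserting $n$ just before the $\infty$-block when $(i,n)\notin\mathbb D$. Writing $u$ (resp.\ $v$) for the finite part of $\mathbf b(\mathbb D_1)$ (resp.\ $\mathbf b(\mathbb D_2)$) with any trailing $n$ stripped off, each a strictly increasing string with all entries $<n$, the lemma becomes the combinatorial assertion that the lexicographic comparison of the padded strings is not reversed when we pass, as dictated by $\alpha$, between $(u,\infty,\dots)$ and $(u,n,\infty,\dots)$ on the left and between $(v,\infty,\dots)$ and $(v,n,\infty,\dots)$ on the right.

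I would prove this by splitting on whether $(i,n)\in\mathbb D_1$ and whether $(i,n)\in\mathbb D_2$. In the two ``parallel'' cases ($(i,n)$ in both, or in neither) the given strict inequality already forbids $u$ from being a prefix of $v$ or vice versa, so before \emph{and} after applying $\alpha$ the two padded strings first disagree at the same position, where both are finite, and with the same two values; hence the comparison is unchanged, and this uses only that $n$ is the maximum. The two ``crossed'' cases, where $(i,n)$ lies in exactly one of $\mathbb D_1,\mathbb D_2$, are the heart of the matter and the only place the defining property of $M^+(0)$ enters: since $|\mathbb D_1|$ and $|\mathbb D_2|$ are both even but exactly one of $\mathbb D_1,\mathbb D_2$ contains $(i,n)$, the strings $u$ and $v$ have lengths of opposite parity, so $u\neq v$; a short check --- one of $u,v$ a proper prefix of the other, using that all their entries are $<n$, versus a genuine first disagreement --- then shows the order is preserved. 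The main obstacle is precisely this last point: without the even-decoration hypothesis the degenerate configuration $u=v$ can occur in a crossed case and there $\alpha$ \emph{reverses} the order, so the parity constraint is not a technicality but exactly what makes the statement true; the remaining verifications are routine manipulations of the lexicographic order.
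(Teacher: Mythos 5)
Your strategy coincides with the paper's: reduce to $\sigma_1=\sigma_2$ because $\alpha$ fixes the first $p$ coordinates of the sequence of Definition~\ref{4.5} (and a $(2p,p)$-parenthesis diagram is determined by its closers), then compare decoration tails, with the evenness of $|\mathbb{D}_1|$ and $|\mathbb{D}_2|$ invoked exactly once to exclude the one configuration where $\alpha$ would reverse the order; this matches the paper's case analysis, whose Case 3 parity argument is your ``crossed'' case. Your crossed cases are handled correctly. However, the step you give for the parallel cases fails as stated: the hypothesis does \emph{not} forbid $v$ from being a proper prefix of $u$. Take $n=8$, $p=4$, $\sigma=(1,2)(3,4)(5,6)(7,8)$, $\mathbb{D}_1=\{(1,2),(3,4),(5,6),(7,8)\}$, $\mathbb{D}_2=\{(1,2),(7,8)\}$; both lie in $M^+(0)$, the tails are $(2,4,6,8)$ and $(2,8,\infty,\infty)$, so indeed $(\sigma,\mathbb{D}_1)<(\sigma,\mathbb{D}_2)$, yet $u=(2,4,6)$ and $v=(2)$, so $v$ is a proper prefix of $u$. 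Consequently your assertion that before and after applying $\alpha$ the strings first disagree ``where both are finite, and with the same two values'' is also false here: the disagreement is $4$ against $8$ before $\alpha$ and $4$ against $\infty$ after.

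The lemma's conclusion still holds in this configuration, and the repair is the same one-line comparison you already use in the crossed cases: when $v$ is a proper prefix of $u$, the first disagreement pits an entry of $u$, which is finite, less than $n$, and untouched by $\alpha$, against an entry of the second tail that lies in $\{n,\infty\}$ both before and after applying $\alpha$, so the inequality is preserved. What the hypothesis genuinely forbids in the parallel cases is only the opposite prefix relation ($u$ a proper prefix of $v$, which would give a first disagreement of the form $n$ or $\infty$ against something smaller) together with $u=v$ (which would force $\mathbb{D}_1=\mathbb{D}_2$). With that missing subcase supplied, your argument is complete and is essentially the paper's proof reorganized by the membership of $(\sigma(n),n)$ in $\mathbb{D}_1$ and $\mathbb{D}_2$ rather than by the location of the first difference.
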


\begin{proof}
Assume that the associated sequences of
$(\sigma_1, \mathbb{D}_1)$ and $(\sigma_2, \mathbb{D}_2)$ are $$(i_1, i_2, \cdots, i_p, i_{p+1}, i_{p+2}, \cdots, i_{2p})$$ and
$$(j_1, j_2, \cdots, j_p, j_{p+1}, j_{p+2}, \cdots, j_{2p}),$$ respectively.
Since $(\sigma_1, \mathbb{D}_1)<(\sigma_2, \mathbb{D}_2)$, there exists some $s$ such that $i_s<j_s$
and $i_k=j_k$ for $k=1, \cdots, s-1$.

If $s< p$,
then $\alpha(\sigma_1, \mathbb{D}_1)<\alpha(\sigma_2, \mathbb{D}_2)$ because $\alpha$
does not influence the first $p$ numbers of the associated sequence.

The case $s=p$ is impossible, because $i_p=n=j_p$.

If $s>p$, then $\sigma_1=\sigma_2$. We split the proof into the following three cases.

{\it Case 1.}\,\, $j_{s}< n$. As a result, $i_{s}< n$ too.
We have from the definition of $\alpha$
that the first $s$ numbers of the associated sequence of $(\sigma_1, \mathbb{D}_1)$,
and that of $(\sigma_2, \mathbb{D}_2)$ are not changed by $\alpha$.
This implies that $\alpha(\sigma_1, \mathbb{D}_1)<\alpha(\sigma_2, \mathbb{D}_2)$.

{\it Case 2.}\,\, $j_{s}=n$. By the definition of $\alpha$, the $s$-th number
of the associated sequence of $\alpha(\sigma_2, \mathbb{D}_2)$ is $\infty$ and the others are equal to that
of $(\sigma_2, \mathbb{D}_2)$. Note that $i_s<j_s$. Then $\alpha$ does not influence the first $s$
numbers of the associated sequence of $(\sigma_1, \mathbb{D}_1)$. Clearly, $i_s<\infty$ and thus
$\alpha(\sigma_1, \mathbb{D}_1)<\alpha(\sigma_2, \mathbb{D}_2)$.

{\it Case 3.}\,\, $j_{s}=\infty$. Then $i_s$ is a finite number. We assert that $i_s<n$.
In fact, if $i_s=n$, then there is no decoration but $(\sigma(n), n)$,
which is in $\mathbb{D}_1$ and not in $\mathbb{D}_2$. This implies that the
parities of $|\mathbb{D}_1|$ and $|\mathbb{D}_2|$ are not the same, which contradicts with the hypothetical condition.
Because $j_{s}=\infty$, we have $(\sigma(n), n)\notin \mathbb{D}_2$ if we employ Definition \ref{4.5}.
Consequently, the $s$-th number of the associated sequence of $\alpha(\sigma_2, \mathbb{D}_2)$
is $n$ and is more than $i_s$ still. However, the first $s$ numbers of the  associated sequence
of $\alpha(\sigma_1, \mathbb{D}_1)$ is not changed by $\alpha$ and thus the inequality holds in this case.

To sum up, either $s< p$ or $s>p$, the inequality
$\alpha(\sigma_1, \mathbb{D}_1)<\alpha(\sigma_2, \mathbb{D}_2)$ holds and the proof is completed.
\end{proof}

Let us consider other maps preserving the order in Definition \ref{4.5}.
Let $n=2p$. Then Lemma \ref{4.4} implies that $|M(1)_{n-1}|=|M^+(0)|$.
Define a map $$\beta^+: M^+(0)\rightarrow M(1)_{n-1}$$ as follows.
For $(\sigma, \mathbb{D})\in M^+(0)$ with $\sigma=(k_1, l_1)\cdots(k_{p-1}, l_{p-1})(k, n)$
 $$\beta^+(\sigma, \mathbb{D})=
((k_1, l_1)\cdots(k_{p-1}, l_{p-1})(k), \mathbb{D}-(k, n)).$$
Similarly, one can define a map $\beta^-$ from $M^-(0)$ to $M(1)_{n-1}$.

\begin{example}
Recall the diagrams in Figure \ref{(4,2)}. Then
\begin{figure}[H]
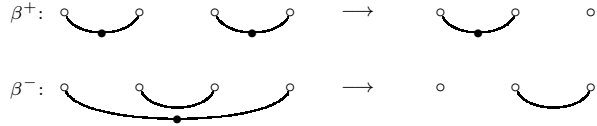

	\[
	\xy
    (-20,0)*{\scriptstyle \beta^+:};
    (-15,0)*{\scriptstyle\circ}="1s";
    (-10,-2.8)*{\scriptstyle\bullet};
    (-5,0)*{\scriptstyle\circ}="2s";
    (5,0)*{\scriptstyle\circ}="3s";
    (10,-2.8)*{\scriptstyle\bullet};
    (15,0)*{\scriptstyle\circ}="4s";
    "1s"; "2s" **\crv{(-14,-3.5) & (-6,-3.5)};
    "3s"; "4s" **\crv{(6,-3.5) & (14,-3.5)};
    (24,0)*{\scriptstyle \longrightarrow};
    (35,0)*{\scriptstyle\circ}="6s";
    (40,-2.8)*{\scriptstyle\bullet};
    (45,0)*{\scriptstyle\circ}="7s";
    (55,0)*{\scriptstyle\circ}="8s";
    "6s"; "7s" **\crv{(36,-3.5) & (44,-3.5)};
    (-20,-10)*{\scriptstyle \beta^-:};
    (-15,-10)*{\scriptstyle\circ}="1f";
    (-5,-10)*{\scriptstyle\circ}="2f";
    (0,-14.2)*{\scriptstyle\bullet};
    (5,-10)*{\scriptstyle\circ}="3f";
    (15,-10)*{\scriptstyle\circ}="4f";
    "1f"; "4f" **\crv{(-14,-15.5) & (14,-15.5)};
    "2f"; "3f" **\crv{(-4,-13.5) & (4,-13.5)};
    (24,-10)*{\scriptstyle \longrightarrow};
    (35,-10)*{\scriptstyle\circ}="6f";
    (45,-10)*{\scriptstyle\circ}="7f";
    (55,-10)*{\scriptstyle\circ}="8f";
    "7f"; "8f" **\crv{(46,-13.5) & (54,-13.5)};
    \endxy
    \]
    \caption{$\beta^+$ and $\beta^-$}
\end{figure}
\end{example}

Note that $\beta^+$ and $\beta^-$ are both bijections, and we omit the checking process.
Using the methods in Lemma \ref{4.8}, we obtain the following lemma.
Here we omit the proof and leave it to the reader as an exercise.
\begin{lem}\label{4.9}
Let $(\sigma_1, \mathbb{D}_1), (\sigma_2, \mathbb{D}_2)\in M^+(0)$ with
$(\sigma_1, \mathbb{D}_1)<(\sigma_2, \mathbb{D}_2)$. Then
$$\beta^+(\sigma_1, \mathbb{D}_1)<\beta^+(\sigma_2, \mathbb{D}_2)\,\,\,\,
{\rm and}\,\,\,\, \beta^-(\sigma_1, \mathbb{D}_1)<\beta^-(\sigma_2, \mathbb{D}_2).$$
\end{lem}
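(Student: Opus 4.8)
The plan is to track how $\beta^+$ (and, with the identical formula, $\beta^-$) acts on the associated sequences of Definition \ref{4.5}, and then to rerun the case analysis of Lemma \ref{4.8} essentially verbatim. First I would record this action. Since here $n=2p$, any $(\sigma,\mathbb{D})\in M^+(0)$ has no isolated dot, so $n$ lies in a pair $(k,n)$ and the associated sequence $(i_1,\dots,i_p,i_{p+1},\dots,i_{2p})$ always starts with $i_1=n$; in particular the first index $s$ at which two such sequences can disagree satisfies $s\ge 2$. Passing to $\beta^+(\sigma,\mathbb{D})$, an $(n-1,p-1)$-decorated parenthesis diagram, one deletes the leading entry $n$ from the pair block, so the pair block of the image is $(i_2,\dots,i_p)$; and one deletes the entry $n$ from the decoration block exactly when $(k,n)\in\mathbb{D}$ (in which case, $n$ being the largest possible second number, it is the last finite entry of that block), re-padding on the right with $\infty$. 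The point to extract is that, apart from this single possible deletion of $n$ and the re-padding behind it, $\beta^+$ leaves every entry where it was; in particular it alters no decoration-block entry sitting before the position of $n$. The map $\beta^-$ acts on sequences by exactly the same recipe.

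Now let $(\sigma_1,\mathbb{D}_1)<(\sigma_2,\mathbb{D}_2)$ in $M^+(0)$ have associated sequences $(i_\bullet)$ and $(j_\bullet)$, and let $s$ be the first index with $i_s<j_s$. If $2\le s\le p$, the discrepancy lies in the pair block; after $\beta^+$ drops the common leading entry $i_1=j_1=n$ from both pair blocks, it reappears at position $s-1$, so $\beta^+(\sigma_1,\mathbb{D}_1)<\beta^+(\sigma_2,\mathbb{D}_2)$. Otherwise $i_k=j_k$ for all $k\le p$; then $\sigma_1=\sigma_2$ (the set of second numbers of the pairs determines the parenthesis diagram, as already used in Lemma \ref{4.8}), so the two images have equal pair blocks and we are reduced to comparing decoration blocks, i.e.\ $s\ge p+1$. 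Here I would split as in Lemma \ref{4.8}. If $j_s<n$, then $i_s<j_s<n$, and because the decoration entries increase to $\infty$ and $n$ is the largest possible second number, $n$ occurs among the decoration entries of neither diagram at any position $\le s$; hence $\beta^+$ leaves positions $p+1,\dots,s$ of both decoration blocks untouched, the discrepancy persists, and the images compare the same way. If $j_s=n$, then $(k,n)\in\mathbb{D}_2$ and $\beta^+$ turns the $s$-th decoration-block entry of $(\sigma_2,\mathbb{D}_2)$ into $\infty$, whereas for $(\sigma_1,\mathbb{D}_1)$ one has $i_s<n$ and the $s$-th entry stays the finite number $i_s<\infty$; so again $\beta^+(\sigma_1,\mathbb{D}_1)<\beta^+(\sigma_2,\mathbb{D}_2)$. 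Finally, if $j_s=\infty$ then $i_s$ is finite, and the key observation (exactly as in Case~3 of Lemma \ref{4.8}) is that $i_s=n$ is impossible: it would force $(k,n)\in\mathbb{D}_1$, $(k,n)\notin\mathbb{D}_2$, with $\mathbb{D}_1$ and $\mathbb{D}_2$ otherwise equal, so $|\mathbb{D}_1|$ and $|\mathbb{D}_2|$ would have opposite parity, contradicting that both lie in $M^+(0)$; thus $i_s<n$ and we finish as in the case $j_s<n$.

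For $\beta^-$ I would run the same argument with $M^-(0)$ in place of $M^+(0)$, the parity obstruction now reading ``$|\mathbb{D}_1|,|\mathbb{D}_2|$ both odd''; alternatively one checks straight from the definitions that $\beta^+=\beta^-\circ\alpha$, so that $\beta^-=\beta^+\circ\alpha^{-1}$ is a composite of order isomorphisms by Lemma \ref{4.8}. I expect the only real obstacle to be the bookkeeping in the first step: one must verify that deleting the leading $n$ shifts the pair block by one without destroying the discrepancy, and that deleting $n$ from the decoration block together with the re-padding never disturbs the entries at or before the discrepancy position $s$; once that is nailed down, the argument is a line-by-line transcription of Lemma \ref{4.8}, the one genuinely new check being that the configuration $j_s=\infty$, $i_s=n$ cannot arise — which is precisely the subtle spot rescued by the parity argument there.
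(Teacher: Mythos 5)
Your proposal is correct and follows exactly the route the paper intends: the paper omits this proof, stating only that it follows ``using the methods in Lemma \ref{4.8}'', and your argument is precisely that adaptation (tracking how $\beta^{\pm}$ acts on the associated sequences, with the same three-case analysis and the same parity argument excluding $i_s=n$, $j_s=\infty$), plus the valid extra observation that $\beta^-=\beta^+\circ\alpha^{-1}$ reduces the second claim to Lemma \ref{4.8}. The bookkeeping you flag (the image sequence being two entries shorter) does work out, since whenever the discrepancy lies in the decoration block with $i_s<n$ its relative position is at most $p-1$.
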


\smallskip

Let $\lam=n-2p$ for $n>2p$ and write the set of $(n, p)$-decorated parenthesis diagrams
$(\sigma, \mathbb{D})$ with $\sigma(n)\neq n$ by $M(\lam)|_{\sigma(n)\neq n}$.
Then by Lemma \ref{4.4} $$|M(\lam)|_{\sigma(n)\neq n}|=|M(n-2p+1)_{n-1}|.$$
Let $(\sigma, \mathbb{D})\in M(\lam)|_{\sigma(n)\neq n}$ with $\sigma=(k_1, l_1)\cdots(k_{p-1}, l_{p-1})(k, n)(j_1)\cdots (j_{n-2p})$.
It is helpful to point out that $(k, n)\notin \mathbb{D}$.
In fact, since $n>2p$, there exists $i$ such that $\sigma(i)=i$ with $i<k$.
Then we have from Definition \ref{4.2} that $(k, n)$ is an arc without a decoration.
Define a bijection $$\gamma: M(\lam)|_{\sigma(n)\neq n}\rightarrow M(n-2p+1)_{n-1}$$ by
$$\gamma(\sigma, \mathbb{D})=((k_1, l_1)\cdots(k_{p-1}, l_{p-1})(k)(j_1)\cdots (j_{n-2p}),\, \mathbb{D}).$$

\begin{example}
By the definition of $M(\lam)|_{\sigma(n)\neq n}$, the diagrams in $M(1)|_{\sigma(5)\neq 5}$ are Figure \ref{Fig10} (7)-(10).
\begin{figure}[H]
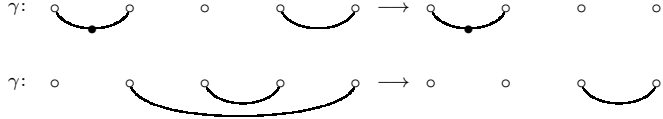

	\[
	\xy
    (-20,0)*{\scriptstyle \gamma:};
    (-15,0)*{\scriptstyle\circ}="1g";
    (-10,-2.8)*{\scriptstyle\bullet};
    (-5,0)*{\scriptstyle\circ}="2g";
    (5,0)*{\scriptstyle\circ}="3g";
    (15,0)*{\scriptstyle\circ}="4g";
    (25,0)*{\scriptstyle\circ}="5g";
    "1g"; "2g" **\crv{(-14,-3.5) & (-6,-3.5)};
    "4g"; "5g" **\crv{(16,-3.5) & (24,-3.5)};
    (30,0)*{\scriptstyle \longrightarrow};
    (35,0)*{\scriptstyle\circ}="6g";
    (40,-2.8)*{\scriptstyle\bullet};
    (45,0)*{\scriptstyle\circ}="7g";
    (55,0)*{\scriptstyle\circ}="8g";
    (65,0)*{\scriptstyle\circ}="9g";
    "6g"; "7g" **\crv{(36,-3.5) & (44,-3.5)};
    (-20,-10)*{\scriptstyle \gamma:};
    (-15,-10)*{\scriptstyle\circ}="6h";
    (-5,-10)*{\scriptstyle\circ}="7h";
    (5,-10)*{\scriptstyle\circ}="8h";
    (15,-10)*{\scriptstyle\circ}="9h";
    (25,-10)*{\scriptstyle\circ}="10h";
    "7h"; "10h" **\crv{(-4,-16) & (24,-15.5)};
    "8h"; "9h" **\crv{(6,-13.5) & (14,-13.5)};
    (30,-10)*{\scriptstyle \longrightarrow};
    (35,-10)*{\scriptstyle\circ}="6g";
    (45,-10)*{\scriptstyle\circ}="7g";
    (55,-10)*{\scriptstyle\circ}="8g";
    (65,-10)*{\scriptstyle\circ}="9g";
    "8g"; "9g" **\crv{(56,-13.5) & (64,-13.5)};
    \endxy
    \]
    \caption{The map $\gamma$}
\end{figure}
\end{example}

The following lemma is obvious.
\begin{lem}\label{4.10}
Let $(\sigma_1, \mathbb{D}_1), (\sigma_2, \mathbb{D}_2)\in M(\lam)|_{\sigma(n)\neq n}$ with
$(\sigma_1, \mathbb{D}_1)<(\sigma_2, \mathbb{D}_2)$. Then
$$\gamma(\sigma_1, \mathbb{D}_1)<\gamma(\sigma_2, \mathbb{D}_2).$$
\end{lem}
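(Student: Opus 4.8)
The plan is to compare associated sequences directly, exploiting the fact that $\gamma$ only deletes the pair $(k,n)$ and leaves the decoration set untouched. First I would record the shape of the associated sequence of an element $(\sigma,\mathbb{D})\in M(\lam)|_{\sigma(n)\neq n}$. Since $(k,n)$ is a pair of $\sigma$ and $n$ is the largest element, the first entry of the sequence is $i_1=n$. Moreover, as noted just before the statement, $(k,n)\notin\mathbb{D}$ because $n>2p$ forces an isolated dot to the left of $k$; hence there are at most $p-1$ decorations, so the last entry of the sequence is $i_{2p}=\infty$.

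Next I would describe the effect of $\gamma$ on the sequence. Passing to $\gamma(\sigma,\mathbb{D})$ removes the pair $(k,n)$ (turning $k$ into an isolated dot and deleting the dot $n$), while the remaining $p-1$ pairs and the whole decoration set $\mathbb{D}$ are unchanged. Therefore the second numbers of the pairs of $\gamma(\sigma)$ are precisely $i_2>\cdots>i_p$, the decorations of $\gamma(\sigma,\mathbb{D})$ have exactly the same second numbers as those of $(\sigma,\mathbb{D})$, and the number of trailing $\infty$'s drops by one. In other words, the associated sequence of $\gamma(\sigma,\mathbb{D})$ is obtained from that of $(\sigma,\mathbb{D})$ by deleting the first entry (always $n$) and the last entry (always $\infty$).

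Finally I would conclude by a direct lexicographic argument. Suppose $(\sigma_1,\mathbb{D}_1)<(\sigma_2,\mathbb{D}_2)$, so their associated sequences agree in positions $1,\dots,s-1$ and satisfy $i_s<j_s$ at position $s$. Since $i_1=j_1=n$ we have $s\geq 2$, and since $i_{2p}=j_{2p}=\infty$ we have $s\leq 2p-1$. Deleting the first and last entries shifts position $s$ to position $s-1$, with $1\leq s-1\leq 2p-2$, so the sequences of $\gamma(\sigma_1,\mathbb{D}_1)$ and $\gamma(\sigma_2,\mathbb{D}_2)$ agree in positions $1,\dots,s-2$ and differ at position $s-1$ with the same strict inequality. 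Hence $\gamma(\sigma_1,\mathbb{D}_1)<\gamma(\sigma_2,\mathbb{D}_2)$. There is no real obstacle here; the only point requiring a moment's care is checking that the extreme positions of the sequence are occupied by $n$ and $\infty$, which is exactly what rules out the degenerate cases $s=1$ and $s=2p$ that had to be handled separately in the proof of Lemma~\ref{4.8}.
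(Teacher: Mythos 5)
Your proof is correct: the key observations that $i_1=j_1=n$ (since $n$ is paired), that $(k,n)\notin\mathbb{D}$ forces $i_{2p}=j_{2p}=\infty$, and that $\gamma$ simply deletes the first and last entries of the associated sequence, together give the lexicographic comparison cleanly. The paper offers no argument here (it declares the lemma obvious), and your verification is exactly the natural one in the spirit of its proof of Lemma~\ref{4.8}, so there is nothing to flag.
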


\medskip

\section{Branching rule for cell modules}
In this section, we first give the cellular structure of ${\rm TLD}_n(\delta)$
by $(n, p)$-decorated parenthesis diagrams, and then study the branching rule for the cell modules of ${\rm TLD}_n(\delta)$.
In particular, some notations will be fixed for later use.

\subsection{ Cellular structure of ${\rm TLD}_n(\delta)$}
Green \cite{G2} gave the diagrammatic realization of ${\rm TLD}_n(\delta)$ and
pointed out in Section 7 that the diagram calculi can be used to describe
the cellular structure. Unfortunately, Green did not write out all of the details.
As far as we know, there is not a precise reference which defines a cellular basis
for ${\rm TLD}_n(\delta)$ by diagrams. This diagrammatical cellular basis is very important for our aim.
Let us write it out here which can be considered as a supplement of Green's work \cite[Section 7]{G2}.

Let ${\rm TL}_n(\delta)$ be a Temperley-Lieb algebra of type A. The cellular structure of
${\rm TL}_n(\delta)$ is simple and well-known. Let us recall it and fix some notations which are needed in the sequel.
To make a difference with type D, we will change the font of letters.
The poset $\Lambda$ is $\{t\in\{1, 2, \cdots, n\}\mid n-t\in 2\mathbb{Z}\}$.
For $\lam\in\Lambda$, $\mathcal{M}(\lam)$ is the set of $(n, p)$-parenthesis diagrams. If $\mathcal{U,V}\in \mathcal{M}(\lam)$,
then $\mathcal{C}_{\mathcal{U,V}}^{\lam}$ is the diagram with $\mathcal{U}$ on the top and $\mathcal{V}$ on the bottom,
the isolated dots of $\mathcal{U}$ and $\mathcal{V}$ being joined in the unique way such that the diagram being planar.
For $\lam\in\Lambda$, the cell module will be denoted by $\mathcal{S}(n, p)$ and the Gram matrix by $\mathcal{G}(n, p)$.

\smallskip

Now let us construct a cellular basis for ${\rm TLD}_n(\delta)$. According to Definition \ref{2.1}
we first need to determine the poset $\Lambda$ and the associated set $M(\lam)$ for each $\lam\in\Lambda$.
Define $$\Lambda=\left\{
                \begin{array}{ll}
                  \{n> n-2> \cdots> 4> 2> 0^+> 0^-> \dot{n-2}> \cdots> \dot{2}> \dot{0} \}, & \hbox{if $n$ is even;} \\
                  \{n> n-2> \cdots >3 >1 > \dot{n-2} > \cdots> \dot{3}> \dot{1} \}, & \hbox{if $n$ is odd.}
                \end{array}
              \right.$$
If $\lam\in \{1, 2, \cdots, n\}$, then
$M(\lam)$ is the set of all $(n, \frac{n-\lam}{2})$-decorated parenthesis diagrams.
If $\lam=0^+$, then $M(\lam)$ is the set of $(n, \frac{n}{2})$-decorated parenthesis
diagrams with the number of decorations being even.
If $\lam=0^-$, then $M(\lam)$ consists of those with odd decorations.
If $\lam=\dot{k}$, where $k\in\{0, 1, 2, \cdots, n-2\}$,
then $M(\lam)$ is the set of $(n, \frac{n-k}{2})$ undecorated parenthesis diagrams.

Given a pair of ordered elements $S, T$ in $M(\lam)$, $C_{S, T}^{\lam}$ is a
decorated Temperley-Lieb diagram constructed as follows. If $\lam\neq \dot{k}, \,\,
k=0, 1, \cdots, n-2$, then put the decorated cup diagram of $S$
on the top  and put the decorated cap diagram of $T$ on the bottom. Then join the
isolated dots of $S$ with that of $T$ in the unique way. This process is completed if $s+t$ is even,
where $s$ and $t$ are the number of decorations of $S$ and $T$ respectively.
Otherwise, one need to add a decoration to the leftmost vertical arc.
Clearly, the diagram constructed is of the second type and all diagrams of the second type can be obtained by this way. If
$\lam=\dot{k}, \,\,k=0, 1, \cdots, n-2$, we first get a  Temperley-Lieb
diagram as in the case of type A and then add a decorated circuit. Clearly,
the diagrams obtained are the whole lot of the first type.

\begin{remarks}
(1) Given an arbitrary decorated diagram (the first type or the second type), we can obtain a unique pair of
decorated parenthesis diagrams by cutting the diagram horizontally along the middle
(ignoring the possible decorated circuit and the decorations of vertical arcs).

(2) Given a pair of decorated parenthesis diagrams, there may be two different decorated diagrams
because for $\lam, \mu\in \Lambda$, the intersection $M(\lam)\cap M(\mu)$ does not necessarily being empty.
Of course, another special case is that there being no decorated diagrams corresponding to them.
For example, two decorated parenthesis diagrams with different number of isolated dots.
\end{remarks}

Let us illustrate an example of forming a decorated Temperley-Lieb
diagram of the second type by two decorated parenthesis diagrams.

\begin{example}\label{5.1}
Let $n=7$, $\lam=3$, $\sigma=(1, 2)(3)(4, 5)(6)(7)$ with a decoration $(1, 2)$
and $\tau=(1)(2)(3, 6)(4, 5)(7)$ with no decoration. Then the corresponding
decorated Temperley-Lieb diagram is as follows.
\begin{figure}[H]
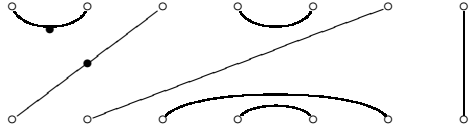

	\[
	\xy
    (-15,0)*{\scriptstyle\circ}="1s";
    (-5,0)*{\scriptstyle\circ}="2s";
    (5,0)*{\scriptstyle\circ}="3s";
    (15,0)*{\scriptstyle\circ}="4s";
    (25,0)*{\scriptstyle\circ}="5s";
    (35,0)*{\scriptstyle\circ}="6s";
    (45,0)*{\scriptstyle\circ}="7s";
    (-15,15)*{\scriptstyle\circ}="1d";
    (-5,15)*{\scriptstyle\circ}="2d";
    (-10,12)*{\scriptstyle\bullet};
    (-5,7.5)*{\scriptstyle\bullet};
    (5,15)*{\scriptstyle\circ}="3d";
    (15,15)*{\scriptstyle\circ}="4d";
    (25,15)*{\scriptstyle\circ}="5d";
    (35,15)*{\scriptstyle\circ}="6d";
    (45,15)*{\scriptstyle\circ}="7d";
    "1d"; "2d" **\crv{(-14,11.5) & (-6,11.5)};
    "4d"; "5d" **\crv{(16,11.5) & (24,11.5)};
    "3s"; "6s" **\crv{(7,4.5) & (33,4.5)};
    "4s"; "5s" **\crv{(16,2.5) & (24,2.5)};
    "1s"; "3d" **\dir{-};
    "2s"; "6d" **\dir{-};
    "7s"; "7d" **\dir{-};
    \endxy
    \]
    \caption{Construct cellular basis}
	\label{Fig11}
\end{figure}
\end{example}

To explain that the basis $\{C_{S, T}^{\lam}\}$ defined above is cellular, we need to check
(C2) and (C3) of Definition \ref{2.1}. Clearly, reversing $S$ and $T$ defines an anti-isomorphism of
${\rm TLD}_n(\delta)$ which satisfies (C2). Furthermore, it is easy to check that (C3) holds by
rules of the product of two diagrams and we omit the details here.

We need to fix some more notations that are needed in the sequel. Given the cellular basis defined above,
if $\lam=1, 2, \cdots, n$, we will denote the cell modules $W(\lam)$ by $S(n, p)$,
where $p=\frac{n-\lam}{2}$, and the corresponding Gram matrices are denoted by $G(n, p)$.
If $\lam=\dot{1}, \dot{2}, \cdots, \dot{n-2}$, we denote the cell modules
by $\dot{S}(n, p)$ and the Gram matrices $\dot{G}(n, p)$.
When $n=2p$, there are two cell modules. One has a basis includes all the decorated
parenthesis diagrams with even decorations, and this cell module is denoted
by $S^+(n, p)$ with the Gram matrix writing by $G^+(n, p)$. On the contrary,
if the number of decorations in each decorated parenthesis diagram is odd,
the cell module will be denoted by $S^-(n, p)$, and the Gram matrix $G^-(n, p)$.

\subsection{Branching rule for cell modules}

Let us consider the algebra ${\rm TLD}_{n-1}(\delta)$ as a subalgebra of ${\rm TLD}_{n}(\delta)$
canonically by adding a vertical arc to the right side of each decorated Temperley-Lieb
diagram in ${\rm TLD}_{n-1}(\delta)$. Along this inclusion, every cell module $S(n, p)$ of ${\rm TLD}_{n}(\delta)$
is also a ${\rm TLD}_{n-1}(\delta)$-module, which is denoted by $S(n, p)\downarrow$.

\begin{lem}\label{5.2}
Let $n$ and $p$ be positive integers with $n\geq 4$. We have
\begin{enumerate}
\item[(1)]\,If $n>2p+1$, then there is an exact sequence
$$0\rightarrow S(n-1, p)\rightarrow S(n, p)\downarrow\rightarrow S(n-1, p-1)\rightarrow 0.$$
\item[(2)]\,If $n=2p+1$, then there is an exact sequence
$$0\rightarrow S^+(2p, p)\oplus S^-(2p, p)\rightarrow S(n, p)\downarrow\rightarrow S(2p, p-1)\rightarrow 0.$$
\item[(3)]\,If $n=2p$, then $S^+(2p, p)\downarrow \cong S(2p-1, p-1)\cong S^-(2p, p)\downarrow.$
\end{enumerate}
\end{lem}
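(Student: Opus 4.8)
The whole lemma is a statement about how a cell module $W(\lambda)$ for ${\rm TLD}_n(\delta)$ decomposes when restricted to ${\rm TLD}_{n-1}(\delta)$, and the right tool is the diagrammatic cellular basis built in Section~5.1 together with the order-preserving bijections $\alpha, \beta^\pm, \gamma$ from Section~4.2. The common thread in all three cases is that a cell module $S(n,p)$ has basis the decorated cup diagrams $C_S$ with $S\in M(n-2p)_n$, and such a diagram either has the rightmost dot $n$ isolated or has $\sigma(n)\neq n$. I would use this dichotomy to produce a ${\rm TLD}_{n-1}$-submodule spanned by one class and a quotient spanned by the other. So the first step is: partition $M(\lambda)$ (for the relevant $\lambda$) into $\{S : \sigma(n)=n\}$ and $\{S : \sigma(n)\neq n\}$, and check that the span of the diagrams with $\sigma(n)\neq n$ is stable under the ${\rm TLD}_{n-1}$-action. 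Because the generators $e_{\bar 1},e_1,\dots,e_{n-2}$ of ${\rm TLD}_{n-1}$ act on the left of a cup diagram by concatenation at the top, and the extra vertical strand at position $n$ is untouched, multiplying a diagram with $\sigma(n)\neq n$ can only ever produce diagrams with $\sigma(n)\neq n$ (the arc ending at $n$ cannot be destroyed from the left); conversely acting on a diagram with $n$ isolated can create an arc to $n$, which is why that span is \emph{not} a submodule but the complement is. This is the key structural observation, and verifying it rigorously from the concatenation rules of Figure~\ref{Fig4} is where I would spend the most care.

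\textbf{Case (1), $n>2p+1$.} Here $\lambda=n-2p\geq 2$, so there are genuinely isolated dots, and by the discussion preceding the map $\gamma$ any arc ending at $n$ is undecorated. Let $N\subseteq S(n,p)\!\downarrow$ be the span of $\{C_S : \sigma(n)\neq n\}$. The submodule claim above gives an exact sequence $0\to N\to S(n,p)\!\downarrow\to S(n,p)\!\downarrow/N\to 0$. For the identification $N\cong S(n-1,p)$: deleting the strand at $n$ from a cup diagram with $\sigma(n)\neq n$ is exactly the bijection $\gamma: M(n-2p)_n|_{\sigma(n)\neq n}\to M(n-2p+1)_{n-1}=M((n-1)-2p)_{n-1}$, and one checks the ${\rm TLD}_{n-1}$-action intertwines, so $N\cong S(n-1,p)$ as ${\rm TLD}_{n-1}$-modules. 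For the quotient $S(n,p)\!\downarrow/N\cong S(n-1,p-1)$: the images of $\{C_S : \sigma(n)=n\}$ form a basis of the quotient, deleting the isolated strand at $n$ is a bijection onto $M((n-1)-2(p-1))_{n-1}$, and again the action matches because the only terms that would have mixed $N$ and its complement are killed in the quotient. I would write out one generator's action explicitly to pin down the intertwining and assert the rest by the same mechanism.

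\textbf{Cases (2) and (3).} For (2), $n=2p+1$, so $\lambda=1$ and $M(1)_n$ is the set of $(n,p)$-decorated parenthesis diagrams; now there is exactly one isolated dot. A diagram with $\sigma(n)\neq n$ has the isolated dot strictly left of the arc to $n$, so that arc \emph{can} carry a decoration, and the two parity classes appear—this is precisely why the submodule splits as $S^+(2p,p)\oplus S^-(2p,p)$: delete the strand at $n$, leaving $2p$ dots all paired, i.e.\ an element of $M(0)_{2p}=M^+(0)\sqcup M^-(0)$, the parity being preserved; the two parities span ${\rm TLD}_{2p}$-submodules because the left action cannot change the number of decorations modulo~$2$ (each generator contributes an even number of decorations by the removing rules). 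The quotient, spanned by the images of diagrams with $n$ isolated, maps by strand-deletion to $M((2p)-2(p-1))_{2p}=M(2)_{2p}$, giving $S(2p,p-1)$. For (3), $n=2p$ and $\lambda\in\{0^+,0^-\}$: here \emph{every} diagram has $\sigma(n)\neq n$, so there is no quotient term and the restriction is honestly irreducible-looking on the nose; deleting the strand at $2p$ is the bijection $\beta^+: M^+(0)_{2p}\to M(1)_{2p-1}$ (resp.\ $\beta^-$), and since it preserves the order (Lemma~\ref{4.9}) and intertwines the ${\rm TLD}_{2p-1}$-action, $S^+(2p,p)\!\downarrow\cong S(2p-1,p-1)$ and likewise for $S^-$; composing, $S^+(2p,p)\!\downarrow\cong S(2p-1,p-1)\cong S^-(2p,p)\!\downarrow$. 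The main obstacle throughout is the bookkeeping of decorations under concatenation—making sure that (a) the submodule-stability argument is airtight, and (b) the claimed maps really are module homomorphisms and not just bijections of bases—so I would isolate a single lemma computing $e_i\cdot C_S$ in terms of cup diagrams and then feed it uniformly into all three cases.
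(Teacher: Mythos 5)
Your overall architecture (split the diagram basis of $S(n,p)$ according to whether the dot $n$ is isolated, identify one piece with a smaller cell module by deleting the $n$-th dot, and verify that the identifications are ${\rm TLD}_{n-1}(\delta)$-equivariant) is the same as the paper's, but your key structural claim is exactly backwards, and this is a genuine error, not a presentational one. The span $N$ of $\{C_S:\sigma(n)\neq n\}$ is \emph{not} a ${\rm TLD}_{n-1}(\delta)$-submodule: the arc ending at $n$ \emph{can} be destroyed from the left. Concretely, take $n=5$, $p=1$, let $S$ have the arc $(4,5)$ and isolated dots $1,2,3$, and act by $e_3\in{\rm TLD}_{4}(\delta)$: tracing the concatenation, the dot $5$ of the product is joined through the bottom arc of $e_3$ to the isolated dot $3$ of $S$, so $e_3C_S=C_{S'}$ where $S'$ has the arc $(3,4)$ and isolated dots $1,2,5$ --- a diagram outside $N$ (and no lower-cell term occurs, since the number of isolated dots is unchanged). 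What \emph{is} stable is the opposite class: if $n$ is isolated in $S$, then, because every element of ${\rm TLD}_{n-1}(\delta)$ carries a vertical strand at position $n$, the dot $n$ stays isolated in $aC_S$ up to lower-cell terms. This is precisely why the paper's proof takes the injection $\iota$ (adjoin an isolated dot $n$), whose image --- the span of the diagrams with $\sigma(n)=n$ --- is the submodule, and realizes the quotient on the diagrams with $\sigma(n)\neq n$ via $\varsigma$ (i.e.\ via $\gamma$), checking equivariance as in Figure~\ref{Fig12}.

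Independently of the submodule question, your identifications are combinatorially inconsistent. Deleting dot $n$ from a diagram with $\sigma(n)\neq n$ leaves its partner isolated, so $\gamma$ lands in $M(n-2p+1)_{n-1}$, which has $\binom{n-1}{p-1}$ elements and indexes $S(n-1,p-1)$; your equation $M(n-2p+1)_{n-1}=M((n-1)-2p)_{n-1}$ is false, and $N$ cannot be $S(n-1,p)$, whose rank is $\binom{n-1}{p}$. Dually, deleting an isolated dot $n$ produces an $(n-1,p)$-diagram, so that class indexes $S(n-1,p)$, not $S(n-1,p-1)$. The same inversion recurs in your case (2): for $n=2p+1$ a diagram with $\sigma(n)\neq n$ has its unique isolated dot to the \emph{left} of the arc $(k,n)$, so by Definition~\ref{4.2} that arc can\emph{not} carry a decoration (this is the remark the paper makes before defining $\gamma$), and the parity splitting into $M^{+}(0)\sqcup M^{-}(0)$ comes from the diagrams with $n$ isolated --- which is why $S^{+}(2p,p)\oplus S^{-}(2p,p)$ sits as the \emph{sub}module. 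Your case (3), using $\beta^{\pm}$ and an equivariance check, is in line with the paper. The repair for (1) and (2) is to interchange the roles of the two classes throughout: submodule $=$ span of diagrams with $n$ isolated, identified with $S(n-1,p)$ (resp.\ $S^{+}(2p,p)\oplus S^{-}(2p,p)$) via $\iota$; quotient $=$ diagrams with $\sigma(n)\neq n$, identified with $S(n-1,p-1)$ (resp.\ $S(2p,p-1)$) via $\gamma$; with that swap your equivariance arguments go through as in the paper.
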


\begin{proof}
(1)\,Let $(\sigma, \mathbb{D})$ be an $(n-1, p)$-decorated parenthesis diagram, and assume that
$\sigma=(i_1, j_1)\cdots(i_p, j_p)(k_1)\cdots(k_{n-2p-1})$. Define a map $\iota: S(n-1, p)\rightarrow S(n, p)\downarrow$
by $\iota(\sigma, \mathbb{D})=(\sigma', \mathbb{D})$, where $\sigma'=(i_1, j_1)\cdots(i_p, j_p)(k_1)\cdots(k_{n-2p-1})(n)$.
Clearly, $\iota$ is an injective map. Moreover, it can be checked that $\iota$ is a ${\rm TLD}_{n-1}(\delta)$-module homomorphism.

We now define a map $\varsigma: S(n, p)\downarrow\rightarrow S(n-1, p-1)$ by
$$
\varsigma(\sigma, \mathbb{D})=\begin{cases} 0,& \text{if $n$ is an isolated dot;}\\
\gamma(\sigma, \mathbb{D}),\, &\text{otherwise.}\end{cases}
$$
We have from the definition of $\gamma$ that $\varsigma$ is surjective, and the sequence is an exact sequence of free $R$-modules.
It remains to prove that $\varsigma$ is a ${\rm TLD}_{n-1}(\delta)$-module homomorphism.
Let $(\sigma, \mathbb{D})$ be an $(n, p)$-decorated parenthesis diagram.
If $n$ is an isolated dot, then it is isolated in $a(\sigma, \mathbb{D})$ for each decorated Temperley-Lieb $(n-1)$-diagram $a$. This implies
$\varsigma(a(\sigma, \mathbb{D}))=a\varsigma(\sigma, \mathbb{D})$, and consequently, $\varsigma$ is a ${\rm TLD}_{n-1}(\delta)$-module homomorphism.
Then we only need to consider the case of $\sigma(n)\neq n$. We prove it by illustrating the map $\varsigma$ as follows.
\begin{figure}[H]
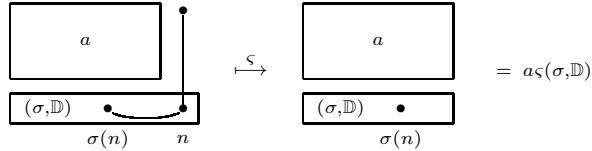

	\[
	\xy
    (-20,0)*{\scriptstyle}="1s";
    (0,0)*{\scriptstyle}="2s";
    (-20,10)*{\scriptstyle}="1d";
    (0,10)*{\scriptstyle}="2d";
    (-10,5)*{\scriptstyle a};
    "1s"; "1d" **\dir{-};
    "1s"; "2s" **\dir{-};
    "1d"; "2d" **\dir{-};
    "2s"; "2d" **\dir{-};
    (-20,-2)*{\scriptstyle}="1f";
    (5,-2)*{\scriptstyle}="2f";
    (-20,-6)*{\scriptstyle}="1g";
    (5,-6)*{\scriptstyle}="2g";
    (-15,-4)*{\scriptstyle (\sigma, \mathbb{D})};
    (3,-4)*{\scriptstyle\bullet}="3f";
    (-7,-4)*{\scriptstyle\bullet}="4f";
    (3,9)*{\scriptstyle\bullet}="3g";
    (3,-8)*{\scriptstyle n};
    (-7,-8)*{\scriptstyle \sigma(n)};
    "1f"; "1g" **\dir{-};
    "1f"; "2f" **\dir{-};
    "1g"; "2g" **\dir{-};
    "2f"; "2g" **\dir{-};
    "3f"; "3g" **\dir{-};
    "4f"; "3f" **\crv{(-6.5,-5.7) & (2.5,-5.7)};
    (12,1)*{\scriptstyle \longmapsto};
    (12,2.5)*{\scriptstyle \varsigma};
    \endxy\,\,\,\,\,\,\,\,
    \xy
    (-20,0)*{\scriptstyle}="1s";
    (0,0)*{\scriptstyle}="2s";
    (-20,10)*{\scriptstyle}="1d";
    (0,10)*{\scriptstyle}="2d";
    (-10,5)*{\scriptstyle a};
    "1s"; "1d" **\dir{-};
    "1s"; "2s" **\dir{-};
    "1d"; "2d" **\dir{-};
    "2s"; "2d" **\dir{-};
    (-20,-2)*{\scriptstyle}="1f";
    (0,-2)*{\scriptstyle}="2f";
    (-20,-6)*{\scriptstyle}="1g";
    (0,-6)*{\scriptstyle}="2g";
    (-15,-4)*{\scriptstyle (\sigma, \mathbb{D})};
    (-7,-4)*{\scriptstyle\bullet}="4f";
    (-7,-8)*{\scriptstyle \sigma(n)};
    "1f"; "1g" **\dir{-};
    "1f"; "2f" **\dir{-};
    "1g"; "2g" **\dir{-};
    "2f"; "2g" **\dir{-};
    (12,1)*{\scriptstyle =\,\,\,a\varsigma(\sigma, \mathbb{D})};
    \endxy
    \]
    \caption{Homomorphism $\varsigma$}
	\label{Fig12}
\end{figure}
(2)\, It is proved similarly to (1). There is only one thing to emphasize here,
which is different from the case of type A, that is, $(2p, p)$-decorated parenthesis diagrams
belong to two different cell modules. This leads to the second term of the exact sequence being
a direct sum of $S^+(2p, p)$ and $S^-(2p, p)$.

(3)\, Take the map from $S^+(2p, p)\downarrow$ to $S(n-1, p-1)$ to be the bijection $\beta^+$.
Note that $\beta^+$ always deletes the decoration of the arc $(k, n)$ of a $(2p, p)$-decorated
parenthesis diagram. Then employ Figure \ref{Fig12} again, we get that
$\beta^+$ is a ${\rm TLD}_{n-1}(\delta)$-module homomorphism.
So $S^+(2p, p)\downarrow\, \cong S(2p-1, p-1)$. Similarly, we obtain that $\beta^-$ is an isomorphism
of  ${\rm TLD}_{n-1}(\delta)$-modules from $S^-(2p, p)\downarrow$ to $S(2p-1, p-1)$.
\end{proof}

\medskip

\section{Gram matrices and semi-simplicity}

Let ${\rm TLD}_n(\delta)$ be a Temperley-Lieb algebra of type D with the
cellular basis given in Section 5. In this section, we will give a necessary and sufficient
condition for ${\rm TLD}_n(\delta)$ being semi-simple by  studying Gram matrices of cell modules.

\smallskip

Let $\lam\in\{0^+, 0^-, 1, 2, \cdots, n\}$ and let $(\sigma_1, \mathbb{D}_1), (\sigma_2, \mathbb{D}_2)\in M(\lam)$
be two arbitrary decorated diagrams.
We construct a diagram $\Omega$ by putting the cap diagram of $(\sigma_1, \mathbb{D}_1)$
on the cup diagram of $(\sigma_2, \mathbb{D}_2)$ such that the dots with the same label coincide, then
$\Phi_{\lam}((\sigma_1, \mathbb{D}_1), (\sigma_2, \mathbb{D}_2))$ is completely determined
by the structure of $\Omega$ according to the definition of $\Phi_{\lam}$.

\begin{lem}\label{6.1}  Keep notations as above, then
$$\Phi_{\lam}((\sigma_1, \mathbb{D}_1), (\sigma_2, \mathbb{D}_2))=\left\{
  \begin{array}{ll}
    0, & \hbox{if $\Omega$ contains a closed circuit with odd decorations,}\\
       & \hbox{or the sum of the number of isolated dots and the} \\
       & \hbox{number of non-closed curves less than $\lam (\neq 0^+, 0^-)$};\\
\\
    \delta^{c}, & \hbox{otherwise,}
  \end{array}
\right. $$ where $c$ is the number of circuits in $\Omega$.
\end{lem}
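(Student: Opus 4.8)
The plan is to read off $\Phi_{\lam}$ directly from its definition: for any fixed $S,V\in M(\lam)$ one has
$$C_{S,(\sigma_1,\mathbb{D}_1)}^{\lam}\,C_{(\sigma_2,\mathbb{D}_2),V}^{\lam}\equiv \Phi_{\lam}\bigl((\sigma_1,\mathbb{D}_1),(\sigma_2,\mathbb{D}_2)\bigr)\,C_{S,V}^{\lam}\,\,\,\mathrm{mod}\,\,\,A(<\lam),$$
so it suffices to identify, up to a scalar, the decorated Temperley--Lieb $n$-diagram obtained by concatenating these two basis diagrams and to decide whether it equals $C_{S,V}^{\lam}$ or lies in $A(<\lam)$. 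First I would observe that this concatenation is, read from top to bottom, the cup diagram of $S$, then the cap diagram of $(\sigma_1,\mathbb{D}_1)$ glued onto the cup diagram of $(\sigma_2,\mathbb{D}_2)$ --- that is, precisely $\Omega$ --- then the cap diagram of $V$, together with at most one extra decoration on a leftmost vertical strand contributed by the normalization of each factor. In particular every circuit of the concatenated diagram already lies inside $\Omega$, and when $\lam\neq 0^+,0^-$ the two boundaries of $\Omega$ carry exactly $\lam$ free endpoints each, namely the isolated dots of $\sigma_1$, resp.\ of $\sigma_2$; when $\lam\in\{0^+,0^-\}$ there are no free endpoints.

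Next I would reduce the concatenated diagram using the removing rules of Figure~\ref{Fig4}. A circuit of $\Omega$ with an even number of decorations loses them in pairs by the second rule and is then replaced by $\delta$ by the first rule, so the $c$ circuits together produce a factor $\delta^{c}$ in the favourable case. A circuit with an odd number of decorations cannot be removed, so after reduction the diagram still contains a decorated circuit; by Lemma~\ref{3.3} it is then a diagram of the first type, i.e.\ one of the basis elements $C_{U,V'}^{\dot{k}}$, and since $\dot{k}$ lies below $\lam$ in $\Lambda$ this diagram belongs to $A(<\lam)$, whence $\Phi_{\lam}=0$. The delicate point here --- that the reduced diagram carries a decorated circuit exactly when $\Omega$ has a circuit with an odd number of decorations, and that several decorated circuits cannot recombine into an undecorated one --- is where the constraints on decoration sets from Definition~\ref{4.2} (decorated arcs are exposed to the left and are not nested inside one another) must be used, and I expect this to be the main obstacle.

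It remains to treat the case in which no circuit of $\Omega$ is odd-decorated. Collapsing the $c$ circuits leaves $\delta^{c}$ times a diagram $D$ of the second type (and $C_{S,V}^{\lam}$ is of the second type too, since $\lam\neq\dot{k}$). The through-strands of $D$ are exactly the components of $\Omega$ joining a free endpoint on its top boundary to one on its bottom boundary, a dot isolated in both $\sigma_1$ and $\sigma_2$ being counted as a degenerate such component; this number is the sum of the number of isolated dots of $\Omega$ and the number of its non-closed curves, and it is at most $\lam$. If it is strictly less than $\lam$, then $D$ is a second-type basis diagram labelled by some $\mu<\lam$ in $\Lambda$, hence $D\in A(<\lam)$ and $\Phi_{\lam}=0$. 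If it equals $\lam$, then $\Omega$ has no component joining its top boundary to itself or its bottom boundary to itself, so the top parenthesis diagram of $D$ is $S$ --- its decorated arcs sit at the very top of the concatenation and are untouched by the reductions --- and the bottom one is $V$; since the first two removing rules preserve the parity of the total number of decorations and each factor carries an even total (for $\lam\in\{1,\dots,n\}$ because of the normalization, and for $\lam\in\{0^+,0^-\}$ because both factors lie in the same $M(0^{\pm})$), $D$ carries an even total number of decorations, which together with its top and bottom parenthesis diagrams pins down the normalized decoration on its leftmost through-strand and forces $D=C_{S,V}^{\lam}$. Hence $\Phi_{\lam}=\delta^{c}$, and in the subcase $\lam\in\{0^+,0^-\}$ the only remaining source of vanishing is an odd-decorated circuit of $\Omega$, as claimed.
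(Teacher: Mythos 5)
The paper contains no proof of Lemma \ref{6.1}: it is asserted as an immediate consequence of the definition of $\Phi_{\lambda}$ and the removing rules of Figure \ref{Fig4}, so your direct diagrammatic verification supplies exactly the argument that was left implicit, and in substance it is correct. Two remarks. First, the point you flag as the main obstacle is actually immediate: rule 2 deletes decorations in pairs on a single curve, and rule 3 --- the only other way to delete a decoration --- both requires and preserves a decorated circuit; hence if some circuit of $\Omega$ carries an odd number of decorations, at least one decorated circuit survives every reduction, the product is a diagram of the first type, and it lies in $A(<\lambda)$ because every $\dot{k}$ sits below $\lambda$ in $\Lambda$; no appeal to Definition \ref{4.2} is needed there. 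Second, your identification of the number of through-strands of the reduced diagram with ``the number of isolated dots plus the number of non-closed curves'' of $\Omega$ is valid only if ``non-closed curve'' is read as a curve joining a dot isolated in $\sigma_1$ to a dot isolated in $\sigma_2$: since both diagrams lie in $M(\lambda)$, the literal count (doubly isolated dots plus all open components) always equals $\lambda$, whereas a turnback component joining two dots isolated on the same side makes the through-strand number drop below $\lambda$ and forces $\Phi_{\lambda}=0$ (this is what produces the zero entries away from the tridiagonal band in $G(n,1)$ in the proof of Lemma \ref{6.3}). You have inherited this imprecision from the wording of the lemma itself; your operative criterion --- the product falls into $A(<\lambda)$ exactly when the number of components of $\Omega$ joining the two boundaries, counting doubly isolated dots as degenerate such components, is smaller than $\lambda$ --- is the correct one, and with it the rest of your argument (no decorated circuits left in the favourable case, parity of the total number of decorations pinning down the normalization, so the reduced diagram is exactly $C_{S,V}^{\lambda}$ times $\delta^{c}$) goes through.
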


\begin{example} Taking diagrams (3) and (4) of Figure \ref{Fig10}, we get
\begin{figure}[H]
	\[
	\xy
    (30,0)*{\scriptstyle \Omega\,\,\,=};
    (35,0)*{\scriptstyle\circ}="6d";
    (45,0)*{\scriptstyle\circ}="7d";
    (55,0)*{\scriptstyle\circ}="8d";
    (60,2.6)*{\scriptstyle\bullet};
    (65,0)*{\scriptstyle\circ}="9d";
    (75,0)*{\scriptstyle\circ}="10d";
    "6d"; "7d" **\crv{(36,-3.5) & (44,-3.5)};
    "6d"; "7d" **\crv{(36,3.5) & (44,3.5)};
    "8d"; "9d" **\crv{(56,-3.5) & (64,-3.5)};
    "8d"; "9d" **\crv{(56,3.5) & (64,3.5)};
\endxy
    \]
\end{figure}
We have from Lemma \ref{6.1} that the (3, 4)-entry of $G(5, 2)$ is 0
because $\Omega$ contains a circuit with one decoration.

Taking diagrams (8) and (9) of Figure \ref{Fig10}, we get
\begin{figure}[H]
	\[
	\xy
    (30,0)*{\scriptstyle \Omega\,\,\,=};
    (35,0)*{\scriptstyle\circ}="6d";
    (45,0)*{\scriptstyle\circ}="7d";
    (55,0)*{\scriptstyle\circ}="8d";
    (65,0)*{\scriptstyle\circ}="9d";
    (75,0)*{\scriptstyle\circ}="10d";
    "7d"; "8d" **\crv{(46,-3.5) & (54,-3.5)};
    "6d"; "7d" **\crv{(36,3.5) & (44,3.5)};
    "9d"; "10d" **\crv{(66,-3.5) & (74,-3.5)};
    "9d"; "10d" **\crv{(66,3.5) & (74,3.5)};
\endxy
    \]
\end{figure}
Clearly, there is no decorated circuit in $\Omega$. The sum of the number of
isolated dots and the number of non-closed curves is 1, which is equal to $\lambda$.
Then by Lemma \ref{6.1} the (8, 9)-entry of $G(5, 2)$ is $\delta$ because the number of circuits in $\Omega$ is 1.
Similarly, we can obtain all entries of $G(5, 2)$.

$$G(5, 2)=
\left(\begin{array}{cccccccccc}
  \delta^2 & 0 & 0 & 0 & 0 & \delta & \delta & 0 & 1 & 0  \\
  0 & \delta^2 & 0 & 0 & \delta & 0 & \delta & 0 & 1 & \delta  \\
  0 & 0 & \delta^2 & 0 & \delta & 0 & 0 & \delta & 1 & 0  \\
  0 & 0 & 0 & \delta^2 & 0 & \delta & 0 & \delta & 1 & \delta  \\
  0 & \delta & \delta & 0& \delta^2 & 0 & 1 & 1 & \delta & 1  \\
  \delta & 0 & 0 & \delta & 0 & \delta^2 & 1 & 1 & \delta & 1  \\
  \delta & \delta & 0 & 0 & 1 & 1 & \delta^2 & 0 & \delta & 1  \\
  0 & 0 & \delta & \delta & 1 & 1 & 0 & \delta^2 & \delta & 1  \\
  1 & 1 & 1 & 1 & \delta & \delta & \delta & \delta & \delta^2 & \delta  \\
  0 & \delta & 0 & \delta & 1 & 1 & 1 & 1 & \delta  & \delta^2  \\
 \end{array}
\right)
$$
\end{example}

Let us first compute Gram determinants  for some special cases.
Recall that the Chebychev polynomials of the second kind are a sequence of polynomials
with integer coefficients and are determined by the recurrence relation
$$P_0(x)=0 \,\,\,\,P_1(x)=1 \,\,\,\,P_{n+1}(x)=xP_n(x)-P_{n-1}(x)$$
In \cite[Section 5]{W2}, Westbury proved the following result about the
Gram matrices of cell modules of a Temperley-Lieb algebra of type A.
\begin{lem}\cite{W2}\label{6.2}
The determinant of the Gram matrix $\mathcal{G}(n, 1)$ is $P_n(\delta)$.
\end{lem}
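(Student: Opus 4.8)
The plan is to show that, with a suitable ordering of its index set, $\mathcal{G}(n,1)$ is the tridiagonal $(n-1)\times(n-1)$ matrix having $\delta$ on the main diagonal and $1$ on the two off-diagonals, and then to recognise its determinant as a Chebyshev polynomial via the tridiagonal recursion. The first step is to identify $\mathcal{M}(n-2)$, the index set of $\mathcal{S}(n,1)$: an $(n,1)$-parenthesis diagram has exactly one pair $(a,b)$ with $a<b$ and $n-2$ isolated dots, and condition~(1) of Definition~\ref{4.1} forbids an isolated dot lying strictly between $a$ and $b$; hence $b=a+1$, so $\mathcal{M}(n-2)=\{\sigma_a:=(a,a+1)\mid a=1,\dots,n-1\}$ and $|\mathcal{M}(n-2)|=n-1$. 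I would order this set as $\sigma_1<\cdots<\sigma_{n-1}$ (the order of Definition~\ref{4.5}, although $\det\mathcal{G}(n,1)$ does not depend on the choice).

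Next I would compute $\Phi(\sigma_a,\sigma_b)$ by stacking the cap diagram of $\sigma_a$ on the cup diagram of $\sigma_b$ and counting through-strands and closed loops, as in the decoration-free specialization of Lemma~\ref{6.1} (equivalently, directly from the definition of the Gram form). Three cases exhaust the possibilities: if $a=b$ the two arcs coincide, creating one closed loop while the remaining $n-2$ dots pass straight through, so $\Phi(\sigma_a,\sigma_b)=\delta$; if $|a-b|=1$ the arcs share one endpoint and merge into a single curve through three consecutive dots, still leaving $n-2$ through-strands and no loop, so $\Phi(\sigma_a,\sigma_b)=1$; if $|a-b|\ge 2$ the arc of $\sigma_a$ joins two dots that are isolated in $\sigma_b$ (and vice versa), which caps off a pair of would-be through-strands so that their number drops below $n-2$, whence $\Phi(\sigma_a,\sigma_b)=0$. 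This identifies $\mathcal{G}(n,1)$ with the announced tridiagonal matrix.

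Finally, writing $D_m$ for the determinant of the $m\times m$ matrix of this shape, expansion along the last column yields $D_m=\delta D_{m-1}-D_{m-2}$ with $D_1=\delta$, $D_2=\delta^2-1$, and, consistently, $D_0=1$. Comparing with $P_0(x)=0$, $P_1(x)=1$, $P_{m+1}(x)=xP_m(x)-P_{m-1}(x)$ gives $D_0=1=P_1(\delta)$ and $D_1=\delta=P_2(\delta)$, and a one-line induction then shows $D_m=P_{m+1}(\delta)$ for all $m\ge 0$; taking $m=n-1$ gives $\det\mathcal{G}(n,1)=P_n(\delta)$.

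I expect the only step needing genuine care to be the vanishing in the case $|a-b|\ge 2$: one must argue cleanly that two ``far apart'' simple arcs force the superimposed diagram to cap (equivalently, to cup) a pair of would-be through-strands, so that the through-strand count really does fall below $n-2$. Everything else — the enumeration of $\mathcal{M}(n-2)$, the two nonzero entry types, and the determinant recursion — is routine, the only minor pitfall being to keep track of the index shift between the matrix size $n-1$ and the subscript $n$ of $P_n$ together with the base values $D_0,D_1$.
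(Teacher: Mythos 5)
Your proof is correct. Note that the paper itself does not prove this statement at all: it is quoted from Westbury \cite{W2}, so there is no in-paper argument to compare with directly. However, your route is exactly the one the authors use for the type~D analogue (Lemma~\ref{6.3}): identify the index set, observe that the Gram matrix is tridiagonal (there, tridiagonal up to a small corner block), and recover the Chebyshev recursion by expanding the determinant. Your treatment of the entries is sound: $\mathcal{M}(n-2)$ consists precisely of the arcs $(a,a+1)$ by condition (1) of Definition~\ref{4.1}, the cases $a=b$ and $|a-b|=1$ give $\delta$ and $1$, and for $|a-b|\ge 2$ the superimposed middle diagram produces one ``up--up'' and one ``down--down'' open curve, so the product has fewer than $n-2$ through-strands, lies in $A(<\lambda)$, and the form vanishes. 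In fact your through-strand count here is the right argument and is more reliable than a literal appeal to the wording of Lemma~\ref{6.1}, whose ``isolated dots plus non-closed curves $<\lambda$'' clause is only meaningful if the non-closed curves are understood as those joining a dot isolated in the top diagram to one isolated in the bottom diagram; your case analysis makes this precise. The index bookkeeping $D_{n-1}=P_n(\delta)$ with $D_0=1=P_1$, $D_1=\delta=P_2$ is also correct.
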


Furthermore, Westbury \cite{W2} proved the following recurrence relation on Gram determinants of cell modules.
\begin{lem} \label{as}
If $0<2p<n$, then
$$\det \mathcal{G}(n, p)=
    \det \mathcal{G}(n-1, p)\left(\frac{P_{n-2p+2}}{P_{n-2p+1}}\right)^{{\left(\smallmatrix n-1 \\
p-1 \endsmallmatrix \right)}-{\left(\smallmatrix n-1 \\
p-2 \endsmallmatrix \right)}}\det \mathcal{G}(n-1, p-1).$$
If $n=2p$, then $$\det \mathcal{G}(n, p)=
    \delta^{{\left(\smallmatrix n-1 \\
p-1 \endsmallmatrix \right)}-{\left(\smallmatrix n-1 \\
p-2 \endsmallmatrix \right)}}\det \mathcal{G}(n-1, p-1).$$
\end{lem}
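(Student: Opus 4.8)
The plan is to prove the recurrence by restricting the cell module $\mathcal{S}(n,p)$ to $\mathrm{TL}_{n-1}(\delta)$ and reading the Gram matrix off a basis adapted to the resulting filtration. Split the basis $\mathcal{M}(n,p)$ into the set $A$ of those $(n,p)$-parenthesis diagrams in which the dot $n$ is isolated and the set $B$ of those in which $n$ lies on an arc $(k,n)$. Exactly as in the decoration-free type A analogue of Lemma~\ref{5.2}(1), the span of $A$ is a $\mathrm{TL}_{n-1}(\delta)$-submodule isomorphic to $\mathcal{S}(n-1,p)$ (erase the isolated dot $n$) and the corresponding quotient module is $\mathcal{S}(n-1,p-1)$, the quotient map sending $b\in B$ to $\gamma(b)$ and killing $A$. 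Ordering the basis as "$A$ then $B$" puts the Gram matrix in the block form
\[
\mathcal{G}(n,p)=\begin{pmatrix}\mathcal{G}(n-1,p)&X\\ X^{\mathrm t}&Y\end{pmatrix},
\]
where the top-left block really is $\mathcal{G}(n-1,p)$: for $a_1,a_2\in A$ the glued diagram turns the dot $n$ into a single through-strand and otherwise coincides with the glued diagram of the associated pair in $\mathcal{M}(n-1,p)$, so by Lemma~\ref{6.1} (in type A, hence with no decorations) the nonvanishing criterion and the circuit count for the two agree.

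Assume first $0<2p<n$ and work over the field $K=\mathbb{Q}(\delta)$, over which $\mathrm{TL}_{n-1}(\delta)$ is semisimple (each cell module has $\delta^{p}$ down the diagonal of its Gram matrix and strictly smaller $\delta$-degree off it, so the determinant is a nonzero polynomial) and $\mathcal{G}(n-1,p)$ is invertible. The Schur complement formula gives $\det\mathcal{G}(n,p)=\det\mathcal{G}(n-1,p)\cdot\det S$ with $S:=Y-X^{\mathrm t}\mathcal{G}(n-1,p)^{-1}X$. Since $\mathcal{G}(n-1,p)$ is non-degenerate, the $\Phi$-orthogonal complement of $\operatorname{span}A$ in $\mathcal{S}(n,p)\downarrow$ is a $\mathrm{TL}_{n-1}(\delta)$-submodule mapping isomorphically onto $\mathcal{S}(n-1,p-1)$, and $S$ is precisely the Gram matrix of the invariant symmetric form obtained by transporting $\Phi$ to this complement, written in the basis matching, via $\gamma$, the standard basis of $\mathcal{S}(n-1,p-1)$. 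Over $K$ the module $\mathcal{S}(n-1,p-1)$ is irreducible and self-dual, so it carries a one-dimensional space of invariant bilinear forms; hence $S=c\,\mathcal{G}(n-1,p-1)$ for a unique $c\in K$, and $\det S=c^{\,e}\det\mathcal{G}(n-1,p-1)$ with $e=\binom{n-1}{p-1}-\binom{n-1}{p-2}=\dim\mathcal{S}(n-1,p-1)$. It then remains to pin down $c$; for this I would evaluate $S$ on one convenient pair of basis vectors (e.g.\ the diagram in $B$ whose only arc meeting $\{k,\dots,n\}$ is $(k,n)$ with $k$ as small as planarity allows), so that the corresponding diagonal entry of $S$ reduces to a single entry of $\mathcal{G}(n-1,p)^{-1}$; Cramer's rule then writes that entry as a ratio of type A Gram determinants, and Lemma~\ref{6.2} together with the Chebychev recurrence $P_{m+1}=\delta P_{m}-P_{m-1}$ collapses it to $c=P_{n-2p+2}/P_{n-2p+1}$ (note that $n-2p+1$ and $n-2p+2$ are the values of "$\lambda$" and "$\lambda+1$" attached to $\mathcal{S}(n-1,p-1)$).

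For the case $n=2p$ one has $\lambda=n-2p=0$, so $A=\varnothing$ and $\gamma$ is a bijection $\mathcal{M}(2p,p)\to\mathcal{M}(2p-1,p-1)$; here a direct diagram argument suffices. For $u,v\in\mathcal{M}(2p,p)$ every component of the glued diagram $\Omega(u,v)$ is a closed loop, and the loop through the dot $2p$ — which carries the segment joining the arc $(k_1,2p)$ of $u$ to the arc $(k_2,2p)$ of $v$ — becomes, after applying $\gamma$ to both $u$ and $v$, precisely the unique open through-strand of $\Omega(\gamma u,\gamma v)$, while every other loop is unchanged. Hence $\mathcal{G}(2p,p)_{u,v}=\delta\cdot\mathcal{G}(2p-1,p-1)_{\gamma u,\gamma v}$ entrywise, and taking determinants gives $\det\mathcal{G}(2p,p)=\delta^{\,e}\det\mathcal{G}(2p-1,p-1)$ with $e=\binom{n-1}{p-1}-\binom{n-1}{p-2}$, as claimed.

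The step I expect to be the real obstacle is the evaluation of the scalar $c$ in the case $0<2p<n$: extracting the relevant entry of $\mathcal{G}(n-1,p)^{-1}$ in closed form — equivalently, getting the cofactor-to-determinant ratio of type A Gram matrices to telescope into $P_{n-2p+2}/P_{n-2p+1}$ — is where the Chebychev polynomials genuinely enter, and it is cleanest to carry it inside an induction on $(n,p)$ that simultaneously transports the determinant formula itself. The remaining ingredients (the branching isomorphisms, the identification of the top-left block, and the $n=2p$ computation) are routine manipulations of planar diagrams.
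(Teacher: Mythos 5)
Your overall strategy---restrict to $\mathrm{TL}_{n-1}(\delta)$, split the diagram basis according to whether $n$ is isolated, identify the top-left block of the Gram matrix with $\mathcal{G}(n-1,p)$, pass to $\mathbb{Q}(\delta)$ where the algebra is semisimple, and use Schur's lemma to write the Schur complement as $c\,\mathcal{G}(n-1,p-1)$ with exponent $e=\binom{n-1}{p-1}-\binom{n-1}{p-2}=\dim\mathcal{S}(n-1,p-1)$---is sound, and it is essentially the method this paper uses for its type D analogue (Lemma \ref{6.10}, following \cite{RS}); the type A statement itself is only quoted here from Westbury \cite{W2}. Your $n=2p$ case is correct and complete: the entrywise identity $\mathcal{G}(2p,p)_{u,v}=\delta\,\mathcal{G}(2p-1,p-1)_{\gamma u,\gamma v}$ is exactly the right argument (it is the type A shadow of Lemmas \ref{6.5} and \ref{6.8}), and the sizes match because $\gamma$ is a bijection.

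The genuine gap is the determination of the scalar $c$, which is precisely where the Chebychev ratio $P_{n-2p+2}/P_{n-2p+1}$ must come from; you explicitly defer it, and the route you sketch does not work as stated. A diagonal entry of $S=Y-X^{\mathrm t}\mathcal{G}(n-1,p)^{-1}X$ indexed by $b\in B$ is $Y_{bb}$ minus the quadratic form of the \emph{whole} column $X_{\cdot\,b}$ against $\mathcal{G}(n-1,p)^{-1}$; a fixed $b$ pairs nontrivially with every $a\in A$ whose gluing with $b$ keeps $n-2p$ through-curves, and these are generally many, so the entry does not reduce to ``a single entry of $\mathcal{G}(n-1,p)^{-1}$'', no choice of $b$ making the column of $X$ a standard basis vector is exhibited, and the asserted collapse via Cramer's rule and Lemma \ref{6.2} to $P_{n-2p+2}/P_{n-2p+1}$ is unproved. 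The paper closes the corresponding gap in type D differently (Lemma \ref{6.10} together with Lemma \ref{6.13}): writing the splitting of the restriction (the type A analogue of Lemma \ref{5.2}) as a unipotent change of basis $V(n,p)$ and comparing the resulting factorization of the Gram matrix with its internal block structure (the analogue of Lemma \ref{6.7}, where $G(n,p)_{12}$ and $G(n,p)_{22}$ contain $G(n-2,p-1)$ and $\delta G(n-2,p-1)$), one obtains two expressions for the block $X(n,p)_{21}$, hence the recursion $r(n,p)=\delta-\frac{1}{r(n-1,p)}$, and then a separate base-case computation at $n=2p+1$ (in type A this gives $r(2p+1,p)=\delta-\frac{1}{\delta}=P_3/P_2$), from which induction yields $r(n,p)=P_{n-2p+2}/P_{n-2p+1}$. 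To complete your argument you should replace the single-entry evaluation by such a block comparison, or spell out the simultaneous induction on $(n,p)$ you allude to; as written, the central identity of the lemma is asserted rather than proved.
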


In order to study the Gram matrices of cell modules of ${\rm TLD}_n(\delta)$,
we first generalize Chebychev polynomials to type D.
For $n\geq 3$, let $$Q_2(x)=x^2\,\,\,\,\,Q_3(x)=x^3-2x\,\,\,\,\,\,\,Q_{n+1}(x)=xQ_n(x)-Q_{n-1}(x).$$
We obtain a new sequence of polynomials with integer coefficients,
which will be called Chebychev polynomials of type D.
Then we have a result similar to the case of type A.
\begin{lem}\label{6.3}
The determinant of the Gram matrix $ G(n, 1)$ is $Q_n(\delta)$ for $n\geq 3$.
\end{lem}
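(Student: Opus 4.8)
The plan is to make the cell module $S(n,1)$ and its Gram matrix $G(n,1)$ completely explicit, observe that $G(n,1)$ differs from the type~A tridiagonal Gram matrix $\mathcal{G}(n,1)$ only in a small corner coming from the unique decorated diagram, and then compute $\det G(n,1)$ by a cofactor expansion that reduces everything to the type~A determinants $P_k(\delta)$ supplied by Lemma~\ref{6.2}. First I would pin down the index set $M(n-2)_n$: since $p=1$, a parenthesis diagram has one arc and $n-2$ isolated dots, and condition~(1) of Definition~\ref{4.1} forces that arc to join consecutive dots, so the undecorated diagrams are $\sigma_i$ with arc $(i,i+1)$ for $1\le i\le n-1$, while by Definition~\ref{4.2} a decorated diagram must have all its isolated dots to the right of its arc, forcing the single diagram $\sigma_0$ with decorated arc $(1,2)$. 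This accounts for all $n=\binom{n}{1}$ elements (Lemma~\ref{4.4}), and from Definition~\ref{4.5} the associated sequences are $(i{+}1,\infty)$ for $\sigma_i$ and $(2,2)$ for $\sigma_0$, so the order is $\sigma_0<\sigma_1<\sigma_2<\cdots<\sigma_{n-1}$.

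Next I would read off every entry $\Phi(\sigma_a,\sigma_b)$ from Lemma~\ref{6.1} by inspecting the glued diagram $\Omega$. For $a,b\ge 1$ the decorations are irrelevant and $\Omega$ is exactly the type~A picture: the entry is $\delta$ if $a=b$, is $1$ if $|a-b|=1$, and is $0$ if $|a-b|\ge 2$ (in that case the cap arc $(a,a+1)$ joins two dots that are isolated in $\sigma_b$, so fewer than $\lambda=n-2$ non-closed curves survive); thus the submatrix on $\{\sigma_1,\dots,\sigma_{n-1}\}$ is precisely $\mathcal{G}(n,1)$. For the $\sigma_0$-row: $\Phi(\sigma_0,\sigma_0)=\delta$ because the two decorated arcs close to one circuit carrying two decorations, which cancel; $\Phi(\sigma_0,\sigma_1)=0$ because they close to a circuit with a single decoration; $\Phi(\sigma_0,\sigma_2)=1$ because here the decorated arc lies on a \emph{non-closed} curve, so the odd-decoration clause of Lemma~\ref{6.1} does not apply and the value is $\delta^0$; and $\Phi(\sigma_0,\sigma_j)=0$ for $j\ge 3$, again because the decorated cap arc $(1,2)$ joins two dots isolated in $\sigma_j$. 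Hence $G(n,1)$ is the $n\times n$ matrix, indexed by $0,1,\dots,n-1$, that is tridiagonal with diagonal $\delta$ and off-diagonals $1$, modified only so that the $(0,1),(1,0)$ entries are $0$ and the $(0,2),(2,0)$ entries are $1$.

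Then I would compute the determinant. Expanding along row $0=(\delta,0,1,0,\dots,0)$ gives $\det G(n,1)=\delta M_{00}+M_{02}$, where $M_{0j}$ is the minor obtained by deleting row $0$ and column $j$ (the sign $(-1)^{0+j}$ being $+1$ in both cases). The first minor is exactly the Gram matrix $\mathcal{G}(n,1)$ on $\{\sigma_1,\dots,\sigma_{n-1}\}$, so $M_{00}=P_n(\delta)$ by Lemma~\ref{6.2}. In the second, the $\sigma_1$-row has the single nonzero entry $\delta$; expanding along it, and then along the $\sigma_2$-row (which now has the single nonzero entry $1$, in the $\sigma_0$-column, since $\Phi(\sigma_j,\sigma_0)=0$ for $j\ge 3$), leaves the tridiagonal matrix on $\{\sigma_3,\dots,\sigma_{n-1}\}$, i.e.\ $\mathcal{G}(n-2,1)$; tracking the two signs gives $M_{02}=-\delta P_{n-2}(\delta)$. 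Thus $\det G(n,1)=\delta\bigl(P_n(\delta)-P_{n-2}(\delta)\bigr)$. Finally I would match this with $Q_n(\delta)$: both $n\mapsto\delta(P_n-P_{n-2})$ and $n\mapsto Q_n$ satisfy the recurrence $f_{n+1}=\delta f_n-f_{n-1}$ (the former because each $P_k$ does), and they agree for $n=2$ (both equal $\delta^2$) and $n=3$ (both equal $\delta^3-2\delta$), hence for all $n\ge 2$; in particular $\det G(n,1)=Q_n(\delta)$ for $n\ge 3$.

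The step I expect to require the most care is making the Gram matrix precise: correctly enumerating $M(n-2)_n$ together with its order, and nailing down the $\sigma_0$-row — the contrast between $\Phi(\sigma_0,\sigma_2)=1$ and $\Phi(\sigma_0,\sigma_j)=0$ for $j\ge 3$, and the distinct effects of the parity of decorations on closed versus non-closed curves, all hinge on a careful application of Lemma~\ref{6.1}. Once the matrix is fixed the determinant evaluation is routine; as an alternative to the cofactor computation one can instead expand $\det G(n,1)$ along its last row to obtain the recurrence $\det G(n,1)=\delta\det G(n-1,1)-\det G(n-2,1)$ for $n\ge 5$ and then conclude by checking the base cases $n=3,4$ against $Q_3$ and $Q_4$.
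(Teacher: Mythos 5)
Your proof is correct, and the Gram matrix you construct is exactly the one the paper writes down: your enumeration of $M(n-2)_n$, the ordering $\sigma_0<\sigma_1<\cdots<\sigma_{n-1}$, and every entry agree with the displayed matrix in the paper's proof (which the paper asserts without the entry-by-entry justification you supply). Where you diverge is in evaluating the determinant: the paper expands along the \emph{bottom} row to obtain the three-term recurrence $\det G(n,1)=\delta\det G(n-1,1)-\det G(n-2,1)$ and matches it against the defining recurrence of $Q_n$ (with the convention $G_2=\left(\begin{smallmatrix}\delta&0\\0&\delta\end{smallmatrix}\right)$ and the check $\det G(3,1)=Q_3$), which is precisely the ``alternative'' you sketch at the end; your primary route instead expands along the decorated row and reduces to type A via Lemma \ref{6.2}, yielding $\det G(n,1)=\delta\bigl(P_n(\delta)-P_{n-2}(\delta)\bigr)$ and then the identity $Q_n=\delta(P_n-P_{n-2})$. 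Your route buys an explicit type A/type D comparison of the Chebychev-type polynomials and reuses Westbury's result, at the cost of one extra identification step; the paper's route is self-contained. One small correction in your computation of $M_{02}$: after expanding along the $\sigma_1$-row, the $\sigma_2$-row still has \emph{two} nonzero entries, namely $\Phi(\sigma_2,\sigma_0)=1$ and $\Phi(\sigma_2,\sigma_3)=1$; what has a single nonzero entry is the $\sigma_0$-\emph{column} (exactly because $\Phi(\sigma_j,\sigma_0)=0$ for $j\ge 3$, as you yourself cite), so the second expansion should be along that column --- the resulting minor, the signs, and the value $M_{02}=-\delta P_{n-2}(\delta)$ are unchanged, so this is a wording slip rather than a gap.
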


\begin{proof}
Note that the cell module $S(n, 1)$ has a basis that consists of $(n, 1)$-decorated parenthesis diagrams.
If these diagrams arranged in the order of Definition \ref{4.5}, the Gram matrix $G(n, 1)$ is
$$
\left(\begin{array}{ccccccc}
  \delta & 0 & 1 & 0 & 0 & \cdots & 0 \\
  0 & \delta & 1 & 0 & 0 & \cdots & 0 \\
  1 & 1 & \delta & 1 & 0 & \cdots & 0 \\
  0 & 0 & 1 & \delta & 1 & \cdots & 0 \\
  \cdots & \cdots & \cdots & \cdots & \cdots & \cdots & \cdots \\
  0 & 0 & \cdots & 0 & 1 & \delta & 1 \\
  0 & 0 & \cdots & 0 & 0 & 1 & \delta \\
 \end{array}
\right)
$$
for $n>2$. Clearly, $\det G(3, 1)=\delta^3-2\delta=Q_3(\delta)$.

Define $G_2=\left(\begin{array}{cc}\delta & 0 \\0 & \delta \end{array}\right)$.
Expanding the determinant of $G(n, 1)$ along the bottom row gives that
$$\det G(n, 1)=\delta\det G(n-1, 1)-\det G(n-2, 1)$$ for $n\geq 4$. That is, the polynomials $\det G(n, 1)$
satisfy the recurrence relation that defines $Q_n(x)$.
\end{proof}

The specific characteristic of the cell module $S(n, 1)$ is that each basis element has only one arc.
Let us consider another two kinds of special cell modules,  $S^+(2p, p)$ and $S^-(2p, p)$.
The basis elements of these cell modules have no isolated dots.
Fortunately, for arbitrary fixed $p$, the Gram matrices of the two cell modules are the same.

\begin{lem}\label{6.4}
$G^+(2p, p)=G^-(2p, p).$
\end{lem}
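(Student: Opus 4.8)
The plan is to carry the entire comparison through the bijection $\alpha\colon M^+(0)\to M^-(0)$ constructed in Section~4. Order the basis of $S^+(2p,p)$, which is indexed by $M^+(0)$, and the basis of $S^-(2p,p)$, which is indexed by $M^-(0)$, by the total order of Definition~\ref{4.5}. Since $\alpha$ is a bijection and, by Lemma~\ref{4.8}, preserves that order, it matches the $i$-th element of $M^+(0)$ with the $i$-th element of $M^-(0)$ for every $i$. Consequently $G^+(2p,p)=G^-(2p,p)$ is equivalent to the assertion that
$$\Phi_{0^+}((\sigma_1,\mathbb{D}_1),(\sigma_2,\mathbb{D}_2))=\Phi_{0^-}(\alpha(\sigma_1,\mathbb{D}_1),\alpha(\sigma_2,\mathbb{D}_2))$$
for all $(\sigma_1,\mathbb{D}_1),(\sigma_2,\mathbb{D}_2)\in M^+(0)$, and this is what I would establish.

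To evaluate the two sides I would apply Lemma~\ref{6.1}. Let $\Omega$ be the diagram obtained by placing the cap diagram of $(\sigma_1,\mathbb{D}_1)$ on the cup diagram of $(\sigma_2,\mathbb{D}_2)$, and let $\Omega'$ be the analogous diagram built from $\alpha(\sigma_1,\mathbb{D}_1)$ and $\alpha(\sigma_2,\mathbb{D}_2)$. Since $\alpha$ alters only the decoration sets and never the underlying involutions, $\Omega$ and $\Omega'$ have exactly the same underlying curves; in particular they share the same number $c$ of circuits, and because $n=2p$ there are no isolated dots, so each of $\Omega,\Omega'$ is a disjoint union of $c$ closed circuits with no non-closed curves. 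For $\lam\in\{0^+,0^-\}$ the second alternative in Lemma~\ref{6.1} (the one about isolated dots and non-closed curves) never occurs, so each of the two forms equals $0$ if its diagram carries a circuit with an odd number of decorations and equals $\delta^{c}$ otherwise. Hence it remains only to show that $\Omega$ contains a circuit with an odd number of decorations exactly when $\Omega'$ does.

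The heart of the argument is a parity count. By definition $\alpha$ toggles the decoration on precisely one arc of $(\sigma_1,\mathbb{D}_1)$, namely the arc $(\sigma_1(n),n)$ through the last dot (here $\sigma_1(n)\neq n$, there being no isolated dots), and likewise on the arc $(\sigma_2(n),n)$ of $(\sigma_2,\mathbb{D}_2)$. Thus $\Omega$ and $\Omega'$ differ only in that the decoration status of the single cap arc $(\sigma_1(n),n)$ and of the single cup arc $(\sigma_2(n),n)$ has been switched. In $\Omega$ both of these arcs are incident to the glued vertex labelled $n$, so they lie on one and the same circuit $C_0$ of $\Omega$ (which degenerates to a bigon when $\sigma_1(n)=\sigma_2(n)$); the same circuit structure occurs in $\Omega'$. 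Therefore passing from $\Omega$ to $\Omega'$ leaves the decoration set of every circuit other than $C_0$ untouched, while the number of decorations on $C_0$ changes by $0$ or by $\pm 2$; in every case the parity of the decoration count on each circuit is unchanged. This gives the displayed identity and hence $G^+(2p,p)=G^-(2p,p)$.

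The step I expect to demand the most care is the geometric claim that the cap arc $(\sigma_1(n),n)$ and the cup arc $(\sigma_2(n),n)$ always sit on a common circuit of $\Omega$: this rests on the precise recipe for building $\Omega$ and on both arcs meeting the vertex $n$ at which the cap and cup halves are identified, and the essentially trivial bigon case $\sigma_1(n)=\sigma_2(n)$ should be recorded explicitly. Everything else is routine bookkeeping with Lemma~\ref{6.1} and with the properties of $\alpha$ already proved in Section~4.
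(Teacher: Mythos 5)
Your proof is correct and follows essentially the same route as the paper: reduce via the order-preserving bijection $\alpha$ (Lemma \ref{4.8}) to comparing $\Phi_{0^+}$ and $\Phi_{0^-}$ through Lemma \ref{6.1}, then observe that the two arcs through the dot $n$, whose decorations $\alpha$ toggles, lie on a common circuit of $\Omega$, so the parity of decorations on every circuit is unchanged. The paper phrases this as a four-case analysis using the dot-removal rules (Figure \ref{Fig13}) rather than your uniform parity count, but the underlying argument is the same.
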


\begin{proof}
By Lemma \ref{4.8}, we only need to prove that
$$\Phi_{0^+}((\sigma_1, \mathbb{D}_1), (\sigma_2, \mathbb{D}_2))=
\Phi_{0^-}(\alpha(\sigma_1, \mathbb{D}_1), \alpha(\sigma_2, \mathbb{D}_2))$$
for arbitrary $(\sigma_1, \mathbb{D}_1), (\sigma_2, \mathbb{D}_2)\in M(0^+)$.
By Lemma \ref{6.1}, it is enough to compare two graphs $\Omega$ and $\alpha(\Omega)$, where
$\alpha(\Omega)$ is constructed by $\alpha(\sigma_1, \mathbb{D}_1)$ and $\alpha(\sigma_2, \mathbb{D}_2)$.
We split the proof into the following four cases.

{\it Case 1.}\,\, $(\sigma_1(n), n)\notin \mathbb{D}_1$, $(\sigma_2(n), n)\notin \mathbb{D}_2$.
Piecing these two arcs together, we obtain a curve $\Gamma$ without decorations,
which is a part of $\Omega$. On the other hand,
$(\sigma_i(n), n)\in\alpha(\mathbb{D}_i)$, for $i=1, 2$, then piecing the two decorated arcs,
we get a curve $\alpha(\Gamma)$ with two decorations, which is a part of $\alpha(\Omega)$.
According to the rule of removing dots (See Figure \ref{Fig4}),
$\alpha(\Gamma)=\Gamma$, and consequently, $\alpha(\Omega)=\Omega$.

{\it Case 2.}\,\, $(\sigma_1(n), n)\in \mathbb{D}_1$, $(\sigma_2(n), n)\in \mathbb{D}_2$.
This is dual to Case 1 and the proof is similar.  We omit the details.

{\it Case 3.}\,\, $(\sigma_1(n), n)\notin \mathbb{D}_1$, $(\sigma_2(n), n)\in \mathbb{D}_2$.
It can be analyzed similar to Case 1, too. We omit the details here.

{\it Case 4.}\,\, $(\sigma_1(n), n)\in \mathbb{D}_1$, $(\sigma_2(n), n)\notin \mathbb{D}_2$. It is dual to Case 3.
\end{proof}
Note that the above four cases can be illustrated by Figure \ref{Fig13},
in which the left one corresponds to the first two cases and the right one to the others.
\begin{figure}[H]
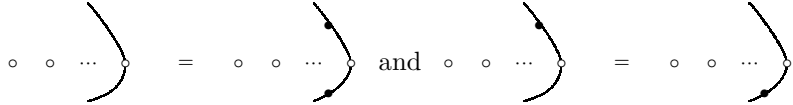

	\[
	\xy
    (5,8)*{\scriptstyle}="1d";
    (5,-5)*{\scriptstyle}="2d";
    (-5,0)*{\scriptstyle\circ};
    (0,0)*{\scriptstyle\circ};
    (5,0)*{\scriptstyle\cdots};
    (10,0)*{\scriptstyle\circ}="1s";
    "1s"; "1d" **\crv{(9.8,2) & (6,7)};
    "1s"; "2d" **\crv{(9.8,-4) & (6,-4.7)};
    (18,0)*{\scriptstyle =};
    (35,8)*{\scriptstyle}="1a";
    (35,-5)*{\scriptstyle}="2a";
    (25,0)*{\scriptstyle\circ};
    (30,0)*{\scriptstyle\circ};
    (35,0)*{\scriptstyle\cdots};
    (40,0)*{\scriptstyle\circ}="1b";
    (37,5)*{\scriptstyle\bullet};
    (37,-4)*{\scriptstyle\bullet};
    "1b"; "1a" **\crv{(39.8,2) & (36,7)};
    "1b"; "2a" **\crv{(39.8,-3) & (36,-4.7)};
    \endxy
    \,\,\,\,\,
    \text{and}\,\,\,\,\,
    \xy
    (5,8)*{\scriptstyle}="1c";
    (5,-5)*{\scriptstyle}="2c";
    (-5,0)*{\scriptstyle\circ};
    (0,0)*{\scriptstyle\circ};
    (5,0)*{\scriptstyle\cdots};
    (10,0)*{\scriptstyle\circ}="1f";
    "1f"; "1c" **\crv{(9.8,2) & (6,7)};
    "1f"; "2c" **\crv{(9.8,-4) & (6,-4.7)};
    (18,0)*{\scriptstyle =};
    (35,8)*{\scriptstyle}="1h";
    (35,-5)*{\scriptstyle}="2h";
    (25,0)*{\scriptstyle\circ};
    (30,0)*{\scriptstyle\circ};
    (35,0)*{\scriptstyle\cdots};
    (40,0)*{\scriptstyle\circ}="1g";
    (7,5)*{\scriptstyle\bullet};
    (37,-4)*{\scriptstyle\bullet};
    "1g"; "1h" **\crv{(39.8,2) & (36,7)};
    "1g"; "2h" **\crv{(39.8,-3) & (36,-4.7)};
    \endxy
    \]
    \caption{Diagrammatic approach}
	\label{Fig13}
\end{figure}
For $S^+(2p, p)$, another necessary preparation is to consider the Gram matrix of its restriction.

\begin{lem}\label{6.5}
$\det G^+(2p, p)=\delta^{\frac{1}{2}\left(\smallmatrix 2p\\
p \endsmallmatrix \right)}\det G(2p-1, p-1)$
\end{lem}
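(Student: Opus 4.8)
The plan is to compare the Gram matrix $G^{+}(2p,p)$ directly with $G(2p-1,p-1)$ via the bijection $\beta^{+}\colon M^{+}(0)\to M(1)_{2p-1}$ defined just before Lemma~\ref{4.9}. Note first that $M(1)_{2p-1}$ is precisely the basis of the cell module $S(2p-1,p-1)$, and that $|M^{+}(0)|=\tfrac12\binom{2p}{p}$: this follows from Lemma~\ref{4.4} applied with $n=2p$ together with Lemma~\ref{4.7} and the disjoint decomposition $M(0)_{2p}=M^{+}(0)\sqcup M^{-}(0)$. So $\beta^{+}$ identifies two index sets of the same size $d:=\tfrac12\binom{2p}{p}$, and by Lemma~\ref{4.9} it preserves the order of Definition~\ref{4.5}. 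Ordering the bases of $S^{+}(2p,p)$ and $S(2p-1,p-1)$ by Definition~\ref{4.5}, it therefore suffices to establish the entrywise identity
\[
\Phi_{0^{+}}\big((\sigma_{1},\mathbb{D}_{1}),(\sigma_{2},\mathbb{D}_{2})\big)=\delta\cdot\Phi_{1}\big(\beta^{+}(\sigma_{1},\mathbb{D}_{1}),\beta^{+}(\sigma_{2},\mathbb{D}_{2})\big)
\]
for all $(\sigma_{1},\mathbb{D}_{1}),(\sigma_{2},\mathbb{D}_{2})\in M^{+}(0)$; for then $G^{+}(2p,p)=\delta\,G(2p-1,p-1)$ as $d\times d$ matrices, whence $\det G^{+}(2p,p)=\delta^{d}\det G(2p-1,p-1)$, which is the assertion. (This identity is the Gram-form refinement of the module isomorphism $S^{+}(2p,p){\downarrow}\cong S(2p-1,p-1)$ of Lemma~\ref{5.2}(3), but that isomorphism alone does not relate the two bilinear forms, so the identity must be verified directly.)

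To prove the identity I would invoke the combinatorial description of $\Phi_{\lambda}$ in Lemma~\ref{6.1}. Let $\Omega$ be the diagram obtained by placing the cap diagram of $(\sigma_{1},\mathbb{D}_{1})$ on the cup diagram of $(\sigma_{2},\mathbb{D}_{2})$ on $2p$ dots, and let $\Omega'$ be the corresponding diagram built from the two $\beta^{+}$-images on $2p-1$ dots. Writing $k^{i}:=\sigma_{i}(2p)$, recall that $\beta^{+}$ turns the arc $(k^{i},2p)$ into the isolated dot $k^{i}$ and deletes that arc's decoration when it has one, while leaving every other arc and every other decoration untouched. Since $\sigma_{1}$ and $\sigma_{2}$ have no isolated dots, every arc of $\Omega$ lies on a closed circuit, and the arcs $(k^{1},2p)$, $(k^{2},2p)$ lie on the unique circuit $\mathcal{C}$ of $\Omega$ passing through the dot $2p$. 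Hence $\Omega'$ is obtained from $\Omega$ by ``opening up'' $\mathcal{C}$: if $k^{1}=k^{2}$ then $\mathcal{C}$ is the two-dot loop $\{k^{1},2p\}$ and turns into an isolated dot, and if $k^{1}\neq k^{2}$ then $\mathcal{C}$ turns into a single non-closed curve joining $k^{1}$ and $k^{2}$. In either case the remaining (``non-special'') circuits of $\Omega$ reappear unchanged in $\Omega'$, with their decorations intact; $\Omega'$ acquires exactly one isolated dot or one non-closed curve, so the relevant count in Lemma~\ref{6.1} equals $\lambda=1$ and never forces the form to vanish; and $c(\Omega)=c(\Omega')+1$, where $c(\cdot)$ denotes the number of circuits.

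The only delicate point — which I expect to be the crux — is to show that $\Omega$ contains a closed circuit with an odd number of decorations if and only if $\Omega'$ does. One direction is immediate: a bad circuit of $\Omega'$ is non-special, hence is one of the surviving circuits of $\Omega$, hence is bad in $\Omega$; and a bad non-special circuit of $\Omega$ likewise survives in $\Omega'$. So the issue is only whether the special circuit $\mathcal{C}$ can be bad while no other circuit of $\Omega$ is. But $(\sigma_{i},\mathbb{D}_{i})\in M^{+}(0)$ means $|\mathbb{D}_{i}|$ is even, so the total number of decorations of $\Omega$ is $|\mathbb{D}_{1}|+|\mathbb{D}_{2}|$, an even number; summing over the circuits shows that the number of odd-decorated circuits of $\Omega$ is even. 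Thus if $\mathcal{C}$ is odd-decorated there is a second odd-decorated circuit, which is non-special and so survives in $\Omega'$ — making $\Omega'$ bad as well. This proves the equivalence, and the identity follows: if either diagram is bad both forms vanish, while if neither is bad then $\Phi_{0^{+}}=\delta^{c(\Omega)}=\delta\cdot\delta^{c(\Omega')}=\delta\,\Phi_{1}$ by the removal rules of Figure~\ref{Fig4}. (One could alternatively route the argument through $\beta^{-}$ and the map $\alpha$ using $\beta^{-}\circ\alpha=\beta^{+}$ together with Lemma~\ref{6.4}, but this is not needed.)
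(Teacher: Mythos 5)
Your proposal is correct and follows essentially the same route as the paper's proof: reduce to the matrix identity $G^+(2p,p)=\delta\,G(2p-1,p-1)$ via the order-preserving bijection $\beta^+$ (Lemma \ref{4.9}), then verify the entrywise relation $\Phi_{0^+}=\delta\,\Phi_1$ using Lemma \ref{6.1}, with the same parity argument that evenness of $|\mathbb{D}_1|+|\mathbb{D}_2|$ forces a second odd-decorated circuit whenever the circuit through the dot $2p$ is odd-decorated. Your explicit description of how $\beta^+$ opens that circuit into an isolated dot or a single non-closed curve is just a more detailed rendering of the paper's Case 3.
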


\begin{proof}
We will prove $G^+(2p, p)=\delta G(2p-1, p-1)$ and then the lemma follows from Lemma \ref{4.4}.
Since $\beta^+$ preserves the order by Lemma \ref{4.9}, given two elements $(\sigma_1, \mathbb{D}_1), (\sigma_2, \mathbb{D}_2)\in M^+(0)$,
it is enough to prove
$$\Phi_{0^+}((\sigma_1, \mathbb{D}_1), (\sigma_2, \mathbb{D}_2))=\delta\Phi_1(\beta^+(\sigma_1, \mathbb{D}_1), \beta^+(\sigma_2, \mathbb{D}_2)).$$
Let us compute them through graphs $\Omega$ and $\beta^+(\Omega)$ by Lemma \ref{6.1}. We split the proof into the following three cases.

{\it Case 1.}\,\,In $\Omega$, the number of decorations on the circuit $\rm O_n$ passing through the dot $n$ is odd.
By  Lemma \ref{6.1} and the removing rule in Figure \ref{Fig4},
this implies that $\Phi_{0^+}((\sigma_1, \mathbb{D}_1), (\sigma_2, \mathbb{D}_2))=0$. On the other hand, because $|\mathbb{D}_1|+|\mathbb{D}_2|$ is even,
there is at least one other circuit $\rm O$ in $\Omega$ having odd decorations.
By the definition of $\beta^+$, the decorations on $\rm O$ is not changed by $\beta^+$, that is, $\rm O$ is a circuit in $\beta^+(\Omega)$ too.
As a result, $\Phi_1(\beta^+(\sigma_1, \mathbb{D}_1), \beta^+(\sigma_2, \mathbb{D}_2))=0$.

{\it Case 2.}\,\,In $\Omega$, the number of decorations on the circuit $\rm O_n$ is even, but there is one other circuit $\rm O$ in $\Omega$ with odd decorations. Clearly, $\beta^+$ does not change the circuit $\rm O$ and consequently, both $\Phi_{0^+}((\sigma_1, \mathbb{D}_1), (\sigma_2, \mathbb{D}_2))$ and
$\Phi_1(\beta^+(\sigma_1, \mathbb{D}_1), \beta^+(\sigma_2, \mathbb{D}_2))$ are zeros.

{\it Case 3.}\,\,The graph $\Omega$ contains no circuit with odd decorations. If the number of circuits in $\Omega$ is $c$,
then by the definition of $\beta^+$, the number of circuits in $\beta^+(\Omega)$ is $c-1$ clearly. It follows from Lemma \ref{6.1} that
the equality holds.
\end{proof}

Combing Lemma \ref{4.9}, \ref{6.4} with Lemma \ref{6.5} yields the following corollary.

\begin{cor}\label{6.6}
$\det G^-(2p, p)=\delta^{\frac{1}{2}\left(\smallmatrix 2p\\
p \endsmallmatrix \right)} \det G(2p-1, p-1)$.
\end{cor}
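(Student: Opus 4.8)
The plan is to derive the formula by combining the matrix identity of Lemma~\ref{6.4} with the determinant computation of Lemma~\ref{6.5}, so that essentially no new work is needed.

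First I would emphasize that the equality $G^+(2p,p)=G^-(2p,p)$ in Lemma~\ref{6.4} is an equality of matrices whose rows and columns have been indexed compatibly: the entries of $G^+(2p,p)$ are indexed by $M^+(0)$ listed in the order of Definition~\ref{4.5}, those of $G^-(2p,p)$ by $M^-(0)$ in the same order, and the bijection $\alpha\colon M^+(0)\to M^-(0)$ is order-preserving by Lemma~\ref{4.8}. Hence the two index sets are matched term by term, and the proof of Lemma~\ref{6.4} shows that the $((\sigma_1,\mathbb{D}_1),(\sigma_2,\mathbb{D}_2))$-entry of $G^+(2p,p)$ coincides with the $(\alpha(\sigma_1,\mathbb{D}_1),\alpha(\sigma_2,\mathbb{D}_2))$-entry of $G^-(2p,p)$. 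Consequently $\det G^-(2p,p)=\det G^+(2p,p)$, and substituting the value of $\det G^+(2p,p)$ supplied by Lemma~\ref{6.5} yields the claim.

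An equivalent self-contained route, which is the one hinted at by invoking Lemma~\ref{4.9}, is to imitate the proof of Lemma~\ref{6.5} with $\beta^-$ in place of $\beta^+$: since $\beta^-\colon M^-(0)\to M(1)_{2p-1}$ is an order-preserving bijection (Lemma~\ref{4.9}), the same three-case analysis comparing the glued diagram $\Omega$ with $\beta^-(\Omega)$ (using Lemma~\ref{6.1} and the removing rules of Figure~\ref{Fig4}) gives $G^-(2p,p)=\delta\, G(2p-1,p-1)$ as matrices. Therefore $\det G^-(2p,p)=\delta^{|M^-(0)|}\det G(2p-1,p-1)$, and $|M^-(0)|=|M^+(0)|=\frac{1}{2}\binom{2p}{p}$ by Lemmas~\ref{4.4} and~\ref{4.7}.

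There is no genuine obstacle here; the single point that requires care — already handled by Lemmas~\ref{4.8} and~\ref{4.9} — is that all three Gram matrices $G^+(2p,p)$, $G^-(2p,p)$ and $G(2p-1,p-1)$ must be written with respect to compatibly ordered bases before one may compare them entrywise or pass to determinants. Once that bookkeeping is in place, the corollary is immediate from Lemmas~\ref{6.4} and~\ref{6.5}.
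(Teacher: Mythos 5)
Your first route—combining the matrix equality of Lemma \ref{6.4} with the determinant formula of Lemma \ref{6.5}, after noting the order-compatibility furnished by Lemmas \ref{4.8} and \ref{4.9}—is exactly how the paper deduces the corollary, and your bookkeeping (including $|M^-(0)|=\frac{1}{2}\binom{2p}{p}$ via Lemmas \ref{4.4} and \ref{4.7}) is correct. The alternative argument via $\beta^-$ is also sound but is essentially the same computation, so no further comment is needed.
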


For the sake of description, we define a matrix $G(2p, p)$, which is related with $G^+(2p, p)$ and $G^-(2p, p)$.
Given two $(2p, p)$-decorated parenthesis diagrams
$(\sigma_1, \mathbb{D}_1)$ and $(\sigma_2, \mathbb{D}_2)$, define the corresponding matrix element to be
$\Phi_1(\iota(\sigma_1, \mathbb{D}_1), \iota(\sigma_2, \mathbb{D}_2))$, where $\iota$ is defined in the proof of Lemma \ref{5.2}.
Is is helpful to point out that
the matrix $G(2p, p)$ is not the Gram matrix of any cell module.
Clearly, if $|\mathbb{D}_1|$ is odd and $|\mathbb{D}_2|$ is even, then $\Phi_1(\iota(\sigma_1, \mathbb{D}_1), \iota(\sigma_2, \mathbb{D}_2))=0$.
This implies that if we adjust the order appropriately, the matrix $G(2p, p)$ becomes $\left(\begin{array}{cc} G^+(2p, p) & 0\\
0 & G^-(2p, p) \end{array} \right)$. As a result, $\det G(2p, p)=(\det G^+(2p, p))^2$.

Before we can study the recurrence relation of determinants of Gram matrices of cell modules,
we also need to investigate some more information about the internal structure of a Gram matrix.

\begin{lem}\label{6.7}
Let $n>2p$ and let $\left(\begin{array}{cc} G(n, p)_{11} & G(n, p)_{12}\\
G(n, p)_{21} & G(n, p)_{22}\end{array} \right)$ be the block form of Gram matrix $G(n, p)$,
where $G(n, p)_{11}$ is a $\left(\smallmatrix n-1 \\
p \endsmallmatrix \right)\times\left(\smallmatrix n-1 \\
p \endsmallmatrix \right)$ matrix. Then we have
\begin{enumerate}
\item[(1)]\, $G(n, p)_{12}'=G(n, p)_{21},  \,\,\,\,\,\,\,G(n, p)_{11}=G(n-1, p);$
\item[(2)]\, $G(n, p)_{12}$ and $G(n, p)_{22}$ having the block form
$$\left(\begin{array}{cc} 0 & \star\\
G(n-2, p-1) & \star\end{array} \right)\,\,\, and \,\,\,\left(\begin{array}{cc} \delta G(n-2, p-1) & \star\\
\star & \star\end{array} \right),$$ respectively.
\end{enumerate}
\end{lem}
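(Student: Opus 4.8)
The plan is to read off the block structure directly from the order of Definition~\ref{4.5} and then to evaluate the entries of each block via Lemma~\ref{6.1}. The starting point is a purely order-theoretic observation: for an $(n,p)$-decorated parenthesis diagram the first entry $i_1$ of its associated sequence equals $n$ precisely when $\sigma(n)\neq n$, and satisfies $i_1\le n-1$ when $n$ is an isolated dot. Hence, in the order of Definition~\ref{4.5}, the $(n,p)$-diagrams with $\sigma(n)=n$ form an initial segment of $M(\lam)$ and those with $\sigma(n)\neq n$ form the complementary final segment; by Lemma~\ref{4.4} these segments have sizes $\binom{n-1}{p}$ and $\binom{n-1}{p-1}$, which is exactly the partition underlying the block form $\left(\begin{smallmatrix}G(n,p)_{11}&G(n,p)_{12}\\G(n,p)_{21}&G(n,p)_{22}\end{smallmatrix}\right)$.

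For part~(1), the equality $G(n,p)_{12}'=G(n,p)_{21}$ is immediate from the symmetry $\Phi(S,T)=\Phi(T,S)$ of the Gram matrix. For $G(n,p)_{11}=G(n-1,p)$ I would use the order-preserving injection $\iota$ of Lemma~\ref{5.2}, which appends an isolated dot at position $n$ and identifies $M(n-1-2p)_{n-1}$ with the set of $(n,p)$-diagrams having $\sigma(n)=n$. For two such diagrams, the graph $\Omega$ of Lemma~\ref{6.1} attached to their $\iota$-images differs from the one attached to the originals only by one extra isolated dot at position $n$: this changes neither the number of circuits nor the parity of decorations on any circuit, it increases the quantity ``(isolated dots) $+$ (non-closed curves)'' by exactly one, and the threshold $\lam=n-2p$ also increases by exactly one when $n-1$ is replaced by $n$. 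By Lemma~\ref{6.1} the two values of $\Phi_\lam$ therefore agree, and since $\iota$ preserves the order this gives $G(n,p)_{11}=G(n-1,p)$.

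For part~(2), I would refine both index sets once more along the order. On the column side, single out the diagrams in which $n$ is joined to $n-1$; by the discussion preceding Lemma~\ref{4.10} the arc $(n-1,n)$ is then automatically undecorated. Gluing such a column diagram against a row diagram, in $\Omega$ the two arcs at $\{n-1,n\}$ either close into an \emph{undecorated} circuit, contributing one factor of $\delta$ and otherwise reproducing, by Lemma~\ref{6.1} and the removing rules of Figure~\ref{Fig4}, exactly the graph of the corresponding pair of $(n-2,p-1)$-diagrams --- this accounts for the block $G(n-2,p-1)$ inside $G(n,p)_{12}$ and the block $\delta\,G(n-2,p-1)$ inside $G(n,p)_{22}$ --- or they are forced to lie on a circuit carrying an \emph{odd} number of decorations, which by Lemma~\ref{6.1} makes the corresponding entries vanish and produces the zero block of $G(n,p)_{12}$. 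The hard part, and the genuine content of the lemma, is this last step of bookkeeping: one must check that the order of Definition~\ref{4.5}, transported through $\iota$, $\gamma$ (and, when $n=2p+1$, $\beta^{\pm}$), sorts the basis so that the four sub-blocks sit exactly where claimed, and verify case by case --- in the style of the proofs of Lemmas~\ref{6.4} and~\ref{6.5} --- that pairing a ``top-left'' row against a ``top-left'' column always forces an odd-decorated closed circuit in $\Omega$. I also expect the boundary case $n=2p+1$ to need separate attention, since there the row-block consisting of diagrams in which both $n-1$ and $n$ are isolated is empty and the block decomposition degenerates accordingly.
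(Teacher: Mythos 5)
Your coarse setup and part (1) are fine and essentially reproduce the paper's argument: the lexicographic order of Definition~\ref{4.5} puts the diagrams with $n$ isolated first, $\iota$ preserves the order and the values of the form, and symmetry gives $G(n,p)_{12}'=G(n,p)_{21}$. The problem is in part (2), where your stated mechanisms are wrong in two of the three blocks. First, the zero block: for a row $S$ with both $n-1$ and $n$ isolated and a column $T$ containing the (undecorated) arc $(n-1,n)$, the graph $\Omega$ need not contain any decorated circuit at all, so the vanishing cannot come from the ``odd number of decorations on a closed circuit'' clause of Lemma~\ref{6.1}. Take $n=6$, $p=1$, $S$ the arc $(1,2)$ with $3,4,5,6$ isolated and $T$ the arc $(5,6)$ with $1,2,3,4$ isolated: $\Omega$ consists of two undecorated non-closed curves and the isolated dots $3,4$, with no circuit whatsoever, yet $\Phi_\lambda(S,T)=0$ because only two through-connections survive while $\lambda=4$, i.e.\ the product falls into $A(<\lambda)$. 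It is this second clause of Lemma~\ref{6.1} (the count of isolated dots plus curves joining an isolated dot of $S$ to an isolated dot of $T$ dropping below $\lambda$) that the paper invokes, and your proposal never uses it.

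Second, your dichotomy ``the two arcs at $\{n-1,n\}$ either close into an undecorated circuit (giving $\delta$) or lie on an odd-decorated circuit'' only describes the $G(n,p)_{22}$ situation, where both the row and the column diagram carry the arc $(n-1,n)$; that is how the block $\delta G(n-2,p-1)$ arises. For the block $G(n-2,p-1)$ inside $G(n,p)_{12}$ the row diagram has $n$ isolated, so there is only one arc at $\{n-1,n\}$, nothing closes into a circuit and no factor $\delta$ can appear; the correct argument is that the value is unchanged when one deletes the dots $n-1,n$ and lets the former partner of $n-1$ become isolated (a $\gamma$-type reduction), which in turn requires observing that the arc of the row diagram ending at $n-1$ carries no decoration — true when $n\geq 2p+2$, and precisely the point where the case $n=2p+1$ degenerates, as you suspected. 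Finally, the ordering facts you defer as ``the hard part'' (among the $n$-isolated rows those with $n-1$ isolated come first; among the $\sigma(n)\neq n$ columns those with arc $(n-1,n)$ come first) follow in one line from Definition~\ref{4.5} by comparing $i_1$ and $i_2$; but since your proof both postpones these and assigns an incorrect vanishing mechanism to the zero block, it does not yet establish the lemma.
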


\begin{proof}
(1) Note that $G(n, p)$ is symmetric and then $G(n, p)_{12}'=G(n, p)_{21}$ follows.
According to Definition \ref{4.5},  the diagrams with $n$ being an isolated dot are the smallest terms.
The map $\iota$ defined in Lemma \ref{5.2} clearly preserves the order
and the values taken by the bilinear form, so $G(n, p)_{11}=G(n-1, p)$.

(2) By Definition \ref{4.5}, the diagrams next to those with $n$ being an isolated dot have $(n-1, n)$ as an arc.
Given two diagrams with $n$ being isolated, the diagram with $n-1$ isolated comes first.
For $S=(\sigma_1, \mathbb{D}_1),\,\, T=(\sigma_2, \,\mathbb{D}_2)$, if $\sigma_1(n-1)=n-1$,
$\sigma_1(n)=n$ and $\sigma_2(n-1)=n$, then we have from Lemma \ref{6.1} that the value taken by the bilinear form is zero.
This implies that the left top block of $G(n, p)_{12}$ is zero.
If $\sigma_1(n)=n$, $\sigma_1(n-1)\neq n-1$ and $\sigma_2(n-1)=n$, then Lemma \ref{6.1} gives
that the value taken by the bilinear form does not change if we remove the last two dots in both diagrams,
that is, the left bottom block of $G(n, p)_{12}$ is $G(n-2, p-1)$.

In order to  handle the block form of $G(n, p)_{22}$, we only need to consider the diagrams with $(n-1, n)$ being an arc.
We claim that $(n-1, n)$ is not decorated. In fact, if $(n-1, n)$ decorated,
then by Definition \ref{4.2} the diagram has no isolated dots and thus $n=2p$.
It is in contradiction with $n>2p$. With this preparation, it follows from Lemma \ref{6.1}
that the left top block of $G(n, p)_{22}$ is $\delta G(n-2, p-1)$.
\end{proof}

\begin{remark}
For convenience of the read we explain which diagrams correspond to which
blocks of the matrices in Lemma \ref{6.7} in detail by the form below as an example.
For simplicity we will not explain other block matrices that appear in the paper and the reader can analyze them similarly.

\begin{figure}[H]
	\[
	\xy
    (-40,-20)*{\scriptstyle}="1a";
    (-40,65)*{\scriptstyle}="2a";
    "1a"; "2a" **\dir{-};
    (40,-20)*{\scriptstyle}="1b";
    (40,65)*{\scriptstyle}="2b";
    "1b"; "2b" **\dir{-};
    (-55,15)*{\scriptstyle}="1c";
    (40,15)*{\scriptstyle}="2c";
    "1c"; "2c" **\dir{-};
    (-55,55)*{\scriptstyle}="1d";
    (40,55)*{\scriptstyle}="2d";
    "1d"; "2d" **\dir{-};
    (0,-20)*{\scriptstyle}="1e";
    (0,65)*{\scriptstyle}="2e";
    "1e"; "2e" **\dir{-};
    (-65,35)*{\scriptstyle n\,\, isolated};
    (-65,-3)*{\scriptstyle n\,\, not};
    (-65,-6)*{\scriptstyle isolated};
    (-20,65)*{\scriptstyle n\,\, isolated};
    (20,65)*{\scriptstyle n\,\,not\,\, isolated};
    (20,-20)*{\scriptstyle}="1f";
    (20,60)*{\scriptstyle}="2f";
    "1f"; "2f" **\dir{-};
    (-55,-5)*{\scriptstyle}="1g";
    (40,-5)*{\scriptstyle}="2g";
    "1g"; "2g" **\dir{-};
    (0,35)*{\scriptstyle}="1h";
    (40,35)*{\scriptstyle}="2h";
    "1h"; "2h" **\dir{-};
    (-55,35)*{\scriptstyle}="1j";
    (-40,35)*{\scriptstyle}="2j";
    "1j"; "2j" **\dir{-};
    (-20,-20)*{\scriptstyle}="1k";
    (-20,15)*{\scriptstyle}="2k";
    "1k"; "2k" **\dir{-};
    (-20,55)*{\scriptstyle}="1l";
    (-20,60)*{\scriptstyle}="2l";
    "1l"; "2l" **\dir{-};
    (-20,35)*{\scriptstyle G(n, p)_{11}};
    (10,45)*{\scriptstyle 0};
    (10,25)*{\scriptstyle G(n-2, p-1)};
    (30,45)*{\scriptstyle \star};
    (30,25)*{\scriptstyle \star};
    (10,5)*{\scriptstyle \delta G(n-2, p-1)};
    (30,5)*{\scriptstyle \star};
    (30,-13)*{\scriptstyle \star};
    (10,-13)*{\scriptstyle \star};
    (-10,5)*{\scriptstyle G'(n-2, p-1)};
    (-30,5)*{\scriptstyle 0};
    (-30,-13)*{\scriptstyle \star};
    (-10,-13)*{\scriptstyle \star};
    (-30,58)*{\scriptstyle n-1\,\, isolated};
    (-50,45)*{\scriptstyle n-1\,\, isolated};
    (-10,58)*{\scriptstyle others};
    (-50,25)*{\scriptstyle others};
    (5,60)*{\scriptstyle\circ}="1s";
    (15,60)*{\scriptstyle\circ}="2s";
    (5,62)*{\scriptstyle n-1};
    (15,62)*{\scriptstyle n};
    "1s"; "2s" **\crv{(6,56.5) & (14,56.5)};
    (30,58)*{\scriptstyle others};
    (-55,5)*{\scriptstyle\circ}="1t";
    (-45,5)*{\scriptstyle\circ}="2t";
    (-55,7)*{\scriptstyle n-1};
    (-45,7)*{\scriptstyle n};
    "1t"; "2t" **\crv{(-54,1.5) & (-46,1.5)};
    (-50,-13)*{\scriptstyle others};
    \endxy
    \]
\end{figure}
\end{remark}

Note that some Gram matrices are the same as those of cell modules of type A up to multiplication with $\delta$. The following result is simple.

\begin{lem}\label{6.8}
If $\lam=\dot{k}$, then $\dot{G}(n, p)=\delta \mathcal{G}(n, p)$ for all $n\geq 2$. Moreover, $\dot{G}(2p, p)=\delta \dot{G}(2p-1, p-1)$.
\end{lem}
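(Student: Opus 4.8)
The plan is to exploit the fact that, for $\lam=\dot k$ with $k=n-2p$, the cellular basis element $C_{S,T}^{\dot k}$ is simply a type~A Temperley--Lieb diagram $\mathcal C_{\mathcal S,\mathcal T}^{k}$ with one extra decorated circuit adjoined, so that $\Phi_{\dot k}$ can be read off from the type~A bilinear form of ${\rm TL}_n(\delta)$ plus the contribution of those circuits. First I would compute $C_{S,T}^{\dot k}C_{U,V}^{\dot k}$ directly. Stacking the two diagrams, the middle part is exactly the type~A concatenation of $\mathcal C_{\mathcal S,\mathcal T}^{k}$ with $\mathcal C_{\mathcal U,\mathcal V}^{k}$: it produces $\delta^{c}\,\mathcal C_{\mathcal S,\mathcal V}^{k}$, with $c$ the number of loops formed in the middle, when the number of vertical strands (non-closed curves) is preserved, and a diagram with strictly fewer vertical strands --- hence, after adjoining a circuit, a first-type diagram attached to some $\dot m$ with $m<k$, i.e. an element of $A(<\dot k)$ --- otherwise. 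Independently, the two adjoined decorated circuits (one from each factor, each carrying a single decoration) simplify by the removing rules of Figure~\ref{Fig4}: sliding the decoration of one circuit over the other decorated circuit deletes it (third rule), after which that circuit is undecorated and equals $\delta$ (first rule), leaving $\delta$ times a single decorated circuit. Combining, $C_{S,T}^{\dot k}C_{U,V}^{\dot k}\equiv\delta^{c+1}C_{S,V}^{\dot k}\pmod{A(<\dot k)}$ exactly when the corresponding entry of $\mathcal G(n,p)$ equals $\delta^{c}$, and both vanish otherwise; that is, $\Phi_{\dot k}(C_S,C_T)=\delta\cdot(\mathcal G(n,p))_{S,T}$, which is precisely $\dot G(n,p)=\delta\,\mathcal G(n,p)$ once the (undecorated) parenthesis diagrams on the two sides are matched by the identity and the order of Definition~\ref{4.5} is identified with the type~A order.

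For the ``moreover'' part, put $n=2p$, so $k=0$. By the first part, $\dot G(2p,p)=\delta\,\mathcal G(2p,p)$ and $\dot G(2p-1,p-1)=\delta\,\mathcal G(2p-1,p-1)$, so it suffices to prove the type~A identity $\mathcal G(2p,p)=\delta\,\mathcal G(2p-1,p-1)$ (equivalently, one may argue entirely inside ${\rm TLD}_n(\delta)$ along the lines of the proof of Lemma~\ref{6.5}, using $\gamma$ in place of $\beta^{+}$). Every $(2p,p)$-parenthesis diagram has $\sigma(2p)\neq 2p$, so the order-preserving bijection $\gamma$ of Lemma~\ref{4.10} identifies the basis of $\mathcal S(2p,p)$ with that of $\mathcal S(2p-1,p-1)$, deleting the dot $2p$ and replacing the arc $(k,2p)$ by an isolated dot $k$ in both the cup and the cap diagram. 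I would then compare the graph $\Omega$ built from $S,T$ with the graph $\gamma(\Omega)$ built from $\gamma S,\gamma T$: in $\gamma(\Omega)$ there is a single non-closed curve running between the free ends at $k_S$ (top) and $k_T$ (bottom), together with $c'$ closed circuits; restoring the dot $2p$ and the two arcs $(k_S,2p)$ and $(k_T,2p)$ splices those two free ends together through $2p$, turning that curve into one more closed circuit, so $\Omega$ carries $c'+1$ circuits and no non-closed curve (hence no entry is forced to zero). By the type~A analogue of Lemma~\ref{6.1} this gives $(\mathcal G(2p,p))_{S,T}=\delta^{c'+1}=\delta\cdot(\mathcal G(2p-1,p-1))_{\gamma S,\gamma T}$, whence $\mathcal G(2p,p)=\delta\,\mathcal G(2p-1,p-1)$ and therefore $\dot G(2p,p)=\delta^{2}\mathcal G(2p-1,p-1)=\delta\,\dot G(2p-1,p-1)$.

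The routine parts --- that the type~A concatenation behaves as recalled, and that $\gamma$ respects the bilinear form up to the stated power of $\delta$ --- are exactly as in Sections~5 and~6. The one point requiring care is the removing-rule bookkeeping in the first paragraph: one must be sure that the pair of adjoined decorated circuits collapses to \emph{exactly} one factor $\delta$ times a single decorated circuit (not to a bare scalar, and not leaving any spurious decoration on the surviving strands of $\mathcal C_{\mathcal S,\mathcal V}^{k}$), since it is precisely this single extra $\delta$, uniform over all entries, that produces the factor $\delta$ relating $\dot G(n,p)$ to $\mathcal G(n,p)$.
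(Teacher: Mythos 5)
Your proposal is correct. For the first identity it is essentially the paper's own argument: the diagrams attached to $\dot{k}$ are type~A diagrams with a decorated circuit adjoined, and the two decorated circuits arising in a product collapse, by the third and first rules of Figure~\ref{Fig4}, to a single decorated circuit times one factor $\delta$, so $\Phi_{\dot{k}}=\delta\cdot(\text{type A form})$; your extra bookkeeping (that a drop in the number of through strands lands the product in $A(<\dot{k})$, and that no spurious decoration survives on the strands) is exactly the care the paper leaves implicit. For the ``moreover'' part you genuinely diverge: the paper just combines the first identity with Westbury's type~A recurrence (Lemma~\ref{as}) at $n=2p$, which, read literally, only yields the equality of determinants $\det\dot{G}(2p,p)=\delta^{N}\det\dot{G}(2p-1,p-1)$ with $N$ the matrix size, whereas you prove the entrywise matrix identity $\mathcal{G}(2p,p)=\delta\,\mathcal{G}(2p-1,p-1)$ directly, using the order-preserving bijection $\gamma$ of Lemma~\ref{4.10} restricted to undecorated diagrams and a circuit count, in the same spirit as the proof of Lemma~\ref{6.5} for $\beta^{+}$. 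Your route is slightly longer but actually establishes the statement as written (and is self-contained, not relying on \cite{W2}); the paper's route is shorter and suffices for all later uses, which only involve determinants. One minor point in your second paragraph: when $\sigma_1(2p)=\sigma_2(2p)$ the ``non-closed curve'' in $\gamma(\Omega)$ degenerates to an isolated dot rather than a path, but splicing through the dot $2p$ still produces exactly one additional closed circuit, so your count $c=c'+1$ and the conclusion are unaffected.
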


\begin{proof}
Since $\lam=\dot{k}$, for arbitrary $S, T\in M(\dot{k})$, both the diagrams
$C_{S, S}^{\dot{k}}$ and $C_{T, T}^{\dot{k}}$ are of the first type.  By the rules
in Figure \ref{Fig4} on removing circuits and dots, two decorated circuits are replaced
by one with a coefficient $\delta$, and this complete the proof of $\dot{G}(n, p)=\delta \mathcal{G}(n, p)$.
Combing the first equality with Lemma \ref{as} leads to the second one.
\end{proof}

A direct corollary of this lemma is about the quasi-heredity of ${\rm TLD}_n(\delta)$.

\begin{cor}\label{6.9}
The algebra ${\rm TLD}_n(\delta)$ {\rm($n\geq 2$)} is quasi-hereditary if and only if $\delta\neq 0$.
\end{cor}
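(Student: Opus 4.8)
The plan is to invoke the standard characterization of quasi-heredity for a cellular algebra over a field: ${\rm TLD}_n(\delta)$ is quasi-hereditary if and only if $\Lambda=\Lambda_0$, that is, if and only if $\Phi_\lambda\neq 0$ for every $\lambda\in\Lambda$ (the implication $\Lambda=\Lambda_0\Rightarrow$ quasi-hereditary is the remark following Lemma~\ref{2.3}; the converse is a well-known property of cellular algebras, see \cite{GL}). Since $\Phi_\lambda\neq 0$ is equivalent to the Gram matrix $G(\lambda)$ being nonzero as a matrix, it suffices to produce, for each cell module, a single nonzero entry of its Gram matrix, and the convenient choice is a diagonal one.

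For the direction $\delta\neq 0\Rightarrow$ quasi-hereditary, I would fix $\lambda\in\Lambda$, choose any basis diagram $S\in M(\lambda)$, and form the diagram $\Omega$ obtained by stacking the cap diagram of $S$ on the cup diagram of $S$. When $\lambda\in\{0^+,0^-,1,\dots,n\}$, each arc of $S$ contributes to $\Omega$ exactly one closed loop (its cap-copy joined to its cup-copy); if that arc carried a decoration, the resulting loop carries two, which are removed by the rule of Figure~\ref{Fig4}, so all $p$ circuits of $\Omega$ are undecorated and the remaining $n-2p$ dots are isolated in $\Omega$. Since this number $n-2p$ equals $\lambda$ when $\lambda\in\{1,\dots,n\}$, and since the exceptional clause of Lemma~\ref{6.1} does not apply when $\lambda=0^{\pm}$, Lemma~\ref{6.1} yields $\Phi_\lambda(C_S,C_S)=\delta^{\,p}$. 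When $\lambda=\dot{k}$, I would instead use Lemma~\ref{6.8}: $\dot{G}(n,p)=\delta\,\mathcal{G}(n,p)$, and the diagonal entries of the type A matrix $\mathcal{G}(n,p)$ are $\delta^{\,p}$, so those of $\dot{G}(n,p)$ are $\delta^{\,p+1}$. In every case the diagonal entries of the relevant Gram matrix are powers $\delta^{\,e}$ with $e\geq 0$, hence nonzero as soon as $\delta\neq 0$; therefore $\Lambda=\Lambda_0$ and ${\rm TLD}_n(\delta)$ is quasi-hereditary.

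For the converse I would argue as follows. Suppose $\delta=0$. Since $n\geq 2$, the poset $\Lambda$ contains an index of the form $\dot{k}$ (for instance $\dot{0}$ when $n$ is even and $\dot{1}$ when $n$ is odd). By Lemma~\ref{6.8}, $\dot{G}(n,p)=\delta\,\mathcal{G}(n,p)$, which is the zero matrix when $\delta=0$; hence $\Phi_{\dot{k}}=0$, so $\dot{k}\notin\Lambda_0$, $\Lambda\neq\Lambda_0$, and ${\rm TLD}_n(\delta)$ is not quasi-hereditary. Together with the previous paragraph, this establishes the corollary.

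I expect the only point requiring genuine care to be the uniform bookkeeping in the second paragraph: checking, across the several shapes $\lambda$ may take (in particular $0^+$, $0^-$, and the dotted classes), that the self-pairing diagram $\Omega$ really has the stated structure, so that Lemma~\ref{6.1} (or Lemma~\ref{6.8}) applies verbatim and returns a nonnegative power of $\delta$. There is no real obstacle here; the corollary is essentially immediate from Lemmas~\ref{6.1} and~\ref{6.8}.
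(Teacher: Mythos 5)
Your proposal is correct and takes essentially the same approach as the paper: both directions reduce to checking whether $\Phi_\lambda\neq 0$ for every $\lambda\in\Lambda$, and the $\delta=0$ direction is handled identically via Lemma~\ref{6.8}. The only difference is cosmetic: where you read off the nonzero diagonal Gram entries $\delta^{p}$ (resp.\ $\delta^{p+1}$ for the dotted classes) diagrammatically from Lemma~\ref{6.1}, the paper verifies the same nonvanishing by squaring the explicit elements $e_1e_3\cdots e_p$ and $e_{\bar 1}e_1e_3\cdots e_p$, which are just the diagrams $C^{\lambda}_{S,S}$ for one particular $S$.
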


\begin{proof}
If $\delta=0$, then Lemma \ref{6.8} implies that $\Phi_{\dot{k}}=0$ for all $k=n-2, n-4 \cdots$
and thus ${\rm TLD}_n(\delta)$ is not quasi-hereditary.

On the other hand, suppose that $\delta\neq 0$. If $n$ is odd,
then $$(e_1e_3\cdots e_p)^2=\delta^p(e_1e_3\cdots e_p).$$ This gives that $\Phi_{n-2p}=\delta^p\neq 0$ for $p\geq 1$.
Moreover, $$(e_{\bar{1}}e_1e_3\cdots e_p)^2=\delta^{p+1}(e_{\bar{1}}e_1e_3\cdots e_p),$$
that is, $\Phi_{\dot{n-2p}}=\delta^{p+1}\neq 0$. Consequently, $\Phi_{\lam}\neq 0$ for all $\lam\in\Lambda$, that is,
${\rm TLD}_n(\delta)$ is quasi-hereditary.

If $n$ is even, the only extra cases to consider are $\lam =0^+,\,0^-,\,\dot{0}$.
For $0^+$, we have $(e_1e_3\cdots e_p)^2=\delta^p(e_1e_3\cdots e_p)$, for $0^-$,
$(e_{\bar{1}}e_3\cdots e_p)^2=\delta^p(e_{\bar{1}}e_3\cdots e_p)$ and for $\dot{0}$,
$(e_{\bar{1}}e_1e_3\cdots e_p)^2=\delta^{p+1}(e_{\bar{1}}e_1e_3\cdots e_p)$, where $p=\frac{n}{2}$.
We deduce that all $\Phi_{\lam}\neq 0$ for $\lam\in\Lambda$.
\end{proof}

To give the recurrence relation of the determinants of the Gram matrices,
we temporarily let $R$ be a field with $Char R\neq 2$ and $\delta$ an indeterminate.
Let ${\rm TLD}_{n}(\delta)$ the corresponding Temperley-Lieb
algebra of type D over $R[\delta]$. Our strategy is to embed the algebra to a semi-simple one.
Let $q$ be another indeterminate over $R$ and denote by $F$ the field of fractions of $R[q]$.
Define an $R$-linear map from $R[\delta]$ to $F$ which send $\delta$ to $q+q^{-1}$. By using this map we
specialize ${\rm TLD}_{n}(\delta)$ to $F$  and denote the algebra by ${\rm TLD}_{F, n}(\delta)$.
We claim that ${\rm TLD}_{F, n}(\delta)$ is semi-simple. In fact, let $H_n(q^2)$ be the Hecke algebra of type $D_n$ over $F$.
Then by the results of \cite{H,P}, $H_n(q^2)$ is semi-simple. Note that Fan proved in \cite{F1}
that ${\rm TLD}_{F, n}(\delta)$ is isomorphic to a quotient algebra of $H_n(q^2)$ by certain ideal $I$. As a result,
${\rm TLD}_{F, n}(\delta)$ is semi-simple.

Moreover, by cellular theory,  ${\rm TLD}_{F, n}(\delta)$ is still a cellular algebra.
The notations of cell modules and Gram matrices will
not be changed because there is no danger of confusion. All of the cell modules of ${\rm TLD}_{F, n}(\delta)$
are a complete set of pairwise non-isomorphic absolutely irreducible modules.

\begin{lem}\label{6.10}
If $n\geq2p+1$, then the determinant of the Gram matrices satisfy the following recurrence relation
$$\det G(n, p)=
    \det G(n-1, p)r(n, p)^{\left(\smallmatrix n-1 \\
p-1 \endsmallmatrix \right)}\det G(n-1, p-1),$$
where $r(n, p)\in F$ satisfies $r(n, p)=\delta-\frac{1}{r(n-1, p)}$.
\end{lem}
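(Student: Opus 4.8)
The plan is to argue by induction on $n$ with $p$ fixed, working inside the semi-simple algebra $\mathrm{TLD}_{F,n}(\delta)$ and combining the branching exact sequence of Lemma~\ref{5.2} with the internal block form of the Gram matrix from Lemma~\ref{6.7}. First I would restrict the contravariant form $\Phi_{n-2p}$ on $S(n,p)$ to $\mathrm{TLD}_{n-1}(\delta)$; it stays contravariant by Lemma~\ref{bl}. Let $W\subseteq S(n,p)\downarrow$ be the span of those cellular basis diagrams in which the dot $n$ is isolated. By Lemma~\ref{6.7}(1), $W$ is a $\mathrm{TLD}_{n-1}(\delta)$-submodule, isomorphic via the map $\iota$ of Lemma~\ref{5.2}(1) to $S(n-1,p)$ when $n>2p+1$ and to $S^{+}(2p,p)\oplus S^{-}(2p,p)$ when $n=2p+1$ (Lemma~\ref{5.2}(2)), and the restricted form on $W$ has Gram matrix exactly $G(n-1,p)$, resp.\ $\operatorname{diag}\bigl(G^{+}(2p,p),G^{-}(2p,p)\bigr)$. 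Since $\mathrm{TLD}_{F,n-1}(\delta)$ is semi-simple, Lemma~\ref{2.4} makes this matrix invertible, so the form is non-degenerate on $W$ and $S(n,p)\downarrow=W\oplus W^{\perp}$. As $W$ is the kernel of the map $\varsigma$ of Lemma~\ref{5.2}, the quotient map identifies $W^{\perp}\xrightarrow{\ \sim\ }S(n,p)\downarrow/W\cong S(n-1,p-1)$; moreover, replacing each cellular basis vector $c$ in which $n$ is not isolated by its $W^{\perp}$-component $c-\pi_{W}(c)$ is a unipotent change of basis of $S(n,p)$, and $\varsigma$ carries these components onto the cellular basis of $S(n-1,p-1)$.

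Next I would apply Schur's lemma. The restriction of $\Phi_{n-2p}$ to $W^{\perp}$ is a non-degenerate contravariant form on the absolutely irreducible module $S(n-1,p-1)$, and the space of such forms is one-dimensional, spanned by the canonical form $\Phi_{n-2p+1}$ (which is non-degenerate here because the algebra is semi-simple); hence $\Phi_{n-2p}|_{W^{\perp}}=r(n,p)\,\Phi_{n-2p+1}$ for a unique $r(n,p)\in F$, which is nonzero since $\Phi_{n-2p}$ is non-degenerate. Computing $\det G(n,p)$ in the basis adapted to $W\oplus W^{\perp}$ (where the Gram matrix is block diagonal and the change of basis has determinant $1$) and using $\dim S(n-1,p-1)=\binom{n-1}{p-1}$ from Lemma~\ref{4.4} then gives
\[
\det G(n,p)=\det G(n-1,p)\cdot r(n,p)^{\binom{n-1}{p-1}}\cdot\det G(n-1,p-1).
\]

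It remains to prove $r(n,p)=\delta-r(n-1,p)^{-1}$ for $n\ge 2p+2$. Here I would unfold one step of the Schur complement $\mathrm{Sc}=G(n,p)_{22}-G(n,p)_{21}G(n,p)_{11}^{-1}G(n,p)_{12}$, which equals $r(n,p)\,G(n-1,p-1)$ by the previous step, on the sub-block indexed by the diagrams in which $(n-1,n)$ is an arc. By Lemma~\ref{6.7}(2) the corresponding block of $G(n,p)_{22}$ is $\delta\,G(n-2,p-1)$ and that of $G(n,p)_{12}$ is $G(n-2,p-1)$ sitting below a zero block, while the matching block of $G(n,p)_{11}^{-1}=G(n-1,p)^{-1}$ is, by the inductive hypothesis applied at level $n-1$ (where the analogous Schur complement is $r(n-1,p)\,G(n-2,p-1)$), equal to $r(n-1,p)^{-1}\,G(n-2,p-1)^{-1}$. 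Multiplying out shows that this sub-block of $\mathrm{Sc}$ equals $\bigl(\delta-r(n-1,p)^{-1}\bigr)G(n-2,p-1)$; on the other hand it equals $r(n,p)\,G(n-1,p-1)_{11}=r(n,p)\,G(n-2,p-1)$ by Lemma~\ref{6.7}(1). Cancelling the invertible matrix $G(n-2,p-1)$ yields the recursion. The base case $n=2p+1$ would be handled by hand: there the block indexed by diagrams in which both $n$ and $n-1$ are isolated is empty and $W=S^{+}(2p,p)\oplus S^{-}(2p,p)$, so one computes $r(2p+1,p)$ directly, using Lemma~\ref{5.2}(2) and the identities $G^{\pm}(2p,p)=\delta\,G(2p-1,p-1)$ from the proofs of Lemmas~\ref{6.4} and \ref{6.5}.

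I expect this last step to be the main obstacle. The conceptual core — semi-simple orthogonal splitting, then Schur's lemma to produce the scalar $r(n,p)$ and hence the product formula with exponent $\binom{n-1}{p-1}$ — is robust, but extracting the continued-fraction recursion forces one to keep precise track of the nested block decomposition of Lemma~\ref{6.7} in order to isolate exactly the sub-block that produces it, and the base case $n=2p+1$ needs separate treatment because there the relevant block degenerates and a sum of two cell modules replaces a single one.
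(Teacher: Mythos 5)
Your proposal is correct and takes essentially the same route as the paper: semi-simplicity of ${\rm TLD}_{F,n}(\delta)$ splits the branching sequence of Lemma \ref{5.2}, a unipotent (determinant one) change of basis block-diagonalizes the form with the $S(n-1,p-1)$-block equal to $r(n,p)G(n-1,p-1)$ (yielding the determinant formula with exponent $\binom{n-1}{p-1}$), and the recursion for $r(n,p)$ is then read off from the block structure of Lemma \ref{6.7}. Your Schur-complement/block-inverse computation of the sub-block indexed by diagrams with $(n-1,n)$ an arc is just a cleaner repackaging of the paper's equations (6.1)--(6.8), and your remark that the recursion step genuinely requires $n\ge 2p+2$, with $r(2p+1,p)$ computed separately, matches what the paper defers to Lemma \ref{6.13}.
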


\begin{proof}
Since the algebra ${\rm TLD}_{F, n}(\delta)$ is semi-simple,
the exact sequences of Lemma \ref{5.2} split. Then there is a unique isomorphism
$$\Theta: S(n, p)\downarrow\rightarrow S(n-1, p)\bigoplus S(n-1, p-1)$$ such
that the matrix of $\Theta$ is of the form
$V(n, p)=\left(\begin{array}{cc} E & X(n, p) \\ 0 & E \\ \end{array}\right)$.
Take the transpose of its inverse and denote the matrix obtained by $W(n, p)$.
Then $W(n, p)$ converts the bilinear form of $S(n, p)$ to a bilinear form $\phi$ of $S(n-1, p)\bigoplus S(n-1, p-1)$
satisfying the property of Lemma \ref{bl}. Consquently, the Gram matrix of $\phi$ is a block diagonal one,
in which the (1, 1)-block is $G(n-1, p)$ and the (2, 2)-block is $r(n, p)G(n-1, p-1)$, where $r(n, p)\in F$. Writing in matrix form, we have
$$
G(n, p)=V(n, p)'\left(\begin{array}{cc} G(n-1, p) & 0 \\ 0 & r(n, p)G(n-1, p-1) \\ \end{array}\right)V(n, p).
$$
This gives
\begin{align}
G(n, p)_{12}=G(n-1, p)X(n, p)
\end{align} and
\begin{align}
G(n, p)_{22}&=X(n, p)'G(n-1, p)X(n, p)+r(n, p)G(n-1, p-1) \\
&=X(n, p)'G(n, p)_{12}+r(n, p)G(n-1, p-1).
\end{align}

Writing $X(n, p)$ in block form $\left(\begin{array}{cc} X(n, p)_{11} & \star \\ X(n, p)_{21} & \star \\ \end{array}\right)$ and
substituting it and that in Lemma \ref{6.7} into (6.1) yields
\begin{align}
G(n-2, p)X(n, p)_{11}+G(n-1, p)_{12}X(n, p)_{21}=0
\end{align}
and
\begin{align}
G(n-2, p-1)=G(n-1, p)_{21}X(n, p)_{11}+G(n-1, p)_{22}X(n, p)_{21}.
\end{align}
Replace $n$ by $n-1$ in (6.1) and then substitute it into (6.4). Then
\begin{align}
X(n, p)_{11}+X(n-1, p)X(n, p)_{21}=0.
\end{align}
Writing (6.2) in $n-1$ version and substituting it into (6.5) leads to
\begin{align*}
G(n-2, p-1)&=G(n-1, p)_{21}X(n, p)_{11}+\\& X(n-1, p)'G(n-2, p)X(n-1, p)X(n, p)_{21}+\\& r(n-1, p)G(n-2, p-1)X(n, p)_{21}.
\end{align*}
Combining (6.6) with the above equality yields
\begin{align}
X(n, p)_{21}=\frac{1}{r(n-1, p)}E.
\end{align}
Replace the matrices of (6.3) by their block forms. Then a direct computation leads to
$X(n, p)_{21}'=(\delta-r(n, p))E$. By taking transpose on both sides of this equality, we get
\begin{align}
X(n, p)_{21}=(\delta-r(n, p))E.
\end{align}
Now we arrive at the recurrence relation of $r(n, p)$ by combining (6.7) with (6.8).
\end{proof}

\begin{remark}
The technique of obtaining the recurrence relation of $r(n, p)$ in Lemma \ref{6.10} is similar to the case of type A used in \cite{RS}.
We write it here for completeness.
\end{remark}

Now let us determine these $r(n, p)$ by computing the starting point for the recursion.

\begin{lem}\label{6.13}
Let $n\geq 2p+1$. Then $r(n, p)=\frac{Q_{n-2p+2}}{Q_{n-2p+1}}$
\end{lem}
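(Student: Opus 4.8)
The plan is to feed the recurrence of Lemma~\ref{6.10} into the defining recurrence of the Chebychev polynomials of type D and thereby reduce the statement to a single base case. Since $Q_{m+1}(x)=xQ_m(x)-Q_{m-1}(x)$ for $m\geq 3$, we have, whenever $n\geq 2p+2$,
\[
\frac{Q_{n-2p+2}}{Q_{n-2p+1}}=\delta-\frac{Q_{n-2p}}{Q_{n-2p+1}}=\delta-\left(\frac{Q_{(n-1)-2p+2}}{Q_{(n-1)-2p+1}}\right)^{-1},
\]
which is exactly the relation $r(n,p)=\delta-1/r(n-1,p)$ of Lemma~\ref{6.10} after replacing $n$ by $n-1$ and keeping $p$ fixed. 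Hence, by induction on $n$ for each fixed $p$, the lemma reduces to the base case $r(2p+1,p)=Q_3(\delta)/Q_2(\delta)=(\delta^2-2)/\delta$ for all $p\geq 1$.

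For $p=1$ this follows from the results already in hand. The cell module $S(n-1,0)$ has a unique basis diagram (all $n-1$ dots isolated), so $\det G(n-1,0)=1$ by Lemma~\ref{6.1}; specialising Lemma~\ref{6.10} to $p=1$ then reads $\det G(n,1)=\det G(n-1,1)\cdot r(n,1)$, and together with $\det G(n,1)=Q_n(\delta)$ for $n\geq 2$ (Lemma~\ref{6.3} and its proof, which also records $\det G(2,1)=\delta^2=Q_2(\delta)$) this gives $r(n,1)=Q_n(\delta)/Q_{n-1}(\delta)$ for all $n\geq 3$, and in particular $r(3,1)=Q_3/Q_2$.

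The case $p\geq 2$ — determining $r(2p+1,p)$ directly — is the heart of the argument and the step I expect to be hardest. The available tools are: the split exact sequence of Lemma~\ref{5.2}(2), namely $S(2p+1,p)\!\downarrow\ \cong S^+(2p,p)\oplus S^-(2p,p)\oplus S(2p,p-1)$ over the semisimple algebra ${\rm TLD}_{F,2p}(\delta)$; the identifications $G^{\pm}(2p,p)=\delta\,G(2p-1,p-1)$ from the proofs of Lemma~\ref{6.5} and Corollary~\ref{6.6}; and the factorisation $G(2p+1,p)=V'\operatorname{diag}\!\bigl(G(2p,p),\,r(2p+1,p)G(2p,p-1)\bigr)V$ with $V$ unipotent upper-triangular, obtained exactly as in the proof of Lemma~\ref{6.10} (here $G(2p+1,p)_{11}=G(2p,p)=\operatorname{diag}(\delta G(2p-1,p-1),\delta G(2p-1,p-1))$ by the construction preceding Lemma~\ref{6.7}). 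From the factorisation, $r(2p+1,p)G(2p,p-1)$ is the Schur complement $G(2p+1,p)_{22}-G(2p+1,p)_{21}\,G(2p+1,p)_{11}^{-1}\,G(2p+1,p)_{12}$, and the plan is to compute enough of this Schur complement — arguing directly with the decorated parenthesis diagrams via Lemma~\ref{6.1}, since the clean block structure of Lemma~\ref{6.7} degenerates at $n=2p+1$ — in order to read off the scalar; the value $(\delta^2-2)/\delta$ rather than $(\delta^2-1)/\delta$ arises precisely because the two summands $S^{\pm}(2p,p)$ each restrict to a copy of $S(2p-1,p-1)$ carrying the rescaled form $\delta\,G(2p-1,p-1)$. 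Alternatively, one may compute $\det G(2p+1,p)$ and $\det G(2p,p-1)$ independently (using $\det G(2p,p)=\delta^{\binom{2p}{p}}(\det G(2p-1,p-1))^2$) and then solve the identity of Lemma~\ref{6.10} for $r(2p+1,p)$; in either approach the decorated-diagram bookkeeping is where the real work lies.
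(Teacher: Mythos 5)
Your overall architecture matches the paper exactly: induct on $n$ for fixed $p$ via the recurrence $r(n,p)=\delta-1/r(n-1,p)$ of Lemma~\ref{6.10}, reducing everything to the starting value $r(2p+1,p)=Q_3/Q_2=\delta-2/\delta$. The induction step you give is correct and is literally the second half of the paper's proof. The problem is that you never actually prove the base case for general $p$: you verify it only for $p=1$ (via $\det G(n,1)=Q_n$ and $\det G(n-1,0)=1$), and for $p\geq 2$ you explicitly defer the computation, describing it as ``the heart of the argument'' and ``where the real work lies'' while only listing the tools you would use. Since the recursion in $n$ never changes $p$, the $p=1$ case gives you nothing about $r(2p+1,p)$ for $p\geq 2$, so as written the proposal has a genuine gap precisely at the step that carries the content of the lemma.

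The paper closes this gap by a concrete block computation at $n=2p+1$: conjugating $G(2p+1,p)$ by a permutation matrix $P$ (reordering the $(2p+1,p)$-diagrams with $2p+1$ isolated so that even- and odd-decoration diagrams are grouped), the top-left block becomes $\operatorname{diag}\bigl(G^+(2p,p),G^-(2p,p)\bigr)=\operatorname{diag}\bigl(\delta G(2p-1,p-1),\delta G(2p-1,p-1)\bigr)$ by Lemma~\ref{6.5} and Corollary~\ref{6.6}; a diagram analysis as in Lemma~\ref{6.7} shows the adjacent off-diagonal block begins with two side-by-side copies of $G(2p-1,p-1)$ and the bottom-right block begins with $\delta G(2p-1,p-1)$; feeding this into the factorization $W(2p+1,p)G(2p+1,p)W(2p+1,p)'=\operatorname{diag}\bigl(G(2p,p),\,r(2p+1,p)G(2p,p-1)\bigr)$ forces the relevant block of $W(2p+1,p)P$ to be $\bigl(-\tfrac{1}{\delta}E,\,-\tfrac{1}{\delta}E\bigr)$, and the two $-1/\delta$ contributions give $r(2p+1,p)=\delta-\tfrac{2}{\delta}$. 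Your sketch points in this direction (and your heuristic for why the answer is $\delta-2/\delta$ rather than $\delta-1/\delta$ is the right one), but the identification of those off-diagonal blocks and the resulting solve is exactly the work that must be done. Note also that your suggested alternative, computing $\det G(2p+1,p)$ and $\det G(2p,p-1)$ independently and solving Lemma~\ref{6.10} for $r$, is circular at this point in the development: the closed formulas for these determinants (Corollary~\ref{6.14}, Proposition~\ref{6.16}) are themselves consequences of knowing $r(n,p)$, and no independent evaluation of $\det G(2p+1,p)$ is available.
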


\begin{proof}
Let $n=2p+1$. By Lemma \ref{6.7} and \ref{6.10}, there is a unique matrix $W(2p+1, p)$ such that
\begin{align}
W(2p+1, p)G(2p+1, p)W(2p+1, p)'=\left(\begin{array}{cc} G(2p, p) & 0 \\ 0 & r(2p+1, p)G(2p, p-1) \end{array} \right),
\end{align}
where $G(2p, p-1)$ has the block form $\left(\begin{array}{cc} G(2p-1, p-1) & \star \\ \star & \star \\ \end{array}\right)$.

On the other hand, by the definition of $G(2p, p)$, there exists a unique matrix
$P$ with $P^2=E$ such that the left top block of the matrix $PG(2p+1, p)P'$
being equal to $\left(\begin{array}{cc} G^+(2p, p) & 0 \\0 & G^-(2p, p)\end{array}\right)$,
or $\left(\begin{array}{cc} \delta G(2p-1, p-1) & 0 \\0 & \delta G(2p-1, p-1) \end{array}\right)$
by Lemma \ref{6.5} and Corollary \ref{6.6}.
Analyzing the left bottom block of $PG(2p+1, p)P'$ by a process similar
to that in Lemma \ref{6.7} shows that the block has the block form
$\left(\begin{array}{cc} G(2p-1, p-1) & G(2p-1, p-1) \\\star & \star\end{array}\right)$.
Now employing Lemma \ref{6.7} again, we  receive that
the right bottom block of $PG(2p+1, p)P'$ having the block form
$\left(\begin{array}{cc} \delta G(2p-1, p-1) & \star \\\star & \star\end{array}\right)$.
It is necessary to point out that $$W(2p+1, p)G(2p+1, p)W(2p+1, p)'=W(2p+1, p)PPG(2p+1, p)P'P'W(2p+1, p)'.$$
Then the equality (6.9) forces the left bottom block of $W(n, p)P$ has the block form
$\left(\begin{array}{cc}  -\frac{1}{\delta}E & -\frac{1}{\delta}E \\\star & \star\end{array}\right)$.
Substituting the block forms of matrices
$W(2p+1, p)P$ and $PG(2p+1, p)P'$ into (6.9) yields
$$r(2p+1, p)=\delta-\frac{2}{\delta}=\frac{\delta^3-2\delta}{\delta^2}=\frac{Q_3}{Q_2}.$$
This gives the starting point for the recursion.

Now suppose that $r(n, p)=\frac{Q_{n-2p+2}}{Q_{n-2p+1}}$.
Then by Lemma \ref{6.10} and the definition of the polynomial sequence $Q_k$, $$r(n+1, p)=\delta-\frac{Q_{n-2p+1}}{Q_{n-2p+2}}
=\frac{\delta Q_{n-2p+2}-Q_{n-2p+1}}{Q_{n-2p+2}}
=\frac{Q_{n-2p+3}}{Q_{n-2p+2}}.$$
We have completed the proof.
\end{proof}

\begin{cor}\label{6.14}
If $n\geq2p+1$, then the determinants of the Gram matrices satisfy the following recurrence relation
$$\det G(n, p)=
    \det G(n-1, p)\left(\frac{Q_{n-2p+2}}{Q_{n-2p+1}}\right)^{\left(\smallmatrix n-1 \\
p-1 \endsmallmatrix \right)}\det G(n-1, p-1).$$
\end{cor}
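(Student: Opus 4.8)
The plan is essentially to read off the statement by feeding the closed form for $r(n,p)$ from Lemma~\ref{6.13} into the recurrence of Lemma~\ref{6.10}. Concretely, for $n\geq 2p+1$ Lemma~\ref{6.10} gives
\[
\det G(n, p)=\det G(n-1, p)\,r(n, p)^{\binom{n-1}{p-1}}\,\det G(n-1, p-1),
\]
and Lemma~\ref{6.13}, which is valid on the same index range, supplies $r(n, p)=\dfrac{Q_{n-2p+2}}{Q_{n-2p+1}}$. Substituting the latter into the former, and observing that the exponent $\binom{n-1}{p-1}$ is carried through unchanged, yields exactly
\[
\det G(n, p)=\det G(n-1, p)\left(\frac{Q_{n-2p+2}}{Q_{n-2p+1}}\right)^{\binom{n-1}{p-1}}\det G(n-1, p-1),
\]
which is the claim.

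The one point I would take care to address is the coefficient ring. Both Lemma~\ref{6.10} and Lemma~\ref{6.13} are established after specializing to the field $F$ of fractions of $R[q]$ via $\delta\mapsto q+q^{-1}$, where ${\rm TLD}_{F,n}(\delta)$ is semi-simple and the split exact sequences of Lemma~\ref{5.2} are available. On the other hand, each $\det G(n,p)$ is, by the description of the Gram matrix entries in Lemma~\ref{6.1} (each entry is $0$ or a power of $\delta$), a genuine polynomial in $\delta$ with coefficients in $R$. Hence the displayed equality, once known in $F$, is an identity of rational functions in $\delta$ both of whose sides are in fact polynomials, so it already holds in $R[\delta]$ and therefore persists under any specialization of $\delta$ to an element of an arbitrary commutative ring with identity. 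This is precisely the form in which the corollary will be applied when deducing the semi-simplicity criterion for ${\rm TLD}_n(\delta)$.

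I do not expect any real obstacle at this stage: all of the substantive content — splitting the exact sequences of Lemma~\ref{5.2} in the semi-simple setting, extracting the block identities from Lemma~\ref{6.7}, solving the resulting matrix equations for $r(n,p)$, and identifying it with a ratio of the type~D Chebyshev polynomials $Q_k$ — has already been carried out in Lemmas~\ref{6.7}, \ref{6.10} and \ref{6.13}. The only thing left to verify is the bookkeeping, namely that the two ingredients hold on the common range $n\geq 2p+1$ and that no exponent is mishandled in the substitution, after which the statement is immediate.
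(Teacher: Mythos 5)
Your proposal is correct and matches the paper's (implicit) argument: Corollary~\ref{6.14} is obtained exactly by substituting the closed form $r(n,p)=Q_{n-2p+2}/Q_{n-2p+1}$ from Lemma~\ref{6.13} into the recurrence of Lemma~\ref{6.10}, both valid for $n\geq 2p+1$. Your additional remark that the resulting identity of polynomials in $\delta$ persists under specialization is a sensible clarification but not a departure from the paper's route.
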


Furthermore, we want to give a closed formula for the Gram-determinant. To this end, we need the following lemma.

\begin{lem}\label{6.15}
For all $n\geq2p$,

$$\prod_{r=0}^{p-1}\left(\frac{Q_{n-p-r+1}}{Q_{p-r}}\right)^{\left(\smallmatrix n \\
r \endsmallmatrix \right)}=\prod_{r=0}^{p-1}\left(\frac{Q_{n-p-r}}{Q_{p-r}}\right)^{\left(\smallmatrix n-1 \\
r \endsmallmatrix \right)}\left(\frac{Q_{n-2p+2}}{Q_{n-2p+1}}\right)^{\left(\smallmatrix n-1 \\
p-1 \endsmallmatrix \right)}\prod_{r=0}^{p-2}\left(\frac{Q_{n-p-r+1}}{Q_{p-r-1}}\right)^{\left(\smallmatrix n-1 \\
r \endsmallmatrix \right)}$$
\end{lem}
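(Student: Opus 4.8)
The identity in Lemma~\ref{6.15} is purely combinatorial: both sides are finite products of the symbols $Q_m$ raised to integer exponents, and the assertion is an equality in the free abelian group generated by $\{Q_m\}$; once it is established at this formal level it holds for the genuine Chebychev polynomials of type D. In particular the defining recursion of the $Q_m$ plays no role, and the only tool needed is Pascal's rule $\binom{n}{r}=\binom{n-1}{r}+\binom{n-1}{r-1}$ together with the convention $\binom{n-1}{-1}=0$. (The case $p=0$ is the trivial identity $1=1$, so assume $p\ge 1$.)

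First I would apply Pascal's rule to each exponent $\binom{n}{r}$ on the left-hand side, split the resulting factor, and reindex the part carrying the exponents $\binom{n-1}{r-1}$ by the substitution $r\mapsto r+1$ (the $r=0$ term being trivial). This rewrites the left-hand side as
\[
\left[\,\prod_{r=0}^{p-1}\left(\frac{Q_{n-p-r+1}}{Q_{p-r}}\right)^{\binom{n-1}{r}}\right]\left[\,\prod_{r=0}^{p-2}\left(\frac{Q_{n-p-r}}{Q_{p-r-1}}\right)^{\binom{n-1}{r}}\right].
\]
Introducing the abbreviations
\[
N=\prod_{r=0}^{p-2}Q_{n-p-r+1}^{\binom{n-1}{r}},\qquad M=\prod_{r=0}^{p-2}Q_{n-p-r}^{\binom{n-1}{r}},\qquad A=\prod_{r=0}^{p-1}Q_{p-r}^{\binom{n-1}{r}},\qquad B=\prod_{r=0}^{p-2}Q_{p-r-1}^{\binom{n-1}{r}},
\]
one isolates the $r=p-1$ term in the numerator of the first bracket to see that it equals $N\,Q_{n-2p+2}^{\binom{n-1}{p-1}}/A$, while the second bracket equals $M/B$; hence the left-hand side equals $MN\,Q_{n-2p+2}^{\binom{n-1}{p-1}}/(AB)$.

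Next I would treat the right-hand side the same way: its first product equals $M\,Q_{n-2p+1}^{\binom{n-1}{p-1}}/A$ (again isolating the $r=p-1$ numerator term) and its third product equals $N/B$, so the right-hand side equals
\[
\frac{M\,Q_{n-2p+1}^{\binom{n-1}{p-1}}}{A}\cdot\left(\frac{Q_{n-2p+2}}{Q_{n-2p+1}}\right)^{\binom{n-1}{p-1}}\cdot\frac{N}{B}=\frac{MN\,Q_{n-2p+2}^{\binom{n-1}{p-1}}}{AB}.
\]
Comparing with the previous paragraph gives the claim. The whole argument is bookkeeping rather than substance; the only place demanding care is keeping the index shifts consistent — the reindexing $r\mapsto r+1$ and the extraction of the two boundary terms at $r=p-1$ — especially because, when $n$ is close to $2p$, the numerator indices $n-p-r+1$ may coincide with the denominator indices $p-r$. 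This causes no harm, since we are manipulating an identity of formal products: overlapping symbols simply have their exponents summed. As an alternative, and as a check, one may instead verify directly that for every integer $m$ the exponent of $Q_m$ on the two sides agrees, each such verification reducing to one or two applications of Pascal's rule.
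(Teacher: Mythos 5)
Your proof is correct and follows essentially the same route as the paper's: a direct verification by Pascal's rule and reindexing of the products. The only difference is organizational — you reduce both sides to a common expression $MNQ_{n-2p+2}^{\binom{n-1}{p-1}}/(AB)$, whereas the paper checks that the numerators and the denominators of the two sides agree separately — but the substance is identical.
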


\begin{proof}
The equality is proved by a direct computation. The following calculation shows that the denominators agree.
\begin{eqnarray*}
& &\prod_{r=0}^{p-1}{Q_{p-r}}^{\left(\smallmatrix n-1 \\
r \endsmallmatrix \right)}\prod_{r=0}^{p-2}{Q_{p-r-1}}^{\left(\smallmatrix n-1 \\
r \endsmallmatrix \right)}\\ &=&
\prod_{r=0}^{p-1}{Q_{p-r}}^{\left(\smallmatrix n-1 \\
r \endsmallmatrix \right)}\prod_{r=1}^{p-1}{Q_{p-r}}^{\left(\smallmatrix n-1 \\
r-1 \endsmallmatrix \right)}\\ &=&
Q_p\prod_{r=1}^{p-1}{Q_{p-r}}^{\left(\smallmatrix n \\
r \endsmallmatrix \right)}\\ &=&
\prod_{r=0}^{p-1}{Q_{p-r}}^{\left(\smallmatrix n \\
r \endsmallmatrix \right)}
\end{eqnarray*}

And the following calculation shows that the numerators agree.

\begin{eqnarray*}
& &\prod_{r=0}^{p-1}{Q_{n-p-r}}^{\left(\smallmatrix n-1 \\
r \endsmallmatrix \right)}\left(\frac{Q_{n-2p+2}}{Q_{n-2p+1}}\right)^{\left(\smallmatrix n-1 \\
p-1 \endsmallmatrix \right)}\prod_{r=0}^{p-2}{Q_{n-p-r+1}}^{\left(\smallmatrix n-1 \\
r \endsmallmatrix \right)}\\
&=&{Q_{n-2p+1}}^{\left(\smallmatrix n-1 \\
p-1 \endsmallmatrix \right)}\prod_{r=1}^{p-1}{Q_{n-p-r+1}}^{\left(\smallmatrix n-1 \\
r-1 \endsmallmatrix \right)}\left(\frac{Q_{n-2p+2}}{Q_{n-2p+1}}\right)^{\left(\smallmatrix n-1 \\
p-1 \endsmallmatrix \right)}\prod_{r=0}^{p-2}{Q_{n-p-r+1}}^{\left(\smallmatrix n-1 \\
r \endsmallmatrix \right)}\\
&=&{Q_{n-2p+1}}^{\left(\smallmatrix n-1 \\
p-1 \endsmallmatrix \right)}\left(\frac{Q_{n-2p+2}}{Q_{n-2p+1}}\right)^{\left(\smallmatrix n-1 \\
p-1 \endsmallmatrix \right)}\prod_{r=1}^{p-2}{Q_{n-p-r+1}}^{\left(\smallmatrix n-1 \\
r-1 \endsmallmatrix \right)}\times\\
& &\prod_{r=1}^{p-2}{Q_{n-p-r+1}}^{\left(\smallmatrix n-1 \\
r \endsmallmatrix \right)}{Q_{n-2p+2}}^{\left(\smallmatrix n-1 \\
p-2 \endsmallmatrix \right)}Q_{n-p+1}\\
&=&Q_{n-p+1}\prod_{r=1}^{p-2}{Q_{n-p-r+1}}^{\left(\smallmatrix n \\
r \endsmallmatrix \right)}{Q_{n-2p+2}}^{\left(\smallmatrix n \\
p-1 \endsmallmatrix \right)}\\
&=&\prod_{r=0}^{p-1}{Q_{n-p-r+1}}^{\left(\smallmatrix n \\
r \endsmallmatrix \right)}
\end{eqnarray*}
Then we complete the proof.
\end{proof}

By Lemma \ref{6.15}, we can obtain the following proposition by a direct computation. We omit the details here.

\begin{prop}\label{6.16}
If $n\geq2p$, then

$$\det G(n, p)=
   \prod_{r=0}^{p-1}\left(\frac{Q_{n-p-r+1}}{Q_{p-r}}\right)^{\left(\smallmatrix n \\
r \endsmallmatrix \right)}\prod_{s=1}^{p-1}\left[\prod_{r=0}^{p-s-1}\left(\frac{Q_{p-s-r+2}}{Q_{p-s-r}}\right)^{\left(\smallmatrix 2\left(p-s\right)+1\\
r \endsmallmatrix \right)}\right]^{\left(\smallmatrix n-2(p-s+1)  \\
s-1 \endsmallmatrix \right)}$$
\end{prop}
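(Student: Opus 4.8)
The plan is to prove the formula by a double induction, with the inductive step supplied by the recurrence of Corollary~\ref{6.14} and the remaining base case $n=2p$ handled by Lemma~\ref{6.5} together with the relation $\det G(2p,p)=(\det G^+(2p,p))^2$ recorded just before Lemma~\ref{6.7}. Everything takes place over the field $F$ of Lemma~\ref{6.10}, where all the $Q_k(\delta)$ are nonzero, so the rational expressions below make sense. For bookkeeping it is convenient to write the claimed right-hand side as $F(n,p)\,S(n,p)$, where
$$F(n,p)=\prod_{r=0}^{p-1}\left(\frac{Q_{n-p-r+1}}{Q_{p-r}}\right)^{\left(\smallmatrix n\\ r\endsmallmatrix\right)}$$
is the first product and $S(n,p)$ is the double product over $s$. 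Re-indexing the inner bracket of $S(n,p)$ by $m=p-s$ (so $s-1=p-m-1$ and $n-2(p-s+1)=n-2m-2$) turns it into $S(n,p)=\prod_{m=1}^{p-1}B_m^{\left(\smallmatrix n-2m-2\\ p-m-1\endsmallmatrix\right)}$, where $B_m=\prod_{r=0}^{m-1}(Q_{m-r+2}/Q_{m-r})^{\left(\smallmatrix 2m+1\\ r\endsmallmatrix\right)}$ is visibly independent of $n$. Throughout we use the convention $Q_1=1$, so that every $Q_k$ occurring in $F$ and $S$ is defined.

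The crucial observation is that $F(n,p)\,S(n,p)$ satisfies the same recurrence as $\det G(n,p)$, i.e.
$$F(n,p)S(n,p)=F(n-1,p)S(n-1,p)\left(\frac{Q_{n-2p+2}}{Q_{n-2p+1}}\right)^{\left(\smallmatrix n-1\\ p-1\endsmallmatrix\right)}F(n-1,p-1)S(n-1,p-1)$$
for $n\geq 2p+1$. For the $F$-part this is precisely Lemma~\ref{6.15}: its left-hand side is $F(n,p)$, and on its right-hand side one recognizes the first bracket as $F(n-1,p)$, the third bracket as $F(n-1,p-1)$, and the middle factor as $r(n,p)^{\left(\smallmatrix n-1\\ p-1\endsmallmatrix\right)}=(Q_{n-2p+2}/Q_{n-2p+1})^{\left(\smallmatrix n-1\\ p-1\endsmallmatrix\right)}$ by Lemma~\ref{6.13}. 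For the $S$-part it remains to check $S(n,p)=S(n-1,p)S(n-1,p-1)$, which, in the re-indexed form above, is an entrywise consequence of Pascal's rule $\left(\smallmatrix N\\ k\endsmallmatrix\right)=\left(\smallmatrix N-1\\ k\endsmallmatrix\right)+\left(\smallmatrix N-1\\ k-1\endsmallmatrix\right)$ applied to the exponents $\left(\smallmatrix n-2m-2\\ p-m-1\endsmallmatrix\right)$, the top index $m=p-1$ contributing $B_{p-1}^{\left(\smallmatrix n-2p\\ 0\endsmallmatrix\right)}=B_{p-1}$ on both sides because $n\geq 2p+1$. Together with Corollary~\ref{6.14}, this shows that the proposition at $(n-1,p)$ and at $(n-1,p-1)$ implies the proposition at $(n,p)$.

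It remains to treat the base cases. For $p=0$ both $\det G(n,0)$ and the empty product $F(n,0)S(n,0)$ equal $1$. For the boundary $n=2p$ we use $\det G(2p,p)=(\det G^+(2p,p))^2=\delta^{\left(\smallmatrix 2p\\ p\endsmallmatrix\right)}\bigl(\det G(2p-1,p-1)\bigr)^2$ by Lemma~\ref{6.5}; since $2p-1\geq 2(p-1)$, the value of $\det G(2p-1,p-1)$ is already known from the induction on $p$, and substituting $F(2p-1,p-1)S(2p-1,p-1)$ for it reduces the desired equality to a polynomial identity in the $Q_k$, which one checks directly from $Q_2=\delta^2$ and $Q_1=1$. (As a sanity check, $p=1$ gives $\det G(n,1)=Q_n$, in agreement with Lemma~\ref{6.3}.) The induction is then organized as an outer induction on $p$ and, for fixed $p$, an inner induction on $n\geq 2p$: the inner base $n=2p$ is the computation just described, and for $n\geq 2p+1$ the inner step is the recurrence-matching of the previous paragraph, with $(n-1,p)$ covered by the inner hypothesis and $(n-1,p-1)$ by the outer hypothesis (note $n-1\geq 2p=2(p-1)+2>2(p-1)$).

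The main obstacle is the bookkeeping in the recurrence-matching step: correctly identifying the three factors on the right-hand side of Lemma~\ref{6.15} with $F(n-1,p)$, $r(n,p)^{\left(\smallmatrix n-1\\ p-1\endsmallmatrix\right)}$ and $F(n-1,p-1)$, and performing the re-index $m=p-s$ so that the second product $S$ splits cleanly through Pascal's rule. Once these index identifications are in place, the $p=0$ case and the polynomial identity at $n=2p$ are routine, so I would not grind through them in detail.
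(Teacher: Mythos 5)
Your proposal is correct and takes essentially the same route as the paper: the paper likewise obtains the closed formula from the recurrence of Corollary~\ref{6.14} together with the compatibility identity of Lemma~\ref{6.15}, dismissing the remaining induction as ``a direct computation'' whose details are omitted. Your write-up simply supplies that omitted bookkeeping --- the Pascal-rule splitting of the second product and the $n=2p$ base case via Lemma~\ref{6.5} and $\det G(2p,p)=(\det G^+(2p,p))^2$ --- and these steps check out.
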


\begin{example}
Using the formula above, we have
\begin{eqnarray*}
\det G(8, 3)&=&
   \prod_{r=0}^{2}\left(\frac{Q_{6-r}}{Q_{3-r}}\right)^{\left(\smallmatrix 8 \\
r \endsmallmatrix \right)}\prod_{s=1}^{2}\left[\prod_{r=0}^{2-s}\left(\frac{Q_{5-s-r}}{Q_{3-s-r}}\right)^{\left(\smallmatrix 2\left(3-s\right)+1\\
r \endsmallmatrix \right)}\right]^{\left(\smallmatrix 2+2\left(s-1\right)  \\
s-1 \endsmallmatrix \right)}\\
&=&\prod_{r=0}^{2}\left(\frac{Q_{6-r}}{Q_{3-r}}\right)^{\left(\smallmatrix 8 \\
r \endsmallmatrix \right)}\left[\prod_{r=0}^{1}\left(\frac{Q_{4-r}}{Q_{2-r}}\right)^{\left(\smallmatrix 5 \\
r \endsmallmatrix \right)}\right]^{\left(\smallmatrix 2  \\
0 \endsmallmatrix \right)}\left[\prod_{r=0}^{0}\left(\frac{Q_{3-r}}{Q_{1-r}}\right)^{\left(\smallmatrix 3 \\
r \endsmallmatrix \right)}\right]^{\left(\smallmatrix 4  \\
1 \endsmallmatrix \right)}\\
&=&\frac{Q_{3}^{8}Q_{4}^{29}Q_{5}^{8}Q_{6}}{Q_{2}^{9}}\\
&=&\delta^{58}\left(\delta^2-2\right)^{8}\left(\delta^2-3\right)^{29}\left(\delta^4-4\delta^2+2\right)^{8}\left(\delta^4-5\delta^2+5\right)
\end{eqnarray*}
\end{example}

\smallskip

Now we are in a position to give the main result of this paper.

\begin{thm}\label{6.17}
Let $R$ be a field with $Char R\neq 2$, $\delta\in R$ and $n\geq 4$ a positive integer.
Let ${\rm TLD}_n(\delta)$ be a Temperley-Lieb algebra of type D over $R$.
Then ${\rm TLD}_n(\delta)$ is semi-simple if and only if  $P_s(\delta)\neq 0$ for $1<s\leq n$ and $Q_t(\delta)\neq 0$ for $2<t\leq n$.
\end{thm}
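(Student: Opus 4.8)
The plan is to apply Lemma~\ref{2.4}(4): $\mathrm{TLD}_n(\delta)$ is semi-simple if and only if $\det G(\lambda)\neq 0$ for every $\lambda\in\Lambda$. I would split $\Lambda$ into its three families of cell modules and dispose of each in turn. The ``dotted'' labels $\lambda=\dot k$ carry the Gram matrices $\dot G(n,p)$; by Lemma~\ref{6.8}, $\det\dot G(n,p)=\delta^{\,d}\det\mathcal G(n,p)$ with $d=\dim\mathcal S(n,p)\geq 1$ for the relevant $p\geq 1$, so these determinants are simultaneously nonzero exactly when $\delta\neq 0$ and $\det\mathcal G(n,p)\neq 0$ for every relevant $p$. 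The second condition is the classical semi-simplicity criterion for the type $A$ algebra $\mathrm{TL}_n(\delta)$, which one reads off from Lemma~\ref{6.2} and Lemma~\ref{as} by the usual closed-form computation and which holds if and only if $P_s(\delta)\neq 0$ for $2\leq s\leq n$; since $P_2(x)=x$, this already forces $\delta\neq 0$. When $n$ is even there are moreover the labels $0^{\pm}$ with matrices $G^{\pm}(2p,p)$, $p=n/2$: by Lemma~\ref{6.5} and Corollary~\ref{6.6}, $\det G^{\pm}(2p,p)=\delta^{\frac12\binom{2p}{p}}\det G(2p-1,p-1)$, and here $G(2p-1,p-1)$ is a Gram matrix of the third family evaluated at $\lambda=1$, so $\det G(2p-1,p-1)=Q_{n-1}(\delta)$ by Lemma~\ref{6.3}; thus the $0^{\pm}$ family contributes only the conditions $\delta\neq 0$ and $Q_{n-1}(\delta)\neq 0$.

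It remains to treat the ``undotted'' labels $\lambda=k\in\{1,\dots,n\}$, with Gram matrices $G(n,p)$, $p=(n-k)/2$ running over $0\leq p\leq\lfloor(n-1)/2\rfloor$; the case $p=0$ is trivial as $G(n,0)=(1)$. For $p\geq 1$ I would substitute these into the closed formula of Proposition~\ref{6.16}, which presents $\det G(n,p)$ as a product of integer powers of the polynomials $Q_t(\delta)$. Here I would use two elementary facts about the type $D$ Chebychev polynomials: that $Q_{2j}(x)=x^2R_{2j}(x)$ and $Q_{2j+1}(x)=xR_{2j+1}(x)$ for polynomials $R_t$ with $R_t(0)\neq 0$, pairwise coprime and coprime to $x$; and that $Q_3=P_4$ identically. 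Since $\det G(n,p)$ is an honest polynomial in $\delta$, substituting these factorizations into Proposition~\ref{6.16} and using unique factorization forces $\det G(n,p)=c\,\delta^{\,E}\prod_{t}R_t(\delta)^{a_t}$ with $c$ a nonzero constant, $E\geq 0$ and every $a_t\geq 0$, the $t$ ranging over $\{3,\dots,n\}$. Consequently, in the presence of $\delta\neq 0$ (which we already have from the dotted family), $\det G(n,p)(\delta)\neq 0$ is equivalent to $Q_t(\delta)\neq 0$ for all $t$ in $T_{n,p}:=\{t:a_t\geq 1\}$. The key remaining point is to show that $\bigcup_p T_{n,p}$, the union over all admissible $p$ (together with the index $n-1$ from the $0^{\pm}$ family when $n$ is even), equals $\{3,\dots,n\}$. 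Granting this, the three families together impose precisely $\delta\neq 0$, $P_s(\delta)\neq 0$ for $2\leq s\leq n$, and $Q_t(\delta)\neq 0$ for $3\leq t\leq n$, which is the assertion of the theorem; note that the condition $t=3$ is in fact redundant in view of $Q_3=P_4$.

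The step I expect to be the genuine obstacle is this identification of $\bigcup_p T_{n,p}$: checking from the binomial coefficients in Proposition~\ref{6.16} that $a_t\geq 0$ always and that $a_t\geq 1$ for at least one admissible $p$ whenever $3\leq t\leq n$. For the larger indices this is transparent, since $Q_t$ occurs in the numerator of the first product of Proposition~\ref{6.16} as $Q_{n-p-r+1}$, so taking $p=\lceil(n-t+2)/2\rceil$ places $t$ among the indices $\{n-2p+2,\dots,n-p+1\}$, and that value of $p$ is admissible precisely because $t\geq 3$. The delicate range is $t$ close to $3$: there $Q_t$ enters through the second product of Proposition~\ref{6.16} as $Q_{p-s-r+2}$ (and, for $n$ odd and $t=3$, as the bottom index $Q_{n-2p+2}$ of the first product when $p=\lfloor(n-1)/2\rfloor$), and one must track the denominator factors $Q_{p-s-r}$ and $Q_{p-r}$, whose indices never exceed $p$, and show they cannot cancel it --- a finite but intricate binomial computation of the same flavour as the one underlying Proposition~\ref{6.16}. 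An alternative is to argue inductively on $n$ from the recurrence $\det G(n,p)=\det G(n-1,p)\bigl(Q_{n-2p+2}/Q_{n-2p+1}\bigr)^{\binom{n-1}{p-1}}\det G(n-1,p-1)$ of Corollary~\ref{6.14}, with Lemma~\ref{6.3} and $\det G(n,1)=Q_n(\delta)$ as the base case and the identities $\det G(2p,p)=(\det G^{+}(2p,p))^2$ and Lemma~\ref{6.5} to cover the boundary case $2p=n-1$; this requires comparable care with the above factorizations of the $Q_t$ but avoids the explicit closed formula. Either way, the proof concludes by intersecting the constraints coming from the three families.
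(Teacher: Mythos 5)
Your overall route is the paper's route (Lemma \ref{2.4}(4) plus the Gram determinant computations of Section 6), but as written it has concrete errors and leaves the decisive step open. First, you misread Lemma \ref{6.3}: in the paper's notation the second index of $G(n,p)$ is the number of arcs, so Lemma \ref{6.3} computes the determinant for the \emph{one-arc} module ($\lambda=n-2$), not for $\lambda=1$. The matrix $G(2p-1,p-1)$ appearing in Lemma \ref{6.5} and Corollary \ref{6.6} is the $\lambda=1$ Gram matrix of ${\rm TLD}_{2p-1}$, and its determinant is not $Q_{n-1}(\delta)$ except when $n=4$; for $n=6$ it is $\det G(5,2)$, the determinant of the $10\times10$ matrix displayed in the paper, of degree $20$ in $\delta$. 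So your identification of what the labels $0^{\pm}$ contribute is wrong (for sufficiency this is harmless, since one can feed $\det G(2p-1,p-1)$ into Proposition \ref{6.16}, but it cannot be used as stated in the necessity bookkeeping). Second, the ``elementary fact'' that the deflated polynomials $R_t$ are pairwise coprime is false: since $Q_m(2\cos\theta)=4\cos\theta\cos((m-1)\theta)$, distinct $Q$'s share nonzero roots (e.g.\ $Q_4=\delta^2(\delta^2-3)$ and $Q_{10}$ both vanish at $\delta=\sqrt3$), and likewise $P_4$ and $P_8$ share $\sqrt2$. Hence your unique-factorization argument, and in particular the asserted equivalence ``$\det G(n,p)(\delta)\neq0\iff Q_t(\delta)\neq0$ for $t\in T_{n,p}$,'' does not hold in the form you state it. Relatedly, the type $A$ statement you quote, that ${\rm TL}_n(\delta)$ is semisimple iff $P_s(\delta)\neq0$ for $2\le s\le n$, fails for odd $n$ at $\delta=0$ (${\rm TL}_3(0)$ is semisimple); it is only correct here because $\delta\neq0$ is forced separately by the dotted family, and its ``only if'' half is exactly the same kind of no-cancellation claim you leave unproved for the $Q$'s.

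That unproved claim is the genuine gap: you explicitly defer the verification that every $Q_t$ with $4\le t\le n$ (and every $P_s$) survives cancellation in some Gram determinant of ${\rm TLD}_n(\delta)$, and this is precisely the content of the forward implication of the theorem, so the proposal does not establish it. Note also that for the backward implication none of the factorization machinery is needed: once $\delta\neq0$ (so $Q_2=\delta^2\neq0$) and $Q_t(\delta)\neq0$ for $3\le t\le n$, every factor occurring in Proposition \ref{6.16}, in Lemmas \ref{6.5}--\ref{6.8} and in the type $A$ formula is a nonzero scalar, so all Gram determinants are nonzero; this is how the paper argues sufficiency. For necessity the paper does not use Proposition \ref{6.16} at all: it reads off $\delta\neq0$, $P_n(\delta)\neq0$, $Q_n(\delta)\neq0$ from $\det\dot G(n,1)=\delta^{\dim}\det\mathcal G(n,1)$ and $\det G(n,1)=Q_n(\delta)$ (Lemmas \ref{6.2}, \ref{6.3}, \ref{6.8}) and then descends with the recursion of Corollary \ref{6.14} to capture the lower indices. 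Your suggested ``alternative inductive route'' is essentially that argument, and it is the part you would have to carry out in detail for your proof to be complete.
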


\begin{proof}
If ${\rm TLD}_n(\delta)$ is semi-simple, then by Lemma \ref{2.4} each cell module is simple and
the determinant of its Gram matrix is nonzero. Of course, $\det G(n, 1)\neq 0$ and $\det \mathcal{G}(n, 1)\neq 0$.
By Lemmas \ref{6.2}, \ref{6.3} and \ref{6.8}
this implies that $\delta\neq 0$, $P_n(\delta)\neq 0$ and $Q_n(\delta)\neq 0$.
Furthermore, computing $\det G(n, 1)$ by Corollary \ref{6.14} shows $\det G(s, 1)\neq 0$ for $s=3, \cdots, n-1$,
that is, $Q_t(\delta)\neq 0$ for $t=3, \cdots, n-1$. Similarly, we achieve $P_s(\delta)\neq 0$ for $s=3, \cdots, n-1$.

On the contrary, if $P_s(\delta)\neq 0$ for $1<s\leq n$ and $Q_t(\delta)\neq 0$ for $2<t\leq n$,
then Lemmas 6.2-6.6, \ref{6.8} and Proposition \ref{6.16} show that all the determinants of
Gram matrices of cell modules are nonzero, that is, the algebra ${\rm TLD}_n(\delta)$ is semi-simple by Lemma \ref{2.4}.
\end{proof}

\medskip

\section{Forked Temperley-Lieb algebras}

In order to study intermediate subfactors, Grossman \cite{G3} introduced the
so-called forked Temperley-Lieb algebras. In this section, we investigate the quasi-heredity,
semi-simplicity of these algebras.

In fact, if we add a relation $e_{\bar{1}}e_1=0$ in Definition \ref{3.1}, then we get a quotient
algebra of  ${\rm TLD}_n(\delta)$, which is the forked Temperley-Lieb algebra.
We refer the reader to \cite{G3} for details. Denote the algebra by ${\rm FTL}_n(\delta)$.
Let us first consider the graphical basis of it. In fact, $e_{\bar{1}}e_1$ is the following diagram

\begin{figure}[H]
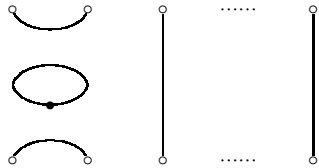

    \[
    \xy
    (35,0)*{\scriptstyle}="1b";
    (45,0)*{\scriptstyle}="2b";
    (35,10)*{\scriptstyle\circ}="1f";
    (45,10)*{\scriptstyle\circ}="2f";
    (55,10)*{\scriptstyle\circ}="3f";
    (65,10)*{\scriptstyle\cdots\cdots}="4f";
    (75,10)*{\scriptstyle\circ}="5f";
    (35,-10)*{\scriptstyle\circ}="1c";
    (45,-10)*{\scriptstyle\circ}="2c";
    (55,-10)*{\scriptstyle\circ}="3c";
    (65,-10)*{\scriptstyle\cdots\cdots}="4c";
    (75,-10)*{\scriptstyle\circ}="5c";
    (40,-2.8)*{\scriptstyle\bullet};
    "1b"; "2b" **\crv{(36,3.5) & (44,3.5)};
    "1b"; "2b" **\crv{(36,-3.5) & (44,-3.5)};
    "1f"; "2f" **\crv{(36,6.5) & (44,6.5)};
    "5c"; "5f" **\dir{-};
    "1c"; "2c" **\crv{(36,-6.5) & (44,-6.5)};
    "3f"; "3c" **\dir{-};
    \endxy
    \]
    \caption{$e_{\bar{1}}e_1$}
	\label{Fig14}
\end{figure}
\noindent Thus the ideal generated by $e_{\bar{1}}e_1$ has a basis, which consists of all the first type diagrams in Lemma \ref{3.3}.
Consequently, thinking of ${\rm FTL}_n(\delta)$ as a diagram algebra, all the second type diagrams form a basis of it.

\begin{lem}
$\dim {\rm FTL}_n(\delta)=\frac{1}{2}\left(\begin{array}{c}2n \\n \\\end{array}\right)$
\end{lem}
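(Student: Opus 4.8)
The excerpt has already done the essential work: thinking of ${\rm FTL}_n(\delta)$ as a diagram algebra, the set of all second-type diagrams of Lemma~\ref{3.3} is an $R$-basis, so the plan is simply to count these diagrams. I would do this by subtraction. The first-type diagrams span the ideal generated by $e_{\bar 1}e_1$, so if $N$ denotes their number then $\dim {\rm FTL}_n(\delta) = \dim {\rm TLD}_n(\delta) - N$.

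To evaluate $N$: a first-type diagram is a Temperley-Lieb $n$-diagram that is not the identity, together with one decorated circuit. By the removing rules of Figure~\ref{Fig4} (two decorations on a curve cancel, and a decoration is absorbed by a decorated circuit), any configuration of decorated circuits reduces to exactly one decorated circuit, so erasing the circuit gives a bijection between first-type diagrams and non-identity Temperley-Lieb $n$-diagrams. The number of Temperley-Lieb $n$-diagrams is the Catalan number $\frac{1}{n+1}\binom{2n}{n}$ (planar perfect matchings of the $2n$ boundary points), hence $N = \frac{1}{n+1}\binom{2n}{n} - 1$. Plugging this and the given value $\dim {\rm TLD}_n(\delta) = \frac{n+3}{2(n+1)}\binom{2n}{n} - 1$ into the displayed equation, the two constants cancel and
\[
\dim {\rm FTL}_n(\delta) = \left(\frac{n+3}{2(n+1)} - \frac{1}{n+1}\right)\binom{2n}{n} = \frac{1}{2}\binom{2n}{n}.
\]

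As an independent check I would also count the second-type diagrams directly from the cellular basis of Section~5: they are exactly the $C^{\lambda}_{S,T}$ with $\lambda \in \{1,2,\dots,n\}\cup\{0^+,0^-\}$ and $S,T \in M(\lambda)$, so their number is $\sum_{\lambda}|M(\lambda)|^2$. For $n$ odd this is $\sum_{p=0}^{(n-1)/2}\binom{n}{p}^2$ by Lemma~\ref{4.4}, which equals $\frac12\binom{2n}{n}$ by the Vandermonde identity $\sum_{p=0}^{n}\binom{n}{p}^2=\binom{2n}{n}$ together with the symmetry $\binom{n}{p}=\binom{n}{n-p}$. For $n=2m$ even one gets $\sum_{p=0}^{m-1}\binom{n}{p}^2 + |M(0^+)|^2 + |M(0^-)|^2$, and since $|M(0^+)|=|M(0^-)|=\frac12\binom{n}{m}$ by Lemmas~\ref{4.4} and~\ref{4.7}, the same Vandermonde identity again yields $\frac12\binom{2n}{n}$.

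Neither route involves anything beyond routine arithmetic. The only point that deserves a careful sentence is the uniqueness of the decorated-circuit normal form in the count of $N$ --- i.e. that the removing rules collapse every decorated-circuit configuration to precisely one, so that first-type diagrams biject with non-identity Temperley-Lieb diagrams and $N$ is not overcounted --- together with, in the second route, the bookkeeping that $M(0^+)$ and $M(0^-)$ each contain exactly half of the $(n,m)$-decorated parenthesis diagrams; both are immediate from Figure~\ref{Fig4} and Lemma~\ref{4.4}, respectively.
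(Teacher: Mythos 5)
Your proposal is correct; the paper in fact states this lemma without proof, treating it as an immediate count once it has observed that the images of the second-type diagrams of Lemma \ref{3.3} form a basis of ${\rm FTL}_n(\delta)$, so either of your routes legitimately fills the gap. Your primary (subtraction) route is sound: first-type diagrams are by definition non-identity Temperley--Lieb $n$-diagrams with a single decorated circuit attached, so their number is the Catalan number $\frac{1}{n+1}\binom{2n}{n}$ minus $1$, and subtracting from the quoted $\dim{\rm TLD}_n(\delta)=\frac{n+3}{2(n+1)}\binom{2n}{n}-1$ gives $\frac12\binom{2n}{n}$; note, though, that this route inherits its validity from the dimension formula for ${\rm TLD}_n(\delta)$, which the paper itself only quotes. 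Your second route is the one closest in spirit to the paper's own setup and is self-contained: the second-type diagrams are exactly the $C^{\lambda}_{S,T}$ with $\lambda$ ranging over the non-dotted elements of $\Lambda$ (those of the same parity as $n$), so their number is $\sum_{\lambda}|M(\lambda)|^2$, and your evaluation via Lemma \ref{4.4}, Lemma \ref{4.7} (giving $|M(0^{+})|=|M(0^{-})|=\frac12\binom{n}{n/2}$ when $n$ is even), and the identity $\sum_{p}\binom{n}{p}^2=\binom{2n}{n}$ is carried out correctly in both parity cases. The only cosmetic slip is writing the index set as $\{1,\dots,n\}\cup\{0^{+},0^{-}\}$ rather than restricting to $\lambda\equiv n \pmod 2$, but your actual sums use the correct range, so nothing is affected.
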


Employing the result on ${\rm TLD}_n(\delta)$, one can obtain the cellular structure easily.
We omit the details and left it to the reader. Now we study the quasi-heredity of ${\rm TLD}_n(\delta)$.

\begin{prop}
Let $K$ be a field, $\delta\in K$ and ${\rm FTL}_n(\delta)$ a forked Temprley-Lieb algebra over $K$.
If $\delta\neq 0$, then ${\rm FTL}_n(\delta)$ is quasi-hereditary. If $\delta=0$, then
${\rm FTL}_n(\delta)$ is quasi-hereditary if and only if $n$ is odd.
\end{prop}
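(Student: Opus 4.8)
The plan is to transport the quasi-heredity criterion ``$\Phi_{\lam}\neq 0$ for every weight $\lam$'' (the one used already in the proof of Corollary~\ref{6.9}) from ${\rm TLD}_n(\delta)$ to its quotient. The first step is to pin down the cellular structure of ${\rm FTL}_n(\delta)$: by the discussion preceding the proposition the kernel of the surjection ${\rm TLD}_n(\delta)\twoheadrightarrow{\rm FTL}_n(\delta)$ is the span of all first-type diagrams, which in the notation of Section~5 is the cell ideal $A(\leq\dot{n-2})$. Since every dotted weight $\dot k$ lies below every other weight in $\Lambda$, passing to this quotient yields a cellular algebra whose poset $\Lambda'$ is $\Lambda$ with the dotted weights deleted --- so $\Lambda'=\{n>n-2>\cdots\}$, together with $0^{+},0^{-}$ when $n$ is even --- and whose surviving cell modules $S(n,p)$ and $S^{\pm}(2p,p)$ carry exactly the same bilinear forms $\Phi_{\lam}$ as before. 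This last point is where care is needed: one has to check that $A(\leq\dot{n-2})\subseteq A(<\lam)$ for every non-dotted $\lam$, so that the structure constants $\Phi(S,T)$ are unchanged in the quotient, and that no surviving cell module collapses to zero. With this done, ${\rm FTL}_n(\delta)$ is quasi-hereditary if and only if $\Phi_{\lam}\neq 0$ for all $\lam\in\Lambda'$.

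If $\delta\neq 0$ this is immediate from the proof of Corollary~\ref{6.9}: for each non-dotted $\lam$ that proof exhibits an explicit element (the identity for $\lam=n$, a product of some of the $e_i$ with $i$ odd for $\lam=n-2p$ and for $\lam=0^{+}$, and $e_{\bar 1}$ times such a product for $\lam=0^{-}$) whose square is a power of $\delta$ times itself, the scalar being nonzero when $\delta\neq 0$; none of these elements involves the product $e_{\bar 1}e_1$, so each descends to ${\rm FTL}_n(\delta)$ and still witnesses $\Phi_{\lam}\neq 0$. Hence ${\rm FTL}_n(\delta)$ is quasi-hereditary.

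Now take $\delta=0$. If $n$ is even, then $0^{+},0^{-}\in\Lambda'$; but every basis diagram of $S^{\pm}(n,n/2)$ is built from $n/2$ arcs and has no isolated dot, so for any two of them the graph $\Omega$ of Lemma~\ref{6.1} has all $n$ of its vertices of degree $2$ and is therefore a disjoint union of $c\geq 1$ circuits. Lemma~\ref{6.1} then gives the Gram entry $\delta^{c}=0$, so $\Phi_{0^{+}}=0$ and $\Phi_{0^{-}}=0$ as forms, and ${\rm FTL}_n(0)$ is not quasi-hereditary. If $n$ is odd, then $\Lambda'=\{n>n-2>\cdots>3>1\}$ contains no weight $0^{\pm}$, and it suffices to verify $\Phi_{\lam}\neq 0$ for $\lam=n,n-2,\dots,1$. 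For $\lam=n$ the set $M(n)$ has a single element and $\Phi(S,S)=1$. For $\lam=n-2p$ with $1\leq p\leq(n-1)/2$, I would take the undecorated $(n,p)$-parenthesis diagrams $\sigma_{1}$ with arcs $(1,2),(3,4),\dots,(2p-1,2p)$ and $\sigma_{2}$ with arcs $(2,3),(4,5),\dots,(2p,2p+1)$, both legitimate since $2p+1\leq n$. Their overlay $\Omega$ is the single path through the vertices $1,2,\dots,2p+1$ together with the isolated dots $2p+2,\dots,n$; it contains no circuit, and the number of non-closed curves plus the number of isolated dots is $1+(n-2p-1)=n-2p=\lam$. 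By Lemma~\ref{6.1} this forces $\Phi_{\lam}(\sigma_{1},\sigma_{2})=\delta^{0}=1\neq 0$, so $\Phi_{\lam}\neq 0$ for every $\lam\in\Lambda'$ and ${\rm FTL}_n(0)$ is quasi-hereditary.

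Putting the three cases together proves the proposition. The step I expect to be the main obstacle is the cellular reduction in the first paragraph --- making precise that ${\rm FTL}_n(\delta)={\rm TLD}_n(\delta)/A(\leq\dot{n-2})$ as cellular algebras and that the forms $\Phi_{\lam}$ on the surviving cell modules are literally unchanged. Everything after that is routine: the $\delta\neq 0$ case is a direct quotation of Corollary~\ref{6.9}, and the $\delta=0$ analysis is two short applications of Lemma~\ref{6.1}, one killing $\Phi_{0^{\pm}}$ when $n$ is even and one producing witnesses for $\Phi_{n-2p}\neq 0$ when $n$ is odd.
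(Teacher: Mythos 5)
Your proposal is correct and follows essentially the same route as the paper: the cellular structure of ${\rm FTL}_n(\delta)$ as the quotient of ${\rm TLD}_n(\delta)$ by the span of first-type diagrams (the dotted weights), the criterion ``quasi-hereditary $\iff$ $\Phi_{\lambda}\neq 0$ for all surviving $\lambda$'', and, for $n$ odd and $\delta=0$, the identical pair of interlocking undecorated diagrams $(1,2)(3,4)\cdots$ and $(2,3)(4,5)\cdots$ with $\Phi(S,T)=1$. The only cosmetic differences are that for $\delta\neq 0$ you invoke the elements from Corollary 6.9 while the paper reads off $\Phi_{\lambda}(S,S)=\delta^{p}\neq 0$ directly from Lemma 6.1, and your circuit argument for $\Phi_{0^{\pm}}=0$ when $n$ is even spells out what the paper calls ``clear''.
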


\begin{proof}
Let $\delta\neq 0$. For each $\lam\in\Lambda$, take a decorated parenthesis diagram $S\in M(\lam)$.
Then by Lemma \ref{6.1} $\Phi_{\lam}(S, S)\neq 0$ and consequently, ${\rm FTL}_n(\delta)$ is quasi-hereditary.

Let $\delta=0$. If $n$ is even, then clearly $\Phi_{0^+}=0$, $\Phi_{0^-}=0$ and
thus ${\rm FTL}_n(\delta)$ is not quasi-hereditary. If $n$ is odd, for cell modules $S(n, p)$,
take $S=(12)(34)\cdots(2p-1,2p)(2p+1)\cdots(n)$ and $T=(23)(45)\cdots(2p, 2p+1)(2p+2)\cdots(n)$.
Then it is easy to check $\Phi(S, T)=1$, that is, ${\rm FTL}_n(\delta)$ is quasi-hereditary.
\end{proof}

The following semi-simplicity criterion is a direct corollary of Theorem \ref{6.17} and we omit the proof.

\begin{prop}
Let $K$ be a field with $Char K\neq 2$, $0\neq\delta\in K$ and ${\rm FTL}_n(\delta)$ a forked Temprley-Lieb algebra over $K$.
For $n\geq 3$, ${\rm FTL}_n(\delta)$ is semi-simple if and only if $Q_t(\delta)\neq 0$, where $t\in \{2, \cdots, n\}$.
\end{prop}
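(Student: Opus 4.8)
The plan is to deduce the statement from Theorem~\ref{6.17} by realising ${\rm FTL}_n(\delta)$ as a cellular quotient of ${\rm TLD}_n(\delta)$ and then tracking which Gram determinants survive. Recall that, as observed above, the two-sided ideal of ${\rm TLD}_n(\delta)$ generated by $e_{\bar 1}e_1$ is exactly the span of the decorated diagrams of the first type in Lemma~\ref{3.3}, i.e. the span of $\{C^{\dot{k}}_{S,T}\mid k=0,\dots,n-2\}$. Since the classes $\dot{0},\dot{1},\dots,\dot{n-2}$ form the bottom of the poset $\Lambda$, this ideal is a cell ideal, so ${\rm FTL}_n(\delta)$ is cellular with poset $\Lambda\setminus\{\dot{0},\dots,\dot{n-2}\}$; its cell modules are precisely the modules $S(n,p)$ with $n-2p\ge 1$, together with $S^{+}(2p,p)$ and $S^{-}(2p,p)$ when $n=2p$, and for each of these the bilinear form $\Phi_{\lambda}$, hence the Gram matrix $G(\lambda)$, is the one computed in Section~6, unaltered by the passage to the quotient (reducing the cell relation of Definition~\ref{2.1}(C3) modulo a smaller cell ideal changes nothing). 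By Lemma~\ref{2.4} applied to ${\rm FTL}_n(\delta)$, semi-simplicity is therefore equivalent to $\det G(n,p)\neq 0$ for all $p$ with $n-2p\ge 1$ and, in case $n=2p$, to $\det G^{\pm}(2p,p)\neq 0$.

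For the ``if'' direction I would assume $Q_t(\delta)\neq 0$ for all $t$ with $2\le t\le n$ and invoke the formulas of Section~6. Lemma~\ref{6.3} gives $\det G(n,1)=Q_n(\delta)$; Proposition~\ref{6.16} expresses $\det G(n,p)$ (for $n\ge 2p$) as a ratio whose factors are powers of $\delta$ and values $Q_t(\delta)$ with $2\le t\le n$; and Lemma~\ref{6.5} together with Corollary~\ref{6.6} reduces $\det G^{\pm}(2p,p)$ to $\delta^{\frac12\binom{2p}{p}}\det G(2p-1,p-1)$, which is again of the same shape with indices $t\le 2p-1\le n$. Since $\delta\neq 0$ and every $Q_t(\delta)$ in the relevant range is nonzero, all these determinants are nonzero in $R$, so ${\rm FTL}_n(\delta)$ is semi-simple.

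For the converse, assuming ${\rm FTL}_n(\delta)$ semi-simple, I would use that $n\ge 3$ puts $S(n,1)$ among its cell modules, whence $\det G(n,1)=Q_n(\delta)\neq 0$ by Lemma~\ref{6.3}; applying Corollary~\ref{6.14} with $p=1$ repeatedly for $s=n,n-1,\dots,4$ writes $\det G(s,1)$ as $\det G(s-1,1)$ times $Q_s(\delta)/Q_{s-1}(\delta)$ (the remaining factor $\det G(s-1,0)$ being $1$), and since a product of field elements is nonzero only when each factor is, one descends to $\det G(s,1)=Q_s(\delta)\neq 0$ for all $3\le s\le n$. Together with $Q_2(\delta)=\delta^2\neq 0$ this yields $Q_t(\delta)\neq 0$ for $2\le t\le n$, completing the equivalence. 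The step I expect to be the main obstacle is the bookkeeping in the ``if'' direction: one must check that, for every second-type cell module, the closed form of its Gram determinant involves no type~A Chebychev polynomial $P_s$ and only Chebychev polynomials $Q_t$ of type~D with $t$ in the asserted range; once that is confirmed, the proposition follows at once from Theorem~\ref{6.17} and the cellular structure of ${\rm FTL}_n(\delta)$.
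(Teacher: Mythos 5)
Your overall strategy is the intended one (the paper omits the proof, calling it a direct corollary of Theorem~\ref{6.17}): the dotted labels sit at the bottom of $\Lambda$, the ideal generated by $e_{\bar 1}e_1$ is the span of the first-type diagrams, so ${\rm FTL}_n(\delta)$ is cellular with cell modules $S(n,p)$ ($n-2p\ge 1$) and $S^{\pm}(2p,p)$ for $n=2p$, with unchanged Gram matrices; and your ``if'' direction is sound, since by Lemma~\ref{6.3}, Proposition~\ref{6.16}, Lemma~\ref{6.5} and Corollary~\ref{6.6} all indices occurring are at most $n$ (with the convention $Q_1=1$ implicit in Proposition~\ref{6.16}) and no denominator vanishes under your hypothesis, so the specialization of the closed formulas is legitimate.

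The genuine gap is in your converse. Semi-simplicity of ${\rm FTL}_n(\delta)$ gives nonvanishing only of the Gram determinants of \emph{its} cell modules; $G(s,1)$ for $s<n$ is not among them, and your descent tries to manufacture $\det G(s,1)\neq 0$ from $\det G(n,1)=Q_n(\delta)\neq 0$ alone. That step is circular and false: writing $\det G(s,1)=\det G(s-1,1)\cdot Q_s(\delta)/Q_{s-1}(\delta)$ presupposes $Q_{s-1}(\delta)\neq 0$, which is exactly what you are trying to prove (Corollary~\ref{6.14} is an identity over the field of fractions $F$ in an indeterminate $\delta$, and its factors cannot be specialized at a zero of a denominator), and the implication $Q_s(\delta)\neq 0\Rightarrow Q_{s-1}(\delta)\neq 0$ simply fails. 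Concretely, take $K=\mathbb{R}$, $\delta=\sqrt{3}$, $n=5$: then $Q_4(\delta)=\delta^2(\delta^2-3)=0$ while $Q_5(\delta)=\delta(\delta^4-4\delta^2+2)=-\delta\neq 0$, so $\det G(5,1)\neq 0$ and your argument would ``conclude'' $Q_4(\delta)\neq 0$; yet ${\rm FTL}_5(\sqrt 3)$ is indeed not semi-simple, because the failure is detected by a different cell module, $\det G(5,2)=\delta^{6}(\delta^2-2)^{6}(\delta^2-3)=0$. The correct converse must use the whole family: by Lemma~\ref{2.4}, $\det G(n,p)\neq 0$ for every $p$ with $n-2p\ge 1$ and $\det G^{\pm}(2p,p)\neq 0$ when $n=2p$, and by Corollary~\ref{6.14}/Proposition~\ref{6.16} (together with Lemma~\ref{6.5} and Corollary~\ref{6.6} for the $0^{\pm}$ labels) the polynomials $Q_{n-2p+2},\dots,Q_{n-p+1}$ occur, up to powers of $\delta$, as factors of $\det G(n,p)$; letting $p$ vary sweeps out every index $t$ with $3\le t\le n$, and $Q_2=\delta^2\neq 0$ is the hypothesis. (The same reading is needed to make the corresponding step in the proof of Theorem~\ref{6.17} precise; as literally written there, and in your proposal, the $p=1$ descent does not suffice.)
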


\begin{remark}
In \cite{LS}, Lejczyk and Stroppel introduced a quotient algebra $\widehat{TL}(\mathcal{W})$ of ${\rm TLD}_n(\delta)$.
In case of $n$ being odd, $\widehat{TL}(\mathcal{W})$ is in fact ${\rm FTL}_n(\delta)$. In case of $n$ being even,
$\widehat{TL}(\mathcal{W})$ is a quotient algebra of ${\rm FTL}_n(\delta)$.
\end{remark}

\bigskip\bigskip

\noindent{\bf Acknowledgement} The authors would like to express their
sincere thanks to the
anonymous referee for her/his numerous helpful comments and corrections.
The authors are grateful to Professor Shoumin Liu and Doctor Pei Wang for some discussions about this topic.
Part of this work was done when Li visited School of Mathematics Sciences at Hebei Normal University in the summer of 2019.
He takes this opportunity to
express his sincere thanks to the School of Mathematics Sciences for the hospitality during his visit.

\bigskip\bigskip\bigskip


\begin{thebibliography}{}

\bibitem{A1} S. Abramsky, {\em Temperley-Lieb algebra: from knot theory to logic and computation via quantum mechanics}, arXiv: 0910.2737.

\bibitem{A}	H. H. Andersen, {\em Simple modules for Temperley-Lieb algebras and related algebras}, J. Algebra {\bf 520} (2019), 276-308.

\bibitem{BK} M. T. Batchelor and A. Kuniba, {\em Temperley-Lieb lattice models
arising from quantum group}, J. Phys. A: Math. Gen. {\bf 24} (1991), 2599-2614.

\bibitem{BH} G. Benkarta T. Halverson, {\em Motzkin algebras}, Eur. J. Comb.
{\bf 36} (2014), 473-502.

\bibitem{CPS} E. Cline, B. Parshall and L. Scott, {\em Finite dimensional
algebras and highest weight categories}, J. Reine Angew. Math. {\bf 391} (1988), 85-99.

\bibitem{D} T. Dieck, {\em Bridges with pillars: a graphical calculus of knot algebra}, Topology Appl. {\bf 78} (1997), 21-38.

\bibitem{ES} M. Ehrig and C. Stroppel, {\em2-row Springer fibres and Khovanov diagram algebras for Type D}, Can. J. Math. {\bf 68} (2016), 1285-1333.

\bibitem{E} J. Enyang, {\em Representations of Temperley--Lieb Algebras}, arXiv: 0710.3218.

\bibitem{F1} C. K. Fan,  A Hecke algebra quotient and properties of commutative elements of a Weyl
group, Ph.D. Thesis, MIT, 1995.

\bibitem{F}	C. K. Fan, {\em Structure of a Hecke algebra quotient}, J. Amer. Math. Soc. {\bf 10} (1997), 139-167.

\bibitem{G}	M. Geck, {\em Hecke algebras of finite type are cellular}, Invent. Math. {\bf 169} (2007), 501-517.

\bibitem{Go} F. Goodman and H. Wenzl, {\em The Temperley-Lieb algebra at roots of unity}, Pac. J. Math. {\bf 161} (1993), 307-334.

\bibitem{G1} J. J. Graham, Modular representations of Hecke algebras and related algebras,
Ph.D. Thesis, University of Sydney, 1995.

\bibitem{GL} J. J. Graham and G. I. Lehrer, {\em Cellular algebras}, Invent. Math. {\bf 34} (1996), 1-34.

\bibitem{G2} R. M. Green, {\em Generalized Temperley-Lieb algebras and decorated tangles},
J. Knot Theory Ram. {\bf 7} (1998), 155-171.

\bibitem{G3} P. Grossman, {\em Forked Temperley-Lieb algebras and intermediate subfactors},
J. Funct. Anal. {\bf 247}  (2007), 477-491.

\bibitem{H} J. Hu, {\em A Morita equivalence theorem for Hecke algebra $H_q(D_n)$ when $n$ is even}, Manuscripta Math. {\bf 108} (2002), 409-430.

\bibitem{J}	V. F. R. Jones, {\em Index for subfactors}, Invent. Math. {\bf 72} (1983), 1-25.

\bibitem{J2} V. F. R. Jones, {\em A polynomial invariant for knots via von Neumann algebras},
Bull Amer. Math. Soc. {\bf 12} (1985), 103-111.

\bibitem{J3} V. F. R. Jones, {\em Subfactors and knots}, C.B.M.S. {\bf 80} Amer. Math. Soc. Providence RI 1991.

\bibitem{K} L. Kauffman, {\em An invariant of regular isotopy}, Trans. Amer. Math. Soc. {\bf 318} (1990), 417-471.

\bibitem{LS} T. Lejczyk, C. Stroppel, {\em A graphical description of $(D_n, A_{n-1})$ Kazhdan-Lusztig polynomials},
Glasg. Math. J. {\bf 55} (2013), 313-340.

\bibitem{L2} J. Losonczy {\em The Kazhdan-Lusztig basis and the Temperley-Lieb quotient in type D}, J. Algebra {\bf 233} (2000), 1-15.

\bibitem{M}	P. P. Martin, {\em Temperley-Lieb algebras for non-planar statistical mechanics-The partition algebra construction}, J. Knot Theory Ram. {\bf 3} (1994), 51-82.

\bibitem{M2} P. P. Martin, Potts models and related problems in statistical mechanics, World Scientific, Singapore. (1991).

\bibitem{P} C. Pallikaros, {\em Representations of Hecke algebras of type $D_n$}, J. Algebra {\bf 169} (1994), 20-48.

\bibitem{RS} D. Ridout and Y. Saint-Aubin, {\em Standard modules, induction and the
structure of the Temperley-Lieb algebra}, Adv. Theor. Math. Phys. {\bf 18} (2012), 957-1041.

\bibitem{RX}	H. B. Rui and C. C. Xi, {\em The representation theory of cyclotomic Temperley-Lieb algebra},
Comment. Math. Helv. {\bf 79} (2004), 427-450.

\bibitem{RXY}	H. B. Rui,  C. C. Xi and W. H. Yu, {\em On the semi-simplicity of the cyclotomic
Temperley-Lieb algebras}, Michigan Math. J. {\bf 53} (2005), 83-96.

\bibitem{S}	C. Stroppel, {\em Categorification of the Temperley-Lieb category, tangles, and
cobordisms via projective functors}, Duke Math. J. {\bf 126} (2005), 547-596.

\bibitem{SW} C. Stroppel and A. Wilbert, {\em Two-block Springer fibers of types C and D: a diagrammatic approach
to Springer theory}, Math. Z. {\bf 292} (2019), 1387-1430.

\bibitem{TL} H. Temperley and E. Lieb, {\em Relations between percolation
and colouring problems and other graph theoretical problems associated
with regular planar lattices: some exact results for the percolation problem},
Proc. Roy. Soc. London  {\bf 322} (1971), 251-273.

\bibitem{W}	P. Wang, {\em The Grothendieck group of a tower of the Temperley-Lieb algebras}, J. Algeba. App. {\bf 18} (2019), 1950136.

\bibitem{W2} B. W. Westbury, {\em The representation theory of the Temperley-Lieb Algebras},
Math. Z. {\bf 219} (1995), 539-565.

\bibitem{X}	C. C. Xi, {\em Partition algebras are cellular}, Compos. Math. {\bf 119} (1999), 99-109.

\bibitem{X2} C. C. Xi, {\em On the quasi-heredity of Birman-Wenzl algebras}, Adv. Math. {\bf 154} (2000),
280-298.

\end{thebibliography}
\end{document}